\def\@secnumfont{\bfseries\scshape}
\def\section{\@startsection{section}{1}%
  \z@{.7\linespacing\@plus\linespacing}{.5\linespacing}%
  {\normalfont\large\bfseries\scshape\centering}}
\def\subsection{\@startsection{subsection}{2}%
  \z@{.5\linespacing\@plus.7\linespacing}{-.5em}%
  {\normalfont\bfseries\scshape}}
\def\subsubsection{\@startsection{subsubsection}{3}%
  \z@{.5\linespacing\@plus.7\linespacing}{-.5em}%
  {\normalfont\scshape}}
\def\specialsection{\@startsection{section}{1}%
  \z@{\linespacing\@plus\linespacing}{.5\linespacing}%
  {\normalfont\centering\large\bfseries\scshape}}
\renewenvironment{proof}[1][\proofname]{\par
\pushQED{\qed}%
\normalfont \topsep4\p@\@plus4\p@\relax
\trivlist
\item[\hskip\labelsep
\bfseries
#1\@addpunct{.}]\ignorespaces
}{%
\popQED\endtrivlist\@endpefalse
}
\newcommand \Dotfill {\leavevmode \leaders \hb@xt@ 6pt{\hss .\hss }\hfill \kern \z@}
\def\@tocline#1#2#3#4#5#6#7{\relax
  \ifnum #1>\c@tocdepth 
  \else
    \par \addpenalty\@secpenalty\addvspace{#2}%
    \begingroup \hyphenpenalty\@M
    \@ifempty{#4}{%
      \@tempdima\csname r@tocindent\number#1\endcsname\relax
    }{%
      \@tempdima#4\relax
    }%
    \parindent\z@ \leftskip#3\relax \advance\leftskip\@tempdima\relax
    \rightskip\@pnumwidth plus4em \parfillskip-\@pnumwidth
    #5\leavevmode\hskip-\@tempdima
      \ifcase #1
       \or\or \hskip 1.65em \or \hskip 3.3em \else \hskip 4.95em \fi%
      #6\nobreak\relax
    \Dotfill
    \hbox to\@pnumwidth{\@tocpagenum{#7}}\par
    \nobreak
    \endgroup
  \fi}
\def\l@section{\@tocline{1}{0pt}{1pc}{}{\scshape}}
\renewcommand{\tocsection}[3]{%
\indentlabel{\@ifnotempty{#2}{\ignorespaces#1 #2.\hskip 0.7em}}#3}
\def\l@subsection{\@tocline{2}{0pt}{1pc}{5pc}{}}
\def\l@subsubsection{\@tocline{3}{0pt}{1pc}{7pc}{}}
\numberwithin{equation}{section}
\newtheoremstyle{mytheorem}{.7\linespacing\@plus.3\linespacing}{.7\linespacing\@plus.3\linespacing}%
     {\itshape}
     {}
     {\bfseries}
     {. }
     {0.3ex}
     {\thmname{{\bfseries #1}}\thmnumber{ {\bfseries #2}}\thmnote{ (#3)}}  
\theoremstyle{mytheorem}
\newtheorem{theorem}{Theorem}[section]
\newtheorem{lemma}[theorem]{Lemma}
\newtheorem{proposition}[theorem]{Proposition}
\newtheorem{remark}[theorem]{Remark}
\newtheorem{definition}[theorem]{Definition}
\newtheorem{conjecture}[theorem]{Conjecture}
\newcommand{\bbE}{{\ensuremath{\mathbb E}} }
\newcommand{\bbP}{{\ensuremath{\mathbb P}} }
\newcommand{\bbR}{{\ensuremath{\mathbb R}} }
\newcommand{\bbT}{{\ensuremath{\mathbb T}} }
\newcommand{\cE}{{\ensuremath{\mathcal E}} }
\newcommand{\ga}{\alpha}
\newcommand{\gd}{\delta}
\newcommand{\gl}{\lambda}
\renewcommand{\tilde}{\widetilde}          
\DeclareMathSymbol{\leqslant}{\mathalpha}{AMSa}{"36} 
\DeclareMathSymbol{\geqslant}{\mathalpha}{AMSa}{"3E} 
\DeclareMathSymbol{\eset}{\mathalpha}{AMSb}{"3F}     
\newcommand{\dd}{\text{\rm d}}             
\newcommand{\R}{\mathbb{R}}
\newcommand{\C}{\mathbb{C}}
\newcommand{\PEfont}{\mathrm}
\newcommand{\p}{\ensuremath{\PEfont P}}
\newcommand{\e}{\ensuremath{\PEfont E}}
\newcommand{\E}{\e}
\renewcommand{\P}{\p}
\newcommand{\ind}{\mathds{1}}
\renewcommand{\epsilon}{\varepsilon}
\renewcommand{\theta}{\vartheta}
\renewcommand{\rho}{\varrho}
\newenvironment{myenumerate}{%
\renewcommand{\theenumi}{\arabic{enumi}}%
\renewcommand{\labelenumi}{{\rm(\theenumi)}}%
\begin{list}{\labelenumi}
	{%
	\setlength{\itemsep}{0.4em}%
	\setlength{\topsep}{0.5em}%
	\setlength\leftmargin{2.45em}%
	\setlength\labelwidth{2.05em}%
	\setlength{\labelsep}{0.4em}%
	\usecounter{enumi}%
	}%
	}%
{\end{list}
}
\newenvironment{myitemize}{%
\begin{list}{$\bullet$}%
 	{%
	\setlength{\itemsep}{0.4em}%
	\setlength{\topsep}{0.5em}%
	\setlength\leftmargin{2.45em}%
	\setlength\labelwidth{2.05em}%
	\setlength{\labelsep}{0.4em}%
	}%
	}%
{\end{list}}
\renewenvironment{itemize}{
\begin{myitemize}}%
{\end{myitemize}}
\newcommand{\Ai}{\mathcal{A}i}
\newcommand{\grsk}{\text{gRSK}}
\newcommand{\gpng}{\text{gPNG}}
\newcommand{\Whitt}{
\Psi^{ \,
\begin{tikzpicture}[scale=.04]
\draw[thick] (1,1)--(1,6);
\draw[thick] (1,6)--(4,6);
\draw[thick] (4,6)--(6,3);
\draw[thick] (6,3)--(4,3);
\draw[thick] (4,3)--(4,1);
\draw[thick] (4,1)--(1,1);
\end{tikzpicture}
 }}
\newcommand{\polygon}{
\begin{tikzpicture}[scale=.03]
\draw[thick] (0,0)--(0,9);
\draw[thick] (0,9)--(9,9);
\draw[thick] (9,9)--(9,6);
\draw[thick] (9,6)--(6,6);
\draw[thick] (6,6)--(6,3);
\draw[thick] (6,3)--(3,3);
\draw[thick] (3,3)--(3,0);
\draw[thick] (3,0)--(0,0);
\end{tikzpicture}
 }
 \newcommand{\Ypolygon}{
\begin{tikzpicture}[scale=.15]
\draw[help lines] (1,0) grid (4,2);
\draw[help lines] (1,2) grid (6,4);
\draw[help lines] (1,4) grid (10,7);

\end{tikzpicture}
}
\newcommand{\Ppattern}{
\begin{tikzpicture}[scale=.15]
\node at (0.2,0) {\small{{\bf $z_{11}$}}};
\node at (2,-2.8) {\small{{\bf $z_{21}$}}};
\node at (-2,-2.8) {\small{{\bf $z_{22}$}}};
\draw[thick][dotted] (-3,-4.5)--(-4,-6);
\draw[thick][dotted] (3,-4.5)--(4,-6);
\node at (4.8,-7) {\small{{\bf $z_{n1}$}}};
\node at (-4.5,-7) {\small{{\bf $z_{nn}$}}};
\end{tikzpicture}
}
\newcommand{\Qpattern}{
\begin{tikzpicture}[scale=.15]
\node at (0.2,0) {\small{{\bf $z'_{11}$}}};
\node at (2,-2.8) {\small{{\bf $z'_{21}$}}};
\node at (-2,-2.8) {\small{{\bf $z'_{22}$}}};
\draw[thick][dotted] (-3,-4.5)--(-4,-6);
\draw[thick][dotted] (3,-4.5)--(4,-6);
\node at (4.8,-7) {\small{{\bf $z'_{n1}$}}};
\node at (-4.5,-7) {\small{{\bf $z'_{nn}$}}};
\end{tikzpicture}
}
\def\dd{\mathrm{d}}
\def\sfA{\mathsf A}
\def\sfC{\mathsf C}
\def\sfT{\mathsf T}
\def\sfv{\mathsf v}
\def\sfw{\mathsf w}
\def\sff{\mathsf f}
\def\sfF{\mathsf F}
\def\sfi{\mathsf i}
\newcommand\btau{\boldsymbol\tau}
\newcommand\bZ{\boldsymbol Z}
\newcommand\bx{\boldsymbol x}
\renewcommand{\ind}{\mathbb{1}}
\begin{document}

\title
[variants of gRSK, gPNG and log-gamma polymer]
{Variants of geometric RSK, geometric PNG and the multipoint distribution of the log-gamma polymer}

\begin{abstract} We show that the reformulation of the geometric Robinson-Schensted-Knuth (gRSK) correspondence via local moves,  introduced in \cite{OSZ14} can be extended to cases where the input matrix is replaced by more general polygonal, Young-diagram-like, arrays of the form $\polygon$. We also show that a rearrangement of the sequence of the local moves gives rise to a geometric version of the polynuclear growth model (PNG). These reformulations are used to obtain integral formulae for the Laplace transform of the joint distribution of the point-to-point partition functions of the log-gamma polymer at different space-time points. In the case of two points at equal time $N$ and space at distance of order $N^{2/3}$, we show formally that the joint law of the partition functions, scaled by $N^{1/3}$, converges to the two-point function of the Airy process. 
\end{abstract}

\author[V.L. Nguyen]{Vu-Lan Nguyen }
\address{Departement de Mathematique\\Paris 7, Diderot\\
 }
\email{vlnguyen@math.univ-paris-diderot.fr}

\author[N.Zygouras]{Nikos Zygouras}
\address{Department of Statistics\\
University of Warwick\\
Coventry CV4 7AL, UK}
\email{N.Zygouras@warwick.ac.uk}

\date{\today}

\keywords{log-gamma polymer, geometric Robinson-Schensted-Knuth correspondence, geometric polynuclear growth model, Whittaker functions, Airy process}
\subjclass{Primary: 82B44}

\maketitle
\section{Introduction}
In recent years there have been several important developments in the study of random polymer models, interacting particle systems and related statistical mechanics models, which appear to have an exactly solvable underlying structure. The object that has been driving these investigations is the Kardar-Parisi-Zhang (KPZ) equation and the universality class that this determines. We will not embark on an interlude of the ubiquitous nature of the KPZ, but instead we will refer to various reviews including 
\cite{KS92}, \cite{C12}. Even though non gaussian fluctuations, governed by a $1/3$ exponent, were predicted in \cite{KPZ86} for the systems belonging in this class, the more profound structure begun to be recognised much later. Fundamental towards this understanding were the contributions of Baik-Deift-Johansson \cite{BDJ99} and Johansson \cite{J00} where the combinatorial structure of certain objects in the KPZ universality was studied and surprising connections to random matrix theory were revealed. In particular, \cite{BDJ99} studied the distribution of the longest increasing subsequence in a random permutation and \cite{J00} studied the distribution of a directed last passage percolation with geometric disorder and in both cases the exponent $1/3$ for the fluctuations  and their convergence to the Tracy-Widom GUE distribution was established.
 \vskip 2mm
The two models studied in \cite{BDJ99}, \cite{J00} are fundamentally related to each other via the Robinson-Schensted-Knuth (RSK) correspondence. The special (and historically prior) application of this correspondence, the Robsinson-Schensted (RS), is a bijective mapping between a permutation of integers to a pair of Young tableaux of same {\it shape} (one which is {\it standard} and one {\it semi-standard}), in which the length of the first row of each tableau equals the length of the longest increasing subsequence in the permutation and more generally the sum of the first $k$ rows equals the maximal length of the union of $k$ disjoint increasing subsequences.
 The RS correspondence was extended by Knuth as a bijection between matrices with nonnegative
 integer entries and pairs of Young tableaux. If $(w_{ij})_{1\leq i \leq m, \,1\leq j \leq n}$ is an input matrix, then the length of the first row of the Young tableaux equals the passage time in directed last passage percolation 
 given by
 \begin{align}\label{def:LPP}
 \tau_{m,n}:=\max_{\pi \in \Pi_{m,n}}\sum_{(i,j)\in \pi} w_{ij},
 \end{align}
 where $\Pi_{m,n}$ is the set of down-right paths from $(1,1)$ to $(m,n)$.
 \vskip 2mm
 Combined with the works of Okounkov \cite{O01} and Pr\"ahofer-Spohn \cite{PS02}, which made connections to integrable systems and the 
 fermionic formalism,  many statistical models of a random growth nature where analysed and their asymptotic fluctuations where established, see \cite{J05} for an account. However, these developments were restricted to {\it zero temperature} models, even though the universal KPZ/Tracy-Widom fluctuations were also expected for
 such models in positive temperature. 
 In \cite{O12},\cite{COSZ14}, \cite{OSZ14} it was realised that the geometric lifting of the RSK correspondence (gRSK)
 introduced by Kirillov \cite{K01} (who had originally named it ``Tropical RSK''-see also \cite{NY04} for a matrix approach) could be used as the stepping stone to make progress in the analysis of
 fluctuations of statistical models in positive temperature. At the same time the theory of Macdonald Processes of Borodin-Corwin \cite{BC14}, based on the theory of Macdonald functions \cite{Mac99}, provided another route to this task analogous to the theory of Schur processes of Okounkov-Reshetikhin \cite{OR03}.
 \vskip 2mm
In this article we work within and further elaborate on the framework of gRSK with the purpose of establishing extensions of this correspondence and 
draw conclusions on the correlation properties of a directed polymer model in random medium (DPRM). The DPRM is the positive temperature version of last passage percolation \eqref{def:LPP} and its partition function is given by
 \begin{align}\label{def:DPRM}
 Z_{m,n}:=\sum_{\pi \in \Pi_{m,n}}\prod_{(i,j)\in \pi} w_{ij},
 \end{align}
 where again $\Pi_{m,n}$ is the set of down-right paths from $(1,1)$ to $(m,n)$. A distribution of the random variables $(w_{ij})$ that leads to an exactly solvable partition \eqref{def:DPRM}
 is the inverse gamma distribution: the variables $w_{ij}$ are considered to be independent and the distribution of each one 
 is given by $\Gamma(\ga_i+\hat\ga_j)^{-1} w_{ij}^{-\ga_i-\hat\ga_j}\exp(-1/w_{ij}) \,\dd w_{ij}/w_{ij}$. A DPRM with such an underlying randomness is now known as the 
 {\it log-gamma} polymer. It was introduced by Sepp\"al\"ainen in \cite{S12} who, using ideas from particle systems, was able to obtain fluctuation bounds of the 
 correct order $N^{1/3}$ for $\log Z_{N,N}$. In \cite{COSZ14}, \cite{OSZ14} an exact contour integral formula for the Laplace transform of $Z_{n,m}$ was obtained
via the use of gRSK. This formula was later turned into a Fredholm determinant  in \cite{BCR13} from which it was deduced that for certain constants $c_1,c_2$
\[
\frac{\log Z_{N,N}-c_1N}{c_2 N^{1/3}}\xrightarrow[N\to\infty]{(d)} F_{GUE},
\]
where $F_{GUE}$ is the Tracy-Widom GUE distribution characterising the asymptotic fluctuations of the largest eigenvalue of a Gaussian Unitary Ensemble of random matrices. 
\vskip 2mm
Besides the above result, which corresponds to the one-point distribution of a polymer starting at $(1,1)$ and conditioned to end at $(N,N)$, one is also interested to know the joint distribution of two or more partition functions of polymers starting at the same point $(1,1)$ and ending at different locations $(N_1,M_1),(N_2,M_2),$... A particular case is when the ending points lie on the same (one dimensional) plane $\{(m,n)\colon m+n=N\}$. Using the space-time coordinate system $(x,t)=(i-j, i+j)$, which is the more natural system for one-dimensional simple random walks, the point-to-point partition functions $\big(Z_{m,n} \colon m+n=N\,\big)$ correspond to random walks starting at zero and ending up at different points after time $N$. Thus the joint distribution of these partition functions is known as the equal-time-distribution. In the case of last passage percolation the joint law was obtained in \cite{J03} via a discrete version of the polynuclear growth model (PNG) of Pr\"ahofer and Spohn \cite{PS02}. PNG can be actually viewed as a different construction of the RSK correspondence and its geometric lifting will be one of the objectives of the present article. It can also be viewed as an ensemble of non intersecting paths, which 
after appropriate space-time scaling converges to a continuum non intersecting line ensemble called the {\it Airy line ensemble }.
In this paper we will concern ourselves with the topmost line in this ensemble, which is known as the {\it Airy process}. The Airy process, denoted by
$\Ai(\cdot)$
 is a continuous, stationary process whose one dimensional distribution is the Tracy-Widom GUE. In \cite{J03} it was established that
when the weights $(w_{ij})$ have a geometric distribution, i.e. $\bbP(w_{ij}=m)=(1-q)q^m, m\geq 0$ then 
\begin{align}\label{2/3-LPP}
\Bigg\{
\frac{\tau_{N+c_0 N^{2/3}\,x, N-c_0 N^{2/3}\,x}-c_1N}{c_2N^{1/3}}
\Bigg\}_{x\in\bbR} \xrightarrow[N\to\infty]{(d)} \big\{ \Ai(x)-x^2\big\}_{x\in\bbR},
\end{align} 
for appropriate constants $c_0,c_1,c_2$ depending on the parameter $q$.
 \vskip 2mm
The analogue result for the joint distribution of the partition functions $Z_{m,n}$ of the log-gamma polymer had not been achieved, so far, the reason being that no 
formula for the joint Laplace transform had been obtained. In this work we obtain such formulae. We do this by establishing two variants of the geometric RSK correspondence, which we think are of independent interest:
\begin{itemize}
\item[1.] The first one is an extension of gRSK in situations where the input array is not a matrix, but rather a polygonal shape of the form $\polygon$. This is done in Section \ref{sec:gRSKpoly} by extending the reformulation of gRSK in terms of local moves introduced in \cite{OSZ14}. 
\item[2.]
The second one is the geometric lifting of PNG, which we will refer to as gPNG. gPNG appears to put the local moves of \cite{OSZ14} under an intuitive and transparent perspective, which may be useful in other situations, as well. The construction of gPNG and its properties are described in Section \ref{sec:gPNG}.
\end{itemize}
Each construction leads to seemingly different integral formulae, cf. Theorems \ref{thm:k-point} and \ref{Gamma_meas}, for the joint Laplace transform of an arbitrary number of log-gamma partition functions corresponding to different space-time points. These formulae have a structure that is reminiscent of Givental's integral formula for the $GL_n(\bbR)-$Whittaker function, but they are nevertheless different (we do not attempt to directly check that they indeed coincide). We also point that the formulae are valid for arbitrary points, not necessarily at equal time (but some restrictions exist, that exclude the {\it strict two-time} points, 
cf. the conditions on $(m_1,n_1)$ and $(m_2,n_2)$
in Theorem \ref{thm:2-point}; for progress on the two-time distribution in a last percolation model see
\cite{J15} and for a physics approach in the case of the continuum polymer in \cite{D13b}). Using the Plancherel theorem for $GL_n(\bbR)$-Whittaker functions, we are able to rewrite the
 formula for the Laplace transform of two partition functions at equal time as a contour integral, cf. Theorem \ref{thm:2-point}. 
 \vskip 2mm
 To put things into context let us show the formula for the joint Laplace transform of $Z_{(m_1,n_1)}, Z_{(m_2,n_2)}$, which in the case $m_1\leq n_1, m_2\geq n_2$  
 writes as
\begin{align}\label{eq:2p_intro}
 \bbE\Big[e^{-u_1 Z_{(m_1,n_1)}-u_2Z_{(m_2,n_2)}}\Big] 
 = &\int_{(\ell_\delta)^{m_1} } \dd \lambda
    \,\,s_{m_1}(\lambda)  \prod_{1\leq i,i'\leq m_1} \Gamma(-\ga_i+\gl_{i'})
     \prod_{i=1}^{m_1}\frac{u_1^{-\gl_i}\, \prod_{j=n_2+1}^{n_1}\Gamma(\gl_i+\hat\ga_j) }{ u_1^{-\ga_i}\, \prod_{j=n_2+1}^{n_1}\Gamma(\ga_i+\hat\ga_j) } \notag\\
&\,\times   \int_{(\ell_{\delta+\gamma})^{n_2} } \dd \mu  
    \,\,s_{n_2}(\mu)  \prod_{1\leq j,j'\leq n_2} \Gamma(-\hat\ga_j+\mu_{j'}) 
    \prod_{j=1}^{n_2}\frac{u_2^{-\mu_j} \, \prod_{i=m_1+1}^{m_2}\Gamma(\mu_j+\ga_i)  }{u_2^{-{\hat\ga_j}} \,  \prod_{i=m_1+1}^{m_2}\Gamma(\hat\ga_j+\ga_i) } \notag\\
 &\,\,   \quad\times \prod_{\substack{1\leq i\leq m_1 \\ 1\leq j\leq n_2 }} \frac{\Gamma(\gl_i+\mu_j)}{\Gamma(\ga_i+\hat\ga_j)}\,,
  \end{align} 
  where $\ell_\delta, \ell_{\delta+\gamma}$ are the vertical lines $\delta+\iota \bbR$ and $\delta+\gamma+\iota \bbR$ in the complex plane, with $\delta,\gamma>0$ such that $\ga_i<\gd$ for all $i$ (shortly written as $\ga<\gd$) and $\hat\ga_j<\gd+\gamma$ for all $j$ (shortly written as $\hat\ga<\gd+\gamma$), and $s_n(\lambda)=\prod_{1\leq i\neq j \leq n} \Gamma(\gl_i-\gl_j)^{-1}$ is the so-called Sklyanin measure.
  Unless otherwise stated we will keep the convention that integrals along vertical lines in the complex plane are traced upwards.
  \vskip 2mm
 Those familiar with the formula for the Laplace transform obtained for a single partition function in \cite{COSZ14}, \cite{OSZ14} will observe certain similarities to the
 above formula. In fact, if in the last product of gamma functions, $\prod \Gamma(\gl_i+\mu_j)/\Gamma(\ga_i+\hat\ga_j)$, one replaced the $\mu_j$ variables 
 with $\hat\ga_j$ and combined with the $\dd \gl$ integral, he/she would obtain the formula for the Laplace transform of $Z_{(m_1,n_1)}$. Similarly, if one replaced the $\gl_i$
 variables with $\ga_i$ and then combined with the $\dd \mu$ integral, he/she would obtain the Laplace transform for $Z_{(m_2,n_2)}$. 
 Bearing also in mind that two polymers, one from $(1,1)$ to $(m_1,n_1)$ and another from $(1,1)$ to $(m_2,n_2)$
  can only interact (via intersection) within the rectangle $1\leq i\leq m_1, 1\leq j\leq n_2$ (if $m_1\leq m_2$ and $n_2\leq n_1$) ,
   we are lead to see that this cross product reflects in a sense the
 correlation of the two partition functions.  
 \vskip 2mm
  Having this formula at hand, we work in Section \ref{sec:Airy} 
  towards obtaining convergence of the joint law of two partition functions to the two-point function of the Airy process, in the
  same spirit as \eqref{2/3-LPP}. Our first attempt was to write the above formula as a Fredholm determinant. This, however, is a non-trivial task and certainly demands further investigation, in order to understand the (what we believe that exists) underlying determinantal structure. We therefore followed an alternative route, which consisted in expressing
  \eqref{eq:2p_intro} as Fredholm-like series. Further nontrivial manipulation of the latter expression reveals a block determinantal structure that points towards the
  Airy process. Even though, at this stage we can pass in the asymptotic limit on each individual term of the series and identify it with the corresponding terms in the Airy
  expansion, a cross term arising from the product $\prod \Gamma(\gl_i+\mu_j)/\Gamma(\ga_i+\hat\ga_j)$ in \eqref{eq:2p_intro} makes it difficult to justify the uniformity of this limiting procedure. We discuss this point a bit more in Section \ref{sec:road}. For this reason our derivation of the convergence of the joint distribution to the two-point function of the Airy process is formal
   (cf. Conjecture \ref{thm:claim}, Section \ref{sec:Airy}). Nevertheless, we believe that our analysis unravels 
   an interesting structure of the contour integral formulae.
  \vskip 2mm 
Our exact formulae for the joint law of the log-gamma polymer appear to be the first such mathematically rigorous derivation for positive temperature polymers. 
We should, though, mention that in the physics literature formulae have been provided for the joint law of the continuum directed polymer in relation to the delta-Bose gas
 with the use of the coordinate Bethe ansatz. That was first done by Prohlac and Spohn \cite{PS11a}, \cite{PS11b}
 and then, again by the use of Bethe ansatz but via different manipulations
of the formulae, by Dotsenko \cite{D13}, \cite{D14}. Although for the same object, it was not obvious that the formulae provided were coinciding.
 This was later checked by Imamura-Sasamoto-Spohn in \cite{ISS13}. 
 Our method is completely different to these approaches, since we investigated, instead, the combinatorial structure of the log-gamma polymer.
Naturally, the formulae we obtain are different but it is worth noting that cross terms consisting of products of various gamma functions appear in all situations. 
Mathematically rigorous formulae for joint moments for related particle models, q-Push(T)ASEP's, have been obtained, see for example
\cite{CP15}. In principle these can degenerate to formulae for joint moments of polymer partition functions $Z_{(m,n)}$ for various points $(m,n)$ which would,
 however, need to share the same coordinate $n$.
Finally, we close this review of related results by mentioning the
work of Ortmann-Quastel-Remenik \cite{OQR15a}, \cite{OQR15b}. There they obtain formulae for the one point distribution (as opposed to our treatment of two-point distributions) 
of the asymmetric exclusion process with flat and half-flat initial data. From these, passing to a weakly asymmetric limit they deduce the law for the one-point 
function of the point to plane continuum directed polymer. Their method is based again on the coordinate Bethe ansatz. Motivated by related work of Lee 
\cite{L10}, this method allowed them to make a guess for the above mentioned formulae. Cross terms (which this time are not given by products of gamma functions)
appear also in this situation, preventing them from rigorously deriving the convergence to Tracy-Widom GOE.
\vskip 2mm
The article is organised as follows: In Section \ref{sec:onRSK}, we start by making a quick review of the main objects of RSK (Section \ref{sec:RSKrem}) 
and of geometric RSK (Section \ref{section:localmoves})
focusing on the definition and properties of the local moves. We also give a quick reminder of Whittaker functions and the properties that we will be using.
In Section \ref{sec:gRSKpoly} we extend the geometric RSK to the case of polygonal input arrays and we use its properties 
 to provide the first formulae for the joint Laplace transform of several point-to-point partition functions. We also use the Plancherel theory for Whittaker
function to obtain contour integral formulae for the Laplace transform of partition functions at equal time. In Section \ref{sec:gPNG} we construct the geometric PNG and deduce 
its main properties. In Section \ref{sec:Airy} we elaborate on the formulae for the joint Laplace transform for two partition functions at equal time and we show formally that
their joint distribution converges to the two-point function of the Airy process.

\section{Geometric RSK on polygonal patterns}\label{sec:onRSK}
The geometric RSK correspondence was introduced by Kirillov \cite{K01} as a bijective mapping
 $$\grsk:(\R_{>0})^{n\times m}\to (\R_{>0})^{n\times m},$$
between positive matrices, via a geometric lifting (i.e. $(\max,+)$ 
replaced by $(+,\times)$) of the Berenstein-Kirillov  \cite{BK95}
implementation of RSK in terms of piecewise linear transformations.
 Kirillov observed in his paper a number of interesting properties many of which deserve further investigation. A matrix algebra framework was 
set  by Noumi and Yamada~\cite{NY04} to accommodate Kirillov's theory and this framework was instrumental for \cite{COSZ14}.
 In \cite{OSZ14} gRSK was formulated via a sequence of `local moves' on matrix elements and this formulation was important in 
  exhibiting its volume preserving properties. 
  
  \subsection{Reminder of RSK, Young tableaux, Gelfand-Tsetlin patterns}\label{sec:RSKrem}
  The Knuth correspondence takes a matrix $(w_{ij})_{m\times n}$ of nonnegative integers and maps it to a pair of Young tableaux $(P,Q)$. A Young tableau is a collection of boxes arranged in left-justfied rows of lengths $\gl_1\geq \gl_2 \geq \cdots$. The boxes are filled with the numbers $1,2,...,n$, such that they are
   weakly increasing across rows (from left to right) and strictly increasing along columns (from top to bottom) 
   (this is the defining property of a {\it semi-standard} tableau).
  It is useful to
  think of the input matrix $(w_{ij})_{m\times n}$ as a sequence of words $(1^{w_{11}} 2^{w_{12}}\cdots n^{w_{1n}} )
  \cdots (1^{w_{m1}} 2^{w_{m2}}\cdots n^{w_{mn}}) $ where $j^{w_{ij}}$ denotes a sequence of $w_{ij}$ letters $j$ in the $i^{th}$ word. Each letter in this sequence is inserted in the $P$ tableau via the Robinson-Schensted row insertion. That is, a letter $j$ attempts to be inserted in the first row of the $P$ tableau by first bypassing all the boxes that contain numbers less or equal to $j$ and then occupying the
   first box that was occupied by a letter greater than $j$. If all boxes contain letters which are less or equal to $j$, then $j$ is appended at the end of the row. Otherwise, the letter $k>j$, that is replaced by $j$, is removed from the first row and tries to be inserterted in the second row in the same manner. This bumping procedure continues, with a box being eventually added at the end of a row in the $P$ tableau,
   filled with the letter $j$, and at the same time 
  another box is added at the same location in the $Q$ tableau filled with the letter $i$. 
  One of the consequences of this algorithm is that the $P$ and the $Q$ tableaux have the same row lengths 
 $\gl_1\geq \gl_2\geq \cdots$ and the vector of row lengths $(\gl_1,\gl_2,...)$ is called the {\it shape} of the tableau. So $P$ and $Q$ have the same shape.

Let us note that there are extensions of the RSK correspondence to the case where the input matrix $(w_{ij})_{m\times n}$ is replaced by 
general polygonal, Young-diagram-like arrays of the form $\polygon$. Such extensions are related to {\it Fomin growth diagrams} \cite{F86}; see also \cite{Kr06}
for further discussions around these constructions. 

A useful  way to record Young tableaux is via the Gelfand-Tsetlin patterns. This is a triangular array of numbers $(z_{ij})_{1\leq i \leq j \leq n}$ and the correspondence to Young tableaux is via
\[
z_{ij} = \sum_{r=j}^i\sharp \{ \text{of $r$'s in row $j$ of the Young tableau} \}, \qquad \text{for} \,\, 1\leq j\leq i\leq n.
\]
Note that $z_{ni}=\gl_i$, the length of the $i^{th}$ row in the Young tableau.
Given the definition of a Young tableaux, one obtains the interlacing conditions for the Gelfand-Tsetlin patterns
\begin{align}\label{GT_interlace}
z_{i+1,j+1}\leq z_{ij} \leq z_{i+1,j}, \qquad \text{for} \,\, 1\leq j \leq i \leq n.
\end{align}
 \vskip 2mm
 The mappings we just described are depicted in the following diagram (for simplicity, we depict them in the special case of a square input matrix, i.e. $m=n$, where both Gelfand-Tsetlin (GT)
  patterns have a triangular shape)
 \begin{align}\label{RSKmaps}
&\\
& (w_{ij})_{n\times n} 
 \xrightarrow{\text{RSK}}\,\,
 \Ypolygon _{(P)} \,\,  \Ypolygon _{(Q)} 
 \xrightarrow{\,\,\text{GT}\,\,}\,\,
 \Ppattern \,\, \Qpattern 
 \longrightarrow
 \begin{bmatrix}
    z_{nn} & \dots  & z'_{11} \\
    \vdots & \ddots  & \vdots \\
    z_{11} & \dots   & z_{n1}
\end{bmatrix}
\notag
 \end{align}
 Even though not depicted, 
 the boxes of Young diagrams $(P,Q)$ are understood to be filled with numbers weakly increasing along rows and strictly increasing along columns.
 In the last mapping we glued together the Gelfand-Tsetlin patterns in an $n\times n$ matrix.
 This is possible since the $P$ and $Q$ tableaux have the same shape and hence the corresponding patterns have the same bottom row, which is placed on the diagonal.
 The above sequence of mappings explains the reason why RSK can be viewed as a mapping between matrices.
 \vskip 2mm
 Let us now assume that we have inserted the words 
 $(1^{w_{11}} 2^{w_{12}}\cdots n^{w_{1n}} )
  \cdots (1^{w_{m-1,1}} 2^{w_{m-1,2}}\cdots n^{w_{m-1,n}}) $ and let us denote by $(z_{ij})$ the variables of the Gelfand-Tsetlin pattern after these insertions.
  We next want to insert the word $(1^{w_{m,1}} 2^{w_{m-1,2}}\cdots n^{w_{m,n}}) $ and let us denote by $(\tilde{z}_{ij})$ the variables of the new Gelfand-Tsetlin pattern.
    After this insertion, the number of $1$'s in the first row will be 
 \begin{align}\label{first_row_1} 
 \tilde{z}_{11}=z_{11}+w_{m1}.
 \end{align}
  This insertion will bump a number of $2$'s, which will then be inserted in the second row. The
number of bumped $2$'s will be equal to 
\begin{align}\label{twobump}
\min\big(z_{21}-z_{11}, w_{m1}\big)  = \min\big(z_{21}-z_{11}, \tilde{z}_{11}-z_{11}\big) = - \max \big( -(z_{21}-z_{11}), -(\tilde{z}_{11} -z_{11})\big),
\end{align}
 depending on whether the number of $1$'s inserted in the previous step (which equals $w_{m1}$) bumped all the existing $2$'s in the first row 
 (which equals $z_{21}-z_{11}$) or not. Moreover, a number $w_{m2}$ of 
 $2$'s will be inserted in the first row, at the end of a sequence of $1$'s of length $\max\big(z_{21}, \tilde{z}_{11}\big)$ (bumping at the same time a number of $3$'s) leading to the update
 \begin{align}\label{first_row_2}
 \tilde{z}_{21}=w_{m2}+\max\big(z_{21}, \tilde{z}_{11}\big).
 \end{align}
 The bumped $2$'s will be inserted in the second row, thus increasing the number of $2$'s there to
 \begin{align}\label{second_row_2}
 \tilde{z}_{22}= z_{22}  - \max \big( -(z_{21}-z_{11}), -(\tilde{z}_{11} -z_{11})\big).
 \end{align}
 A similar procedure of inserting and bumping with the remaining letters carries on and can be described by analogous set of piecewise linear transformations (local moves).
 
\subsection{Geometric RSK via local moves and Whittaker functions.}\label{section:localmoves}
The local move formulation of gRSK can be viewed as a geometrization (i.e. replace $(\max,+,-)$ with $(+,\times, /\,)$) 
of the local moves described in the previous section, cf. \eqref{first_row_1}, \eqref{twobump}, \eqref{first_row_2}, \eqref{second_row_2}.  Let us recall how it works:
 \vskip 2mm
 Let an input matrix $X=(x_{ij})\in(\R_{>0})^{n\times m}$.  Define $l_{11}$ to be the identity map. For $2\le i\le n$ and $2\le j\le m$, 
define $l_{i1}$ to be the mapping that replaces
the element $x_{i1}$ by $x_{i-1,1}x_{i1}$ and $l_{1j}$ to be the mapping that replaces the element $x_{1j}$ by 
$x_{1,j-1}x_{1j}$, while leaving all other elements of $X$ unchanged.
For each $2\le i\le n$ and $2\le j\le m$ define 
the mapping $l_{ij}$ which replaces the submatrix
$$ \begin{pmatrix} x_{i-1,j-1}& x_{i-1,j}\\ x_{i,j-1}& x_{ij}\end{pmatrix}$$
of $X$ by its image under the map
\begin{equation}\label{abcd}
\begin{pmatrix} a& b\\ c& d\end{pmatrix} \qquad \mapsto \qquad 
\begin{pmatrix} bc/(ab+ac) & b\\ c& d(b+c) \end{pmatrix},
\end{equation}
while leaving the other elements unchanged. The upper-left corner of the output sub matrix \eqref{abcd} corresponds to the bumped elements from a row cf. \eqref{second_row_2}, 
while its lower-right corner corresponds to the updated number of elements in a row, cf. \eqref{first_row_2}.

For $1\le i\le n$ and $1\le j\le m$, set
$$\pi^j_i:=l_{ij}\circ\cdots\circ l_{i1}.$$
For $1\le i\le n$, define
\begin{equation}
R_i := \begin{cases} \pi_1^{m-i+1}\circ\cdots\circ\pi^m_i & i\le m\\
\pi^1_{i-m+1} \circ\cdots\circ\pi^m_i & i\ge m , \end{cases}  \label{defR}
\end{equation}
which corresponds to the (geometric) insertion of the $i^{th}$ row of the input matrix (or, bearing in mind the analogies from the previous section, the insertion of the $i^{th}$ word).
\vskip 2mm
We can now give the algebraic definition of the gRSK mapping as
\begin{equation}
 \grsk:=R_n\circ\cdots\circ R_1. \label{defT}
 \end{equation}
Let us illustrate the sequence with an example in the case $n=m=2$.  Then 
$$R_1=\pi^2_1=l_{12}\circ l_{11}=l_{12},\qquad R_2=\pi^1_1\circ\pi^2_2=l_{11}\circ l_{22}\circ l_{21}=l_{22}\circ l_{21}.$$
$$\grsk=R_2\circ R_1=l_{22}\circ l_{21}\circ l_{12}.$$
and
\begin{align*}
\grsk: \begin{pmatrix} a&b\\c&d\end{pmatrix} \stackrel{l_{12}}{\longmapsto}
\begin{pmatrix} a&ab\\c&d\end{pmatrix} \stackrel{l_{21}}{\longmapsto}
\begin{pmatrix} a&ab\\ac&d\end{pmatrix} \stackrel{l_{22}}{\longmapsto}
\begin{pmatrix} bc/(b+c)&ab\\ac&ad(b+c)\end{pmatrix} .
\end{align*}
Let us point out that
the mappings $R_i$ defined above can also be written as
$$R_i=\rho^i_m\circ\cdots\circ\rho^i_2\circ\rho^i_1$$
where
\begin{eqnarray}\label{rodef}
\rho^i_j=\begin{cases} l_{1,j-i+1}\circ\cdots\circ l_{i-1,j-1}\circ l_{ij} & i\le j\\
l_{i-j+1,1}\circ\cdots\circ l_{i-1,j-1}\circ l_{ij} & i\ge j.\end{cases}
\end{eqnarray}
Here we used the fact that $l_{ij}\circ l_{i'j'}=l_{i'j'}\circ l_{ij}$ whenever $|i-i'|+|j-j'|>2$, which is an immediate consequence of the fact that, in this case,
the mappings $l_{ij}, l_{i'j'}$ transform $2\times 2$ submatrices which don't overlap. 
The mappings $\rho^i_j$ are also going to be used in the construction of the geometric PNG.

This representation is related to the Bender-Knuth transformations, cf \cite{K01}, \cite{BK95}:
For each $1\le i\le n$ and $1\le j\le m$, denote by $b_{ij}$ the map on $(\R_{>0})^{n\times m}$ which
takes a matrix $X=(x_{qr})$ and replaces the entry $x_{ij}$ by 
\begin{equation}\label{bk1}
x'_{ij} = \frac1{x_{ij}} (x_{i,j-1}+x_{i-1,j}) \left( \frac1{x_{i+1,j}}+\frac1{x_{i,j+1}}\right)^{-1},
\end{equation}
leaving the other entries unchanged, with the conventions that $x_{0j}=x_{i0}=0$, 
$x_{n+1,j}=x_{i,m+1}=\infty$ for $1< i<n$ and $1< j<m$, but 
$x_{10}+x_{01}=x_{n+1,m}^{-1}+x_{n,m+1}^{-1}=1$.
Denote by $r_{j}$ the map which replaces the entry $x_{nj}$ by
$x_{n,j+1}/x_{nj}$ if $j<m$ and $1/x_{nm}$ if $j=m$, leaving the other entries unchanged.  
For $j\le m$, define
\begin{equation}\label{bk}
h_j=\begin{cases} b_{n-j+1,1}\circ\cdots\circ b_{n-1,j-1}\circ b_{nj} & j\le n\\
b_{1,j-n+1}\circ\cdots\circ b_{n-1,j-1}\circ b_{nj} & j\ge n.\end{cases}
\end{equation}
It is straightforward from the definitions to see that the mapping $\rho^n_j:(\bbR_{>0})^{n\times m}\mapsto (\bbR_{>0})^{n\times m}$ satisfies 
 $\rho^n_j=h_j\circ r_{j}$.  
\vskip 2mm
The next theorem summarises the main properties of gRSK. The first point is due to Kirillov \cite{K01}, while points 2., 3. where obtained in \cite{OSZ14}
\begin{theorem}\label{thm:OSZ}
Let  $W=(w_{ij} \,,\, 1\leq j\leq n\,,\, 1\leq i\leq m)$
and $T=(t_{ij} \,,\, 1\leq j\leq n\,,\, 1\leq i\leq m)=\grsk(W)$.  Then
\begin{itemize}
\item[{\bf 1.}]  Let $\Pi^{(r)}_{m,n}$ be the set of r-tuples of non intersecting down-right paths starting from points $(1,1),(1,2),...,(1,r)$ and ending at
points $(m,n-r+1),...,(m,n)$, respectively, with $r=1,...,m\wedge n$. Then 
\begin{align*}
t_{m-r+1,n-r+1}\dotsm t_{m-1,n-1} t_{m,n} 
= \sum_{(\pi_1,\ldots,\pi_r)\in\Pi^{(r)}_{m,n}} \, \prod_{(i,j)\in \pi_1\cup\cdots\cup\pi_r} w_{ij},
\end{align*}
and in particular 
\begin{align*}
 t_{m,n} 
= \sum_{\pi\in\Pi^{(1)}_{m,n}} \, \prod_{(i,j)\in \pi} w_{ij},
\end{align*}
is the polymer partition function $Z_{m,n}$.
\item[{\bf 2.}] Setting $t_{ij}=0$, whenever $(i,j)$ does not belong to the array $((i,j)\colon 1\leq j\leq n\,,\, 1\leq i\leq m)$, we have
\begin{align*}
\sum_{i,j}\frac{1}{w_{ij}}=\mathcal{E}(T):=\frac{1}{t_{11}}+\sum_{i,j}\frac{t_{i-1,j}+t_{i,j-1}}{t_{ij}}
\end{align*}
\item[{\bf 3.}]
The transformation 
\begin{align*}
(\log w_{ij}\, ,\,1\leq j\leq n\,,\, 1\leq i\leq m ) \mapsto (\log t_{ij}\, ,\, 1\leq j\leq n\,,\, 1\leq i\leq m),
\end{align*}
has Jacobian equal to $\pm 1$.
\end{itemize}
\end{theorem}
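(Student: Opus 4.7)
The plan is to reduce each of the three assertions to the local-move decomposition $\grsk = R_n \circ \cdots \circ R_1$ from \eqref{defT}, and then check that an appropriate invariant is preserved by each elementary move $l_{ij}$ defined in \eqref{abcd}. Since $l_{ij}$ alters only a $2\times 2$ submatrix, the resulting verifications are purely algebraic, and global statements follow by composition.

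For Point 3, passing to logarithmic coordinates $(u,v,w,z) = (\log a, \log b, \log c, \log d)$, the map \eqref{abcd} becomes
\begin{align*}
(u,v,w,z) \ \longmapsto \ \bigl(v + w - u - \log(e^v + e^w),\ v,\ w,\ z + \log(e^v + e^w)\bigr).
\end{align*}
Cofactor expansion of the Jacobian along the first column collapses this to a $3\times 3$ lower-triangular block and yields determinant $-1$; the boundary moves $l_{1j}$ and $l_{i1}$ act by multiplication by a neighbor and so are shears in log coordinates with unit Jacobian. Composing these moves shows that the log-Jacobian of $\grsk$ equals $\pm 1$.

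For Point 2, the plan is to introduce an interpolating energy functional defined on the arrays produced at intermediate stages of the local-move sequence: a sum of reciprocals over the cells not yet processed, plus a suitable local version of the $\mathcal{E}$ expression over cells already updated. The task then reduces to verifying invariance of this functional under a single move $l_{ij}$, which is a finite algebraic identity on the block $(a,b,c,d)$ from \eqref{abcd} together with its immediate neighbors feeding the energy sum. At the start of the sequence this functional equals $\sum 1/w_{ij}$; at the end it coincides with $\mathcal{E}(T)$, yielding the claimed equality.

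Point 1 is the geometric analogue of Greene's theorem and is the hardest step. I would proceed by induction on the number of inserted rows, showing that after the application of $R_i$ the products $t_{m-r+1,n-r+1}\cdots t_{m,n}$ correctly encode the partition-function sum over $r$-tuples of non-intersecting down-right paths in the subarray built from the first $i$ rows. The inductive step requires unwinding $R_i = \rho^i_m \circ \cdots \circ \rho^i_1$ from \eqref{rodef} and tracking how each intermediate step updates the relevant diagonal products. The main obstacle, and where most of the effort goes, is in identifying the correct inductive invariant: in the classical $(\max,+)$ RSK one tracks a single optimal path, while in the geometric lifting the invariant is a generating function summed over \emph{all} path tuples, and one must show that each elementary move $\rho^i_j$ produces precisely the enlarged sum from the previous one. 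Once this is established, specialisation to $r=1$ immediately identifies $t_{m,n}$ with the polymer partition function $Z_{m,n}$.
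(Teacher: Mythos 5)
The paper does not actually prove this theorem: point \textbf{1} is cited from Kirillov \cite{K01} and points \textbf{2}, \textbf{3} from \cite{OSZ14}, and the paper only reproduces the local-move machinery. Your treatment of points \textbf{2} and \textbf{3} is correct and follows exactly the strategy that the paper itself deploys for the analogous polygonal and gPNG statements: the Jacobian computation for \eqref{abcd} in logarithmic coordinates is right (the cofactor expansion does give $-1$, and the boundary moves are unit-Jacobian shears), and the interpolating energy functional you describe for point \textbf{2} is precisely the quantity $\mathcal{E}^k_n$ that the paper introduces in the proof of Proposition \ref{prop:PNG_energy} (following Theorem 3.2 of \cite{OSZ14}), whose invariance under a single move is indeed a finite algebraic identity on the $2\times 2$ block and its neighbours.

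For point \textbf{1}, however, there is a genuine gap. Your paragraph announces an induction on inserted rows and then states that ``the main obstacle, and where most of the effort goes, is in identifying the correct inductive invariant'' --- but that invariant is never identified, and it is exactly the mathematical content of the geometric Greene theorem. The difficulty is not cosmetic: the individual entries $t_{ij}$ produced by the intermediate moves $\rho^i_j$ are \emph{not} themselves sums over path ensembles (only the specific diagonal products $t_{m-r+1,n-r+1}\cdots t_{m,n}$ are), so one cannot simply ``track how each intermediate step updates the relevant diagonal products'' entry by entry; one must either (a) verify a closed system of identities for all the relevant minors simultaneously, or (b) bypass the local moves entirely via the Noumi--Yamada matrix formulation \cite{NY04}, in which $\grsk$ is encoded by products of bidiagonal matrices and the diagonal products above are identified with minors of the product matrix, whereupon a Lindstr\"om--Gessel--Viennot/Cauchy--Binet argument produces the sum over non-intersecting down-right path tuples. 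This second route is how the result is established in \cite{COSZ14}, \cite{OSZ14}. As written, your proposal for point \textbf{1} is a statement of intent rather than a proof, and the step you defer is the one that would fail without a substantially new idea.
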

The previous theorem was used in \cite{OSZ14} to identify the push-forward measure under RSK when the input matrix is assigned the log-gamma distribution. Let us first 
define the latter
\begin{definition}[log-gamma measure]\label{def:log-gamma}
 For a sequence of real numbers $\ga=(\ga_i)_{i\geq 1}, \hat\ga=(\hat\ga_j)_{j\geq 1}$, such that $\ga_i+\hat\ga_j>0$ for all $(i,j)$,
  we define the $(\ga,\hat\ga)$-log-gamma measure on $W$, by
 \begin{align}\label{polygo_meas}
 \bbP(\dd W):=\prod_{i,j}\frac{1}{\Gamma(\ga_i+\hat\ga_j)} w_{ij}^{-\ga_i-\hat\ga_j} e^{-1/w_{ij}} \,\,\frac{\dd w_{ij}}{w_{ij}}.
 \end{align}
 \end{definition}
 \vskip 2mm
 Introducing the notation (we will also be using $\wedge$ for minimum and $\vee$ for maximum)
 \begin{align}\label{type}
 \tau_{mj}:=\frac{ \prod_{r=0}^{m\wedge j-1} t_{m-r,j-r}}{ \prod_{r=0}^{m\wedge (j-1)-1} t_{m-r,j-r-1}}
 \quad
\text{ and} 
 \quad
 \tau_{in}:=\frac{ \prod_{r=0}^{i\wedge n-1} t_{i-r,n-r}}{ \prod_{r=0}^{(i-1)\wedge n-1} t_{i-r-1,n-r}},
\end{align}
for $j=1,...,n$ and $i=1,...,m$, we have that, cf. \cite{OSZ14}, Corollary 3.3.
\begin{align*}
\bbP\circ \grsk^{-1}(\dd T)=
\prod_{i,j}\frac{1}{\Gamma(\ga_i+\hat\ga_j)} 
\Big( \prod_{j=1}^n \tau_{mj}^{-\hat{\ga}_j} \Big) \Big(\prod_{i=1}^m \tau_{in}^{-\ga_i}\Big) \, e^{-\mathcal{E}(T)} \prod_{i,j} \frac{\dd t_{ij}}{t_{ij}},
\end{align*}
where in the above products the indices $i,j$ run over $i=1,...,m$ and $j=1,...,n$.
We call the vectors $(\tau_{mj})_{j=1,...,n}, (\tau_{in})_{i=1,...,m} $ the (geometric) {\em type} of the (geometric) $P$ and $Q$ tableaux, respectively. 
This definition is the geometric version of the type of a Young tableau, which in terms of Gelfand-Tsetlin variables 
is defined to be the vector 
$\Big(\sum_{j=1}^iz_{ij} - \sum_{j=1}^{i-1} z_{i-1,j}\colon i\geq 1\Big)$ and records the number of 
letters $1,2,...$ in the tableau. 
 \vskip 2mm
 Let us now assume, for the sake of exposition, that the input matrix $W$ is a square matrix, i.e. $m=n$. Integrating the measure $\bbP\circ \grsk^{-1}(\dd T)$ over
 the variables $t_{ij}, i\neq j$, we obtain the law of the bottom row of the Gelfand-Tsetlin patterns cf. \eqref{RSKmaps}  (which encode the geometric lifting of the shape of the corresponding tableaux. This is given (in terms of Whittaker functions) as follows 
 \[
  e^{-1/t_{11}} \,\Psi^{(n)}_{\ga_1,...,\ga_n}(t_{nn},...,t_{11}) \,\Psi^{(n)}_{\hat\ga_1,...,\hat\ga_n}(t_{nn},...,t_{11})\,\,\prod_{i=1}^n \frac{\dd t_{ii}}{t_{ii}}.
 \]
 Whittaker functions have an integral formula due to Givental
 \[
 \Psi^{(n)}_{\ga_1,...,\ga_n}(x_1,...,x_n)=\int_{\mathcal{Z}(\bx)}\, \prod_{i=1}^n\left(\frac{z_{i1}\cdots z_{ii}}{z_{i-1,1}\cdots z_{i-1,i-1}}  \right)^{-\ga_i}
 e^{-\sum_{1\leq j\leq i\leq n} \big(\frac{z_{i,j}}{z_{i+1,j}}+\frac{z_{i+1,j+1}}{z_{i,j}}\big)} \prod_{1\leq j\leq i \leq n-1}\frac{\dd z_{ij}}{z_{ij}}
 \]
 where $\mathcal{Z}(\bx)=\big\{ (z_{ij})_{1\leq j\leq i \leq n} \in(\bbR_{>0})^{n(n+1)/2} \colon z_{nj}=x_j , \,\text{for}\, j=1,...,n\big\}$.
 If we change variables and define $\psi^{(n)}_{\ga_1,...,\ga_n}(x)=\Psi^{(n)}_{-\ga_1,...,-\ga_n}(z)$, where $x_i=\log z_i$ for $i=1,\ldots,n$, then 
 the functions $\psi^{(n)}_{\ga_1,...,\ga_n}(x)$ are eigenfunctions of the open quantum Toda chain with Hamiltonian given by
\[
H=-\sum_i \frac{\partial^2}{\partial x_i^2} + 2 \sum_{i=1}^{n-1} e^{x_{i+1}-x_i},
\]
with eigenvalue $-\left(\sum_i\ga_i^2\right)$.
Moreover, they play an important role in the theory of automorphic forms \cite{G06}. In our setting they appear more naturally as generalisation of Schur functions
having the form of generating functions of geometric Gelfand-Tsetlin pattern. The latter are to be thought as triangular arrays in the spirit of the standard Gelfand-Tsetlin pattern, with the difference that the interlacing conditions \eqref{GT_interlace} are violated. Instead, the hard boundary conditions of relations \eqref{GT_interlace}
are `softened' via the potential 
\[
-\sum_{1\leq j\leq i\leq n} \big(\frac{z_{i,j}}{z_{i+1,j}}+\frac{z_{i+1,j+1}}{z_{i,j}}\big),
\]
which penalises exponentially patterns that violate the interlacing conditions, see Figure \ref{GT_Whittaker}.
\begin{figure}[t]
 \begin{center}
\begin{tikzpicture}[scale=1.0]

\draw [fill] (0,0) circle [radius=0.05];
\draw [fill] (-0.7,-0.7) circle [radius=0.05];
\draw [fill] (-1.4,-1.4) circle [radius=0.05];
\draw [fill] (-2.1,-2.1) circle [radius=0.05];

\draw [fill] (0.7,-0.7) circle [radius=0.05];
\draw [fill] (1.4,-1.4) circle [radius=0.05];
\draw [fill] (2.1,-2.1) circle [radius=0.05];

\draw [fill] (0,-1.4) circle [radius=0.05];
\draw [fill] (0.7,-2.1) circle [radius=0.05];
\draw [fill] (-0.7,-2.1) circle [radius=0.05];

\node at (0.5,0.1) {\small{{\bf $z_{11}$}}};
\node at (-0.2,-0.7) {\small{{\bf $z_{22}$}}};
\node at (-0.9,-1.4){\small{{\bf $z_{33}$}}};
\node at (-1.7,-2.1) {\small{{\bf $z_{44}$}}};

\node at (1.1,-0.7) {\small{{\bf $z_{21}$}}};
\node at (1.8,-1.4) {\small{{\bf $z_{31}$}}};
\node at (2.5,-2.1) {\small{{\bf $z_{41}$}}};

\node at (0.5,-1.4) {\small{{\bf $z_{32}$}}};
\node at (1.2,-2.1) {\small{{\bf $z_{42}$}}};
\node at (-0.1,-2.1) {\small{{\bf $z_{43}$}}};

\draw[thick][->] (-0.6,-0.6)--(-0.1,-0.1);
\draw[thick][->] (-1.3,-1.3)--(-0.8,-0.8);
\draw[thick][->] (-2.0,-2.0)--(-1.5,-1.5);

\draw[thick][->] (0.1,-0.1)--(0.6,-0.6);
\draw[thick][->] (0.8,-0.8)--(1.3,-1.3);
\draw[thick][->] (1.5,-1.5)--(2.0 ,-2.0 );

\draw[thick][->] (-0.6,-0.8)--(-0.1,-1.3);
\draw[thick][->] (0.1,-1.5)--(0.6, -2.0);
\draw[thick][->] (0.1,-1.3)--(0.6, -0.8);
\draw[thick][->] (-0.6,-2.0)--(-0.1, -1.5);
\draw[thick][->] (0.8,-2.0)--(1.3, -1.5);
\draw[thick][->] (-1.3, -1.5)--(-0.8,-2.0);
\end{tikzpicture}

\end{center}  
\caption{ \small  This pictures shows a Gelfand-Tsetlin pattern and the arrows depict diagrammatically the potential in the integral form for the Whittaker function,
which is $-\sum_{a\to b} z_a/z_b $.
}  \label{GT_Whittaker}
\end{figure}
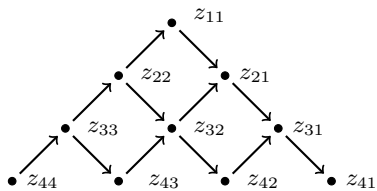

One of the reasons that the log-gamma measure is amenable to analysis is the fact that Whittaker functions have tractable Fourier analysis.
In particular, there exists a Plancherel theorem  \cite{STS94}, \cite{KL01}:
\begin{theorem}\label{thm:Plancerel}
The integral transform
$$\hat f(\lambda) = \int_{(\R_{>0})^n} f(x) \Psi^{(n)}_\lambda(x) \prod_{i=1}^n \frac{\dd x_i}{x_i}$$
defines an isometry from $L_2((\R_{>0})^n, \prod_{i=1}^n \dd x_i/x_i)$ onto 
$L^{sym}_2((\iota\R)^n,s_n(\lambda) \dd\lambda)$, where $L_2^{sym}$ is the space of $L_2$
functions which are symmetric in their variables (the variables $\lambda_1,\lambda_2,...$ are unordered), $\iota=\sqrt{-1}$ and 
$$s_n(\lambda)=\frac1{(2\pi\iota)^n n!} \prod_{i\ne j} \Gamma(\lambda_i-\lambda_j)^{-1},$$
is the {\em Sklyanin} measure.
That is, for any two functions $f,g\in L_2((\R_{>0})^n, \prod_{i=1}^n \dd x_i/x_i)$, it holds that
\[
\int_{(\R_{>0})^n} f(x) g(x) \prod_{i=1}^n\frac{\dd x_i}{x_i} = \int_{(\iota\R)^n} \hat{f}(\lambda) \overline{\hat g (\gl)} s_n(\gl) \dd \gl.
\]
\end{theorem}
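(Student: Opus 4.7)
The plan is to argue by induction on $n$, exploiting the recursive raising structure made manifest by Givental's formula. The base case $n=1$ is the classical Mellin--Plancherel theorem: since $\Psi^{(1)}_\lambda(x)=x^{-\lambda}$, the map $\hat f(\lambda)=\int_0^\infty f(x)\,x^{-\lambda}\,\dd x/x$ is the Mellin transform along $\iota\R$ with Plancherel measure $s_1(\lambda)=(2\pi\iota)^{-1}$.

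For the inductive step, one reads off from Givental's formula, by integrating out the row $(z_{n-1,1},\ldots,z_{n-1,n-1})$, the one-row recursion
$$
\Psi^{(n)}_{\lambda_1,\ldots,\lambda_n}(x)
= (T_{\lambda_n}\Psi^{(n-1)}_{\lambda_1,\ldots,\lambda_{n-1}})(x),\qquad
(T_\mu g)(x):=\int_{(\R_{>0})^{n-1}} K_\mu(x;y)\,g(y)\prod_{j=1}^{n-1}\frac{\dd y_j}{y_j},
$$
with kernel
$$
K_\mu(x;y)=\left(\frac{\prod_{i=1}^n x_i}{\prod_{j=1}^{n-1} y_j}\right)^{-\mu}
\exp\!\left(-\sum_{j=1}^{n-1}\!\left(\frac{y_j}{x_j}+\frac{x_{j+1}}{y_j}\right)\right).
$$
Consequently $\hat f^{(n)}(\lambda_1,\ldots,\lambda_n)$ equals the $(n-1)$-level Whittaker transform of $T_{\lambda_n}^* f$ evaluated at $(\lambda_1,\ldots,\lambda_{n-1})$, and the inductive hypothesis yields
$$
\int_{(\iota\R)^{n-1}}|\hat f^{(n)}(\lambda)|^2\,s_{n-1}(\lambda_1,\ldots,\lambda_{n-1})\,\dd\lambda_1\cdots\dd\lambda_{n-1}=\|T_{\lambda_n}^* f\|_{L_2}^2.
$$

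The task is then reduced to establishing the spectral identity
$$
\int_{\iota\R} T_\mu T_\mu^*\,\cdot\,\frac{s_n(\lambda_1,\ldots,\lambda_{n-1},\mu)}{s_{n-1}(\lambda_1,\ldots,\lambda_{n-1})}\,\dd\mu=\mathrm{id},
$$
on a dense subspace and in a suitable sense averaged over $\lambda_1,\ldots,\lambda_{n-1}$. The kernel of $T_\mu T_\mu^*$ is computed explicitly from $K_\mu$: because $K_\mu$ depends on each $x_i$ only through one or two adjacent $y$-variables, each of the $n$ integrations over $x_i$ reduces to an integral of the form $\int_0^\infty x^{-s}e^{-a/x-bx}\,\dd x/x=2(b/a)^{s/2}K_s(2\sqrt{ab})$, i.e., a modified Bessel $K$-function. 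Substituting the Mellin--Barnes representation for each Bessel factor turns the identity into a multiple contour integral in Gamma functions, which, after repeated applications of the first Barnes lemma and the $S_n$-symmetry of $\Psi^{(n)}_\lambda$ in the spectral parameters, collapses onto the required ratio $s_n/s_{n-1}=n\,(2\pi\iota)^{-1}\prod_{j=1}^{n-1}\Gamma(\lambda_j-\mu)^{-1}\Gamma(\mu-\lambda_j)^{-1}$.

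The principal obstacle, in my view, is analytic rather than algebraic: the Gamma-function contour integrals diverge on the critical lines $\iota\R$, so one must work throughout on a dense subspace (say, smooth compactly supported functions), justify the contour shifts required for Barnes' lemma via Stirling-type bounds on $\Gamma$ and analogous decay bounds on $\Psi^{(n)}_\lambda$ (obtainable from Givental's formula or from Toda-operator estimates), and extend to $L_2$ by density. Surjectivity onto $L_2^{sym}((\iota\R)^n, s_n(\lambda)\,\dd\lambda)$ then follows from the spectral completeness of $\{\Psi^{(n)}_\lambda\}$ as joint eigenfunctions of the commuting family of quantum Toda differential operators (the quadratic one being $H=-\sum_i\partial_{x_i}^2+2\sum_{i=1}^{n-1}e^{x_{i+1}-x_i}$), or equivalently by verifying that the image contains a dense subset of symmetric Schwartz functions of $\lambda$.
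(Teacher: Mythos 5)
The paper does not prove this theorem: it is quoted verbatim from the literature, with the proof attributed to Semenov-Tian-Shansky \cite{STS94} and Kharchev--Lebedev \cite{KL01}. So there is no in-paper argument to compare against; for what it is worth, your outline is in the spirit of the Kharchev--Lebedev approach, which also proceeds by rank recursion through the degenerate (Mellin--Barnes/Givental) representation of $\Psi^{(n)}_\lambda$.

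As a proof, however, your proposal has a genuine gap at its centre. The inductive step is reduced, correctly, to the resolution-of-identity
\[
\int_{\iota\R} T_\mu T_\mu^* \,\frac{s_n(\lambda_1,\ldots,\lambda_{n-1},\mu)}{s_{n-1}(\lambda_1,\ldots,\lambda_{n-1})}\,\dd\mu=\mathrm{id},
\]
but this identity \emph{is} the theorem at rank $n$ given rank $n-1$: it is exactly where the factors $\Gamma(\lambda_i-\lambda_j)^{-1}$ of the Sklyanin measure must be produced, and asserting that the Bessel/Mellin--Barnes rewriting "collapses onto the required ratio after repeated applications of the first Barnes lemma" is not a proof of it --- no contours are specified, no Barnes-lemma application is exhibited, and the convergence issues you yourself flag (the integrands do not decay on the critical lines without the $S_n$-antisymmetrization supplied by $s_n$) are precisely what make this step delicate. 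Two further points: the ratio $s_n/s_{n-1}$ equals $\tfrac{1}{2\pi\iota\, n}\prod_{j=1}^{n-1}\Gamma(\lambda_j-\mu)^{-1}\Gamma(\mu-\lambda_j)^{-1}$ (the combinatorial factor is $1/n$, not $n$, since $(n-1)!/n!=1/n$); and the closing appeal to "spectral completeness of $\{\Psi^{(n)}_\lambda\}$" to get surjectivity is circular, because completeness of the Whittaker eigenfunctions is equivalent to the surjectivity half of the Plancherel theorem you are trying to prove --- you would need either an independent completeness argument (as in the quantum inverse scattering treatment of \cite{STS94}) or an explicit inversion formula.
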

Naturally, this Plancherel theorem will play a crucial role in this work, as well. We will also use an integral identity involving Whittaker functions 
with importance in the theory of automorphic forms (see \cite{B84})
\begin{theorem}[Stade] \label{thm:Stade}
 Suppose $r>0$ and $\lambda,\nu\in\C^n$, where $\Re (\lambda_i+\nu_j)>0$ for all $i$ and $j$.  Then
$$\int_{(\R_{>0})^n } e^{-r x_1} \Psi^{(n)}_{-\nu}(x) \Psi^{(n)}_{-\lambda}(x)\prod_{i=1}^n \frac{dx_i}{x_i}
= r^{-\sum_{i=1}^n (\nu_i+\lambda_i)} \prod_{ij}\Gamma(\nu_i+\lambda_j).$$
\end{theorem}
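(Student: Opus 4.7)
My plan is to prove Stade's identity by induction on $n$. The base case $n=1$ is immediate: Givental's formula collapses to $\Psi^{(1)}_{-\mu}(x)=x^{-\mu}$, so the integral reduces to $\int_0^\infty e^{-rx}x^{-\nu-\lambda}\,dx/x=r^{-(\nu+\lambda)}\Gamma(\nu+\lambda)$ after the change of variables $u=rx$.

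For the inductive step, I would exploit the layered structure of Givental's formula by peeling off the topmost row of the Gelfand--Tsetlin-like pattern. Setting $\lambda'=(\lambda_1,\ldots,\lambda_{n-1})$, one obtains the representation
\begin{align*}
\Psi^{(n)}_{-\lambda}(x_1,\ldots,x_n)
=\prod_{i=1}^n x_i^{\lambda_n}\int_{(\R_{>0})^{n-1}}\prod_{j=1}^{n-1}y_j^{-\lambda_n}
\,e^{-\sum_{j=1}^{n-1}(y_j/x_j+x_{j+1}/y_j)}\,\Psi^{(n-1)}_{-\lambda'}(y)\prod_{j=1}^{n-1}\frac{dy_j}{y_j},
\end{align*}
and analogously for $\Psi^{(n)}_{-\nu}$ with an auxiliary integration vector $w$ and parameter $\nu_n$. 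Substituting both expansions into the left-hand side and exchanging the orders of integration (justified by absolute convergence under the assumption $\Re(\lambda_i+\nu_j)>0$), the $n$ integrations over the $x_i$'s decouple into independent one-dimensional integrals: the $x_1$-integral is $\int_0^\infty x_1^{\lambda_n+\nu_n-1}e^{-rx_1-(y_1+w_1)/x_1}\,dx_1$, the $x_n$-integral is a pure gamma integral producing $\Gamma(\lambda_n+\nu_n)(y_{n-1}w_{n-1}/(y_{n-1}+w_{n-1}))^{\lambda_n+\nu_n}$, while each intermediate $x_j$-integral ($2\le j\le n-1$) produces a modified Bessel factor $K_{\lambda_n+\nu_n}$ coupling $(y_{j-1},w_{j-1})$ with $(y_j,w_j)$. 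After a suitable change of variables (such as $(y_j,w_j)\mapsto (y_jw_j,y_j/w_j)$) combined with the integral representation $K_s(2\sqrt{ab})=\tfrac12(b/a)^{s/2}\int_0^\infty t^{-s-1}e^{-t-ab/t}\,dt$, these Bessel factors should recombine with the interior Whittaker weight $\Psi^{(n-1)}_{-\nu'}$ and reduce the problem to a level-$(n-1)$ Stade identity with shifted parameter, to which the inductive hypothesis applies.

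The main obstacle is the combinatorial bookkeeping: one must check that after all the $x$-integrations the resulting integrand really reorganises into a level-$(n-1)$ Stade integral multiplied by exactly the prefactor $\prod_{j=1}^n\Gamma(\lambda_n+\nu_j)\prod_{j=1}^{n-1}\Gamma(\nu_n+\lambda_j)$ and the correct power of $r$, so that the $n^2$ gamma factors $\prod_{i,j}\Gamma(\lambda_i+\nu_j)$ are accounted for exactly once. An arguably cleaner alternative, in the spirit of Stade's original argument, is to work in the Mellin--Barnes picture: represent each $\Psi^{(n)}_{-\lambda}(x)$ as an iterated contour integral with a product-of-gammas kernel (as in Kharchev--Lebedev), multiply the two such representations, exchange the order of contour and $x$-integrations, and carry out the now-purely-Mellin $x$-integrations directly; the product $\prod_{i,j}\Gamma(\lambda_i+\nu_j)$ emerges after shifting contours and collecting the appropriate residues, with the asymmetric factor $e^{-rx_1}$ being what produces the single power $r^{-\sum_i(\lambda_i+\nu_i)}$ on the right-hand side.
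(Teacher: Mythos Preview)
The paper does not give its own proof of this theorem; it is quoted as a known result, with a reference to Stade's original proof and to the alternative gRSK proof in \cite{OSZ14}. That gRSK proof is conceptually very different from what you propose: one starts from the $(\alpha,\hat\alpha)$-log-gamma measure on $(\R_{>0})^{n\times n}$, pushes it forward under $\grsk$ using the volume-preserving and energy-preserving properties of Theorem~\ref{thm:OSZ}, and identifies the marginal on the diagonal $(t_{11},\ldots,t_{nn})$ as $e^{-1/t_{11}}\,\Psi^{(n)}_{\alpha}\,\Psi^{(n)}_{\hat\alpha}\prod dt_{ii}/t_{ii}$. The total mass of this push-forward equals $\prod_{i,j}\Gamma(\alpha_i+\hat\alpha_j)$ because that is the normalising constant of the log-gamma measure. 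Stade's identity (with $r=1$, and then by scaling for general $r$) drops out immediately as a Cauchy-type identity for Whittaker functions. No induction, no Bessel integrals, no Mellin--Barnes contours.

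Your inductive plan via the Givental recursion is a reasonable analytic line of attack, and the decoupling of the $x_j$-integrals you describe is correct. But the step you flag as ``the main obstacle'' is a genuine gap, not just bookkeeping: after integrating out the $x_j$'s you are left with a string of Bessel-$K$ factors coupling consecutive pairs $(y_{j-1},w_{j-1})$ to $(y_j,w_j)$, and there is no a priori reason these should reassemble into a single $\Psi^{(n-1)}_{-\lambda'}(y)\Psi^{(n-1)}_{-\nu'}(w)\,e^{-r'y_1}$ structure against which to apply the inductive hypothesis. In fact they do not: the $K$-factors introduce a new coupling between the $y$- and $w$-patterns that is absent from a clean level-$(n-1)$ Stade integrand. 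Stade's own argument has to work harder than this, and the Mellin--Barnes route you mention at the end (via Kharchev--Lebedev) is the one that actually closes; as written, your primary inductive scheme does not reduce to a lower-level instance of the same identity without substantial additional input.
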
 
This identity was conjectured by Bump in \cite{B84} and proved by Stade in \cite{St02}. An alternative proof using the gRSK was provided in \cite{OSZ14}, Corollary 3.7, 
which also showed that Stade's identity is the analogue of the Cauchy identity for Schur functions.
\vskip 2mm
When using the Plancherel-Whittaker theorem we will have to check that certain functions, which are expressed as products of gamma functions, are $L^2$ integrable.
For this will use the following asymptotics of the gamma function, cf \cite{OLBC10}, formula 5.11.9 :
\begin{align}\label{gamma_asympto}
\lim_{b\to\infty}|\Gamma(a+\iota b)| e^{\frac{\pi}{2}|b|}|b|^{\frac{1}{2}-a}=\sqrt{2\pi},
\qquad a>0, \,b \in \mathbb{R} .
\end{align}

\subsection{Extension of $g$RSK to polygonal input arrays}\label{sec:gRSKpoly}
In this section we will extend the gRSK to the case where the input array is a polygonal array, Young-diagram-like
 of the form $\polygon$, rather than a matrix.
 Let us start by considering pairs of positive integers $(m_\ell,n_\ell)_{ \ell=1,2,...,k}$ with $m_{\ell} > m_{\ell-1}$ and $n_{\ell-1}>n_\ell$ and rectangular arrays 
 $W^{(\ell)} = (w_{ij} \,\colon\, m_{\ell-1} < i \leq m_\ell, 1\leq j \leq n_{\ell}  )$. By convention, we have that $m_0=n_0=0$. The general input pattern will consist of an array
 $W=\bigsqcup_{\ell=1}^kW^{(\ell)}$,  created by stacking the rectangular pattern $W^{\ell}$ below $W^{\ell-1}$ (see Figure \ref{poly_pattern} 
 for an illustration). 
 We define the index set of such a polygonal array $W$ as 
 \begin{align}\label{index}
 ind(W):=\bigcup_{\ell=1}^k \big\{(i,j)\colon m_{\ell-1} < i \leq m_\ell, 1\leq j \leq n_{\ell}   \big\}
 \end{align}
 We now come to
 \begin{definition}[gRSK on polygonal arrays]
 We define the geometric RSK on the {\it polygonal array} $W=\bigsqcup_{\ell=1}^k W^{(\ell)}$ inductively as follows:
 \begin{itemize}
 \item[$(1)$] Set $T^{(1)}:=\grsk(W^{(1)})\in \R^{m_1\times n_1}$. $T^{(1)}$ is then the rectangular output provided by gRSK, as this is described in
  Section \ref{section:localmoves}.
 \item[$(2)$] Create the array $W^{(2)}\bigsqcup T^{(1)}$ by appending matrix $W^{(2)}$ at the bottom of the output matrix $T^{(1)}$ obtained in the
 previous step. We then define $T^{(2)}:=
 R_{m_{2}}\circ R_{m_{2}-1}\circ \cdots\circ R_{m_{1}+1} \big(W^{(2)} \, \bigsqcup T^{(1)} \big) $. The crucial point, here, is that the transformations 
 $R_a,\, m_1+1\leq a \leq m_2$, act only on the entries $(i,j)$ of $W^{(2)}\bigsqcup T^{(1)}$ for which $i-j\geq m_1+1-n_2$. That is, they only transform the entries which lie {\it on} or {\it below} the north-west diagonal starting from entry $(m_1+1,n_2)$, while leaving all other entries unchanged, see Figure \ref{fig2}.

 \item[$(3)$] We can, now, define inductively $T^{(\ell)}:=
 R_{m_{\ell}}\circ R_{m_{\ell}-1}\circ \cdots\circ R_{m_{\ell-1}+1} \big(W^{\ell} \, \bigsqcup_{r=1}^{\ell-1} T^{(r)} \big) $ for $\ell\leq k$ and, finally, $\grsk(W):=T^{(k)}$.
 \end{itemize}
 \end{definition}
 
 \begin{figure}[t]
 \begin{center}
\begin{tikzpicture}[scale=.5]
\draw[help lines] (1,1) grid (3,4);
\draw[help lines] (1,4) grid (4,6);
\draw[help lines] (1,6) grid (6,8);
\draw[help lines] (1,8) grid (10,11);

\draw [fill] (3,1) circle [radius=0.1];
\draw [fill] (4,4) circle [radius=0.1];
\draw [fill] (6,6) circle [radius=0.1];
\draw [fill] (10,8) circle [radius=0.1];

\node at (4.2,1) {\small{$(11,3)$}};
\node at (5,4) {\small{$(8,4)$}};
\node at (7,6) {\small{$(6,6)$}};
\node at (11.2,8) {\small{$(4,10)$}};
\end{tikzpicture}
 \end{center}  
\caption{ \small  This figure shows a rectangular array $W=W^{(1)} \bigsqcup  W^{(2)} \bigsqcup W^{(3)} \bigsqcup W^{(4)} $, which 
is a concatenation of four rectangular arrays. Their bottom right corners are marked by points $(m_\ell,n_\ell)_{\ell=1,...,4}=(4,10), (6,6), (8,4), (11,3)$ }
\label{poly_pattern}
\end{figure}
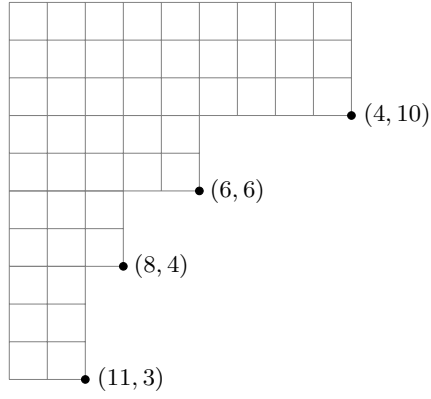

  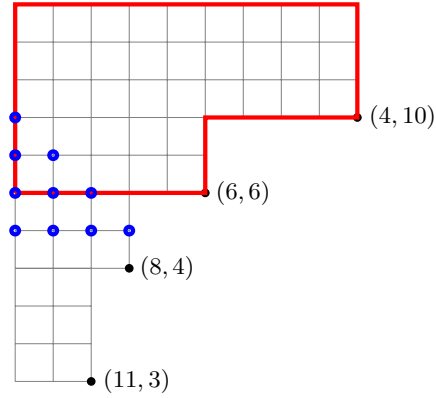
\begin{figure}[t]
 \begin{center}
\begin{tikzpicture}[scale=.5]
\draw[help lines] (1,1) grid (3,4);
\draw[help lines] (1,4) grid (4,6);
\draw[help lines] (1,6) grid (6,8);
\draw[help lines] (1,8) grid (10,11);

\draw [fill] (3,1) circle [radius=0.1];
\draw [fill] (4,4) circle [radius=0.1];
\draw [fill] (6,6) circle [radius=0.1];
\draw [fill] (10,8) circle [radius=0.1];

\node at (4.2,1) {\small{$(11,3)$}};
\node at (5,4) {\small{$(8,4)$}};
\node at (7,6) {\small{$(6,6)$}};
\node at (11.2,8) {\small{$(4,10)$}};

\draw[red, ultra thick] (1,6)--(6,6)--(6,8)--(10,8)--(10,11)--(1,11)--(1,6);

\draw[blue, ultra thick]  (4,5) circle [radius=0.1];
\draw[blue, ultra thick]  (3,6) circle [radius=0.1];
\draw[blue, ultra thick]  (2,7) circle [radius=0.1];
\draw[blue, ultra thick]  (1,8) circle [radius=0.1];
\draw[blue, ultra thick]  (3,5) circle [radius=0.1];
\draw[blue, ultra thick]  (2,6) circle [radius=0.1];
\draw[blue, ultra thick]  (1,7) circle [radius=0.1];
\draw[blue, ultra thick]  (2,5) circle [radius=0.1];
\draw[blue, ultra thick]  (1,6) circle [radius=0.1];
\draw[blue, ultra thick]  (1,5) circle [radius=0.1];
\end{tikzpicture}
 \end{center}  
\caption{ \small  The red contour shows the part of the polygonal  array that will be transformed after application of the gRSK on the two top rectangular arrays, i.e. $\grsk(W^{(2)} \bigsqcup W^{(1})$. The blue dots show the entries that will be modified by the insertion of the seventh row $(w_{(7,1)}, w_{(7,2)}w_{(7,3)}, w_{(7,4)})$ via application of transformation $R_7$. From this it is clear that $R_7$ does not see the full polygonal array and it would lead to the same output, as if only the 
subarray $((i,j)\,,\, 1\leq i\leq 7,\, 1\leq j\leq 4)$ existed.}\label{fig2}
\end{figure}
 
  \begin{figure}[t]
 \begin{center}
\begin{tikzpicture}[scale=.5]
\draw[help lines] (1,1) grid (3,4);
\draw[help lines] (1,4) grid (4,6);
\draw[help lines] (1,6) grid (6,8);
\draw[help lines] (1,8) grid (10,11);

\draw [fill] (3,1) circle [radius=0.1];
\draw [fill] (4,4) circle [radius=0.1];
\draw [fill] (6,6) circle [radius=0.1];
\draw [fill] (10,8) circle [radius=0.1];

\node at (4.2,1) {\small{$(11,3)$}};
\node at (5,4) {\small{$(8,4)$}};
\node at (7,6) {\small{$(6,6)$}};
\node at (11.2,8) {\small{$(4,10)$}};

\draw[very thick] (1,11)--(1,8)--(2,8)--(2,7)--(3,7)--(3,6)--(4,6);
\draw[very thick] (2,11)--(2,9)--(5,9)--(5,6);
\draw[very thick] (3,11)--(4,11)--(4,10)--(6,10)--(6,6);

\end{tikzpicture}
 \end{center}  
\caption{ \small  This figure shows three non intersecting, down-right paths in $\Pi^{(3)}_{6,6}$. The partition function corresponding to this ensemble equals
$t_{6,6}t_{5,5}t_{4,4}$, where $(t_{ij})=\grsk(\bigsqcup_{\ell=1}^4 W^{(\ell)})$
}\label{fig:poly_path}
\end{figure}
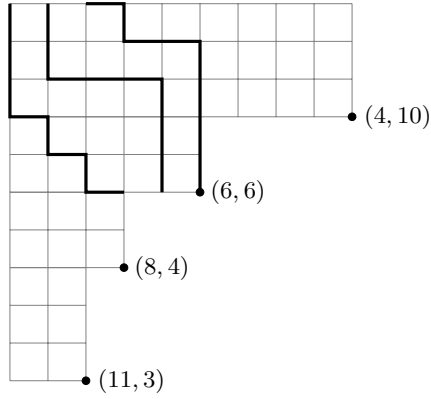

 The next proposition summarises the fundamental bijective properties of gRSK on polygonal arrays and it is the analogue of Theorem \ref{thm:OSZ}. 
\begin{proposition}\label{polygoRSK}
Consider a polygonal array $W=\bigsqcup_{\ell=1}^k W^{(\ell)}$ with $W^{(\ell)}=(w_{ij} \,,\, 1\leq j\leq n_\ell\,,\, m_{\ell-1}< i\leq m_{\ell})$
and $T=(t_{ij},\, (i,j)\in ind(W))=\grsk(W)$, where $ind(W)$ is the index set of the polygonal array $W$ defined in \eqref{index}. Then
\begin{itemize}
\item[{\bf 1.}]  Let $\Pi^{(r)}_{m,n}$ be the set of r-tuples of non intersecting down-right paths starting from points $(1,1),(1,2),...,(1,r)$ and ending at
points $(m, n-r+1),...,(m,n)$, respectively, with $(m,n)\in\{(m_\ell,n_\ell)\colon \ell=1,...,k\}$ and $r=m\wedge n$ (see Figure \ref{fig:poly_path}). Then 
\begin{align*}
t_{m-r+1,n-r+1}\dotsm t_{m-1,n-1} t_{m,n} 
= \sum_{(\pi_1,\ldots,\pi_r)\in\Pi^{(r)}_{m,n}} \, \prod_{(i,j)\in \pi_1\cup\cdots\cup\pi_r} w_{ij},
\end{align*}
and in particular 
\begin{align*}
 t_{m_\ell,n_\ell} 
= \sum_{\pi\in\Pi^{(1)}_{m_\ell,n_\ell}} \, \prod_{(i,j)\in \pi} w_{ij}, \qquad \text{for} \quad \ell=1,...,k,
\end{align*}
are the polymer partition functions $Z_{m_\ell,n_\ell}$.
\item[{\bf 2.}] Setting $t_{ij}=0$, whenever $(i,j)$ does not belong in the index set of the polygonal array, we have
\begin{align*}
\sum_{(i,j)\in ind(W)}\frac{1}{w_{ij}}=\mathcal{E}(T):=\frac{1}{t_{11}}+\sum_{(i,j)\in ind(W)}\frac{t_{i-1,j}+t_{i,j-1}}{t_{ij}}.
\end{align*}
\item[{\bf 3.}]
The transformation 
\begin{align*}
(\log w_{ij}\, ,\, (i,j)\in \text{ind}(W)) \mapsto (\log t_{ij}\, ,\, (i,j)\in \text{ind}(W)),
\end{align*}
has Jacobian equal to $\pm 1$.
\end{itemize}
\end{proposition}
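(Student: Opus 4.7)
My plan is to proceed by induction on $k$, the number of rectangular pieces; the base case $k=1$ reduces to Theorem \ref{thm:OSZ}. For the inductive step, assume the proposition for $W_{k-1}:=\bigsqcup_{\ell=1}^{k-1}W^{(\ell)}$ with output $T^{(k-1)}=\grsk(W_{k-1})$; the new output $T$ is obtained from $W^{(k)}\bigsqcup T^{(k-1)}$ by applying $R_{m_{k-1}+1},\dots,R_{m_k}$ with column width $n_k$. The first step is to record the locality property of these maps: from \eqref{abcd}, $l_{ij}$ only modifies the entries at $(i-1,j-1)$ and $(i,j)$, which share the same value of $i-j$, so combined with the decomposition $R_a=\rho^a_{n_k}\circ\cdots\circ\rho^a_1$ and \eqref{rodef}, the step-$k$ operations alter only entries $(i,j)$ with $i-j\geq m_{k-1}+1-n_k$, as illustrated in Figure \ref{fig2}.

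Using this, for $\ell<k$ the corner $(m_\ell,n_\ell)$ satisfies $m_\ell-n_\ell\leq m_{k-1}-n_{k-1}<m_{k-1}+1-n_k$ (by the strict decrease of the $n_\ell$'s) and is therefore fixed in step $k$; parts 1--3 at such corners follow from the inductive hypothesis applied to the sub-polygon $\bigsqcup_{\ell'\leq\ell}W^{(\ell')}$, since every down-right path in $W$ terminating at $(m_\ell,n_\ell)$ stays inside this sub-polygon and its index set contains $\{1,\dots,m_\ell\}\times\{1,\dots,n_\ell\}$. For the new corner $(m_k,n_k)$, I would introduce the rectangular $m_k\times n_k$ matrix $V^{(k)}$ formed by the first $n_k$ columns of the first $m_k$ rows of $W$; every down-right path from $(1,1)$ to $(m_k,n_k)$ in $W$ is supported in $V^{(k)}$, so Theorem \ref{thm:OSZ} applied to $V^{(k)}$ reduces part 1 at $(m_k,n_k)$ to proving $t_{m_k,n_k}=[\grsk(V^{(k)})]_{m_k,n_k}$. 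My approach to this identity uses the commutation relation $l_{ij}\circ l_{i'j'}=l_{i'j'}\circ l_{ij}$ for $|i-i'|+|j-j'|>2$ from Section \ref{section:localmoves}: the local moves executed by the rectangular gRSK on $V^{(k)}$ can, up to reorderings through these commutations, be separated into those executed by the polygonal gRSK on $W$ plus extra moves $l_{ij}$ with $j>n_k$ coming from the processing of the wider earlier pieces; the latter can be pushed past the step-$k$ moves without affecting the output at $(m_k,n_k)$. Parts 2 and 3 at the new corner and at the newly created entries then follow from a second application of Theorem \ref{thm:OSZ} to $V^{(k)}$ combined with the inductive hypothesis; in particular, the log-Jacobian is additive under composition of local moves, each of which contributes $\pm 1$ by direct inspection of \eqref{abcd}, and the energy identity telescopes across the steps $\ell=1,\dots,k$ once the step-$k$ incremental contribution is identified with that of Theorem \ref{thm:OSZ} applied to $V^{(k)}$.

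The main obstacle is the reordering argument underlying part 1 at $(m_k,n_k)$. As one readily verifies on a small example (such as a $2\times 2$ top piece followed by a single entry in the next row), the first $n_k$ columns of $T^{(k-1)}$ do \emph{not} coincide entrywise with $\grsk$ applied to the first $n_k$ columns of the top $m_{k-1}$ rows of $W$; thus the desired identity cannot be obtained by a naive comparison of intermediate states. What has to be shown instead is that, while the non-corner entries of these wider intermediate states legitimately differ, the particular subsequence of local moves that ultimately feeds the value of $t_{m_k,n_k}$ is invariant under the admissible reorderings dictated by the commutation relation. This requires careful combinatorial bookkeeping that leverages both the locality observation above and the far-commutativity of local moves, and is the step to which the bulk of the proof would be devoted.
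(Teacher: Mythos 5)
Your overall architecture coincides with the paper's: induction on $k$, the observation that the step-$k$ maps only act on entries with $i-j\geq m_{k-1}+1-n_k$, the treatment of the earlier corners via the inductive hypothesis, and the reduction of the new corner to Theorem \ref{thm:OSZ} applied to the rectangle $V^{(k)}=(w_{ij}\colon i\leq m_k,\ j\leq n_k)$. However, you leave the crux unproven — you explicitly defer ``the step to which the bulk of the proof would be devoted'' — and the tool you propose for it is not the right one. First, a small point: Part {\bf 1} at $(m_k,n_k)$ with $r>1$ requires that the \emph{entire} diagonal $t_{m_k-r+1,n_k-r+1},\dots,t_{m_k,n_k}$ agree with the corresponding entries of $\grsk(V^{(k)})$, not only the corner entry, so your stated reduction is too weak. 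More importantly, far-commutativity $l_{ij}\circ l_{i'j'}=l_{i'j'}\circ l_{ij}$ for $|i-i'|+|j-j'|>2$ cannot by itself deliver this identity: the polygonal $\grsk$ on $W$ and the rectangular $\grsk$ on $V^{(k)}$ are not two reorderings of the same multiset of local moves — the former applies strictly more moves (all $l_{ij}$ with $j>n_k$), and these do not far-commute with their column-$n_k$ neighbours, so they cannot be ``pushed past'' the step-$k$ moves.

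The missing idea is the one the paper actually uses, and it is a strengthening of the locality you already noted: each single-entry insertion $\rho^i_j$ of \eqref{rodef} (equivalently each Bender--Knuth move $b_{ij}$ of \eqref{bk1}) \emph{writes} only to entries on the north-west diagonal $\{i'-j'=i-j\}$ through $(i,j)$. Every input $w_{ij}$ with $j>n_k$ has $i\leq m_{k-1}$, hence lies on a diagonal $i-j\leq m_{k-1}-n_k-1$, so its insertion never writes to the diagonals $i-j>m_{k-1}-n_k$; combined with the order in which the $\rho^i_j$ are performed (so that the values read from the adjacent diagonals are themselves unaffected at the relevant times), one concludes that the entries on the diagonals $i-j>m_{k-1}-n_k$ — in particular the whole diagonal through $(m_k,n_k)$ — take the same values whether or not the columns $j>n_k$ are present. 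This is exactly the identification with $\grsk(V^{(k)})$ you need, and it is the geometric lift of the classical fact that restricting RSK to the sub-alphabet $\{1,\dots,n_k\}$ commutes with row insertion. Your counterexample correctly shows that entries strictly above this diagonal do differ, but without the diagonal-support argument your proof of Part {\bf 1} at the new corner does not close. (Your treatments of Parts {\bf 2} and {\bf 3} are fine and match the paper, which handles Part {\bf 3} by volume preservation of the local moves and defers the energy identity to the argument of Proposition \ref{prop:PNG_energy}.)
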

\begin{proof}
Part {\bf 3.} follows immediately from the volume preserving properties of the local moves. The proof of part {\bf 2.} has minor modifications of the
proof of part {\bf 3.} of Theorem \ref{thm:OSZ}. Since a similar property holds for the geometric PNG, whose proof will require slightly more modifications, we prefer
to defer the details till Proposition \ref{prop:PNG_energy}. 

The proof of Part {\bf 1.} proceeds by induction in $k$. For $k=1$ this is the same statement as that of Theorem \ref{thm:OSZ}, Part {\bf 1}. 
Assume, now, that the statement is valid for $k-1$, that is,
 \[
 T^{(k-1)}:=\grsk\Big(\bigsqcup_{\ell=1}^{k-1} W^{(\ell)}\Big) 
\]  
satisfies the statement of Part {\bf 1}. Let
\[T=T^{(k)}:=
 R_{m_{k}}\circ R_{m_{k}-1}\circ \cdots\circ R_{m_{k-1}+1} \big(W^{(k)} \, \bigsqcup \, T^{(k-1)} \big). 
\]
Observe that a transformation 
 $R_a$ acts only on the entries $(i,j)$ with $i-j\geq a-j_*(a)$, where $j_*(a)=\max\{j\colon (a,j) \in ind(W) \}$. This means that when $R_a$ inserts a line 
 $(w_{aj}, \, 1\leq j\leq j_*(a))$, then it will act as if there was only a rectangular pattern $(w_{i,j},\, 1\leq a, \,1\leq j\leq j_*(a) \,)$, ignoring the rest of the polygonal pattern.
 This, in particular, holds for $a=m_{k-1}+1,...,m_k$.
 Moreover, the insertions of $(w_{ij})$ with $j > n_k$ (that have taken place before the insertions $R_a$ for $a=m_{k-1}+1,...,m_k$) 
 will not have an effect on the entries
  $(t_{ij}\colon i-j>m_{k-1}-n_{k})$ of $T$ (see also Figure \ref{fig2}). 
Both of these statements can be seen as a consequence of the Bender-Knuth transformations \eqref{bk1}, \eqref{bk}, which imply that when an entry $w_{ij}$ is inserted, 
it leads only to updates of the
entries of the array along the north-west diagonal starting from $(i,j)$.
   
  The claim now follows by combining the induction hypothesis with the statement of Theorem \ref{thm:OSZ}, Part {\bf 1}, 
  applied to the rectangular array $(w_{ij},\, 1\leq i \leq m_k, \,1 \leq j \leq n_k)$.
\end{proof}

It will be useful, for later purposes, to state separately a proposition, which defines and identifies the type, cf. \eqref{type},
of general polygonal patterns via the gRSK:
\begin{proposition}\label{grsk_type}
Let a polygonal array $W=\bigsqcup_{\ell=1}^k W^{(\ell)}$ with $W^{(\ell)}=(w_{ij} \,,\, 1\leq j\leq n_\ell\,,\, m_{\ell-1}< i\leq m_{\ell})$
and $T=\grsk(W)$. We have that
\begin{itemize}
\item[$\bullet$]
For every $j$, we set $ i_*(j):=\max\{i\colon (i,j)\in ind(W)\}  $. Then
\begin{align}\label{prop_grsk_type_eq1}
\prod_{i=1}^{i_*(j)} w_{ij}=\tau_{i_*(j),j} := 
\frac{ \prod_{r=0}^{i_*(j)\wedge j -1}\, t_{i_*(j)-r,j-r} }{ \prod_{r=0}^{i_*(j)\wedge (j-1) -1} \,t_{i_*(j)-r,j-1-r} }.
\end{align}
\item[$\bullet$]
For every $i$, we set $ j_*(i):=\max\{j\colon (i,j)\in ind(W)\} $. Then
\begin{align}\label{prop_grsk_type_eq2}
\prod_{j=1}^{j_*(i)} w_{ij}=\tau_{i,j_*(i)} := 
\frac{ \prod_{r=0}^{j_*(i)\wedge i -1}\, t_{i-r,j_*(i)-r} }{ \prod_{r=0}^{j_*(i)\wedge (i-1) -1} \,t_{i-1-r,j_*(i)-r} }.
\end{align}
\end{itemize}
\end{proposition}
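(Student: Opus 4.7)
The plan is to reduce both identities to the rectangular type identity of \cite[Corollary~3.3]{OSZ14}. The key point is that, thanks to the Bender--Knuth locality used in the proof of Part~1 of Proposition~\ref{polygoRSK}, the diagonal products entering the definition of $\tau$ depend only on a suitable rectangular sub-array of $W$.

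For \eqref{prop_grsk_type_eq1}, I would fix $j$, set $m := i_*(j)$, and introduce the rectangular sub-array
\begin{align*}
W_{m,j} := (w_{i'j'})_{1\leq i' \leq m,\, 1\leq j' \leq j},
\end{align*}
which is contained in $W$ because the polygonal staircase makes $i_*$ non-increasing. Let $\widetilde T := \grsk(W_{m,j})$ denote the associated rectangular gRSK output. The first step is to establish the equality of target diagonal products
\begin{align*}
\prod_{r=0}^{m\wedge j - 1} t_{m-r,j-r} = \prod_{r=0}^{m\wedge j -1}\widetilde t_{m-r,j-r}, \qquad \prod_{r=0}^{m\wedge (j-1)-1}t_{m-r,j-1-r} = \prod_{r=0}^{m\wedge (j-1)-1}\widetilde t_{m-r,j-1-r}.
\end{align*}
This will follow from the Bender--Knuth property that the insertion of an entry $w_{i'j'}$ only modifies entries on the diagonal $\{a - b = i' - j'\}$: every $(i', j') \in W \setminus W_{m,j}$ either satisfies $i' \leq m$ and $j' > j$ (forcing $i' - j' < m - j$), or satisfies $i' > m$ and $j' < j$ (forcing $i' - j' \geq m - j + 2$, since such a point must lie in the strictly narrower $(\ell{+}1)$-st rectangle). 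In either case its insertion cannot touch either of the two diagonals $\{a-b = m - j\}$ and $\{a-b = m-j+1\}$ on which the entries in the products above lie.

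Taking ratios then gives $\tau_{m,j}(T) = \tau_{m,j}(\widetilde T)$, and the rectangular type identity $\tau_{m,j}(\widetilde T) = \prod_{i'=1}^{m} w_{i'j}$ -- which one reads off from \cite[Corollary~3.3]{OSZ14} by matching the exponents of $w_{i'j}$ carrying the parameter $\hat\alpha_j$ in the log-gamma pushforward formula displayed just above -- completes the proof of \eqref{prop_grsk_type_eq1}. The identity \eqref{prop_grsk_type_eq2} follows from the entirely symmetric argument with rows and columns interchanged: fix $i$, set $n := j_*(i)$, introduce the sub-rectangle $(w_{i'j'})_{1\leq i' \leq i,\,1\leq j' \leq n}$ (contained in $W$ because $j_*$ is non-increasing), and localise the diagonal products defining $\tau_{i,n}$ along the diagonals $\{a-b = i-n\}$ and $\{a-b = i-n-1\}$.

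The principal step requiring care is the locality claim in the second paragraph: one must verify that no insertion of an entry outside the relevant sub-rectangle can propagate onto the two diagonals that control the numerator and denominator of $\tau$. Once this diagonal bookkeeping is in place, the reduction to the rectangular case of \cite{OSZ14} is immediate in both cases.
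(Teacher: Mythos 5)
This is correct and essentially the paper's own argument: both proofs reduce to the rectangular type identity of \cite{OSZ14} via the locality of the local moves (an insertion only updates entries along the north-west diagonal through the inserted entry), so the two diagonals defining $\tau_{i_*(j),j}$ are insensitive to the entries of $W$ outside the relevant sub-rectangle. The only organisational difference is that the paper proceeds by induction on the number $k$ of stacked rectangles and reduces column $j$ to the rectangle $[1,m_\ell]\times[1,n_\ell]$ with $i_*(j)=m_\ell$, rather than to $[1,i_*(j)]\times[1,j]$ as you do; your per-column reduction, with the explicit check that outside entries lie on diagonals $i'-j'<m-j$ or $i'-j'\geq m-j+2$, is an equally valid instance of the same idea.
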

\begin{proof}
We will prove \eqref{prop_grsk_type_eq1}, the proof of \eqref{prop_grsk_type_eq2} being similar.
When $k=1$ the polygonal array is a rectangular array and the claim follows from the properties of the standard gRSK, cf. point {\bf 1.} of 
Theorem \ref{thm:OSZ}. Assume, now, that $k=2$. Since after the insertion of $W^{(2)}$ the entries 
$(t_{ij})$ of $\grsk(W^{(1)} \bigsqcup W^{(2)} )$ are identical to the corresponding ones of $\grsk(W^{(1)})$, 
  claim \eqref{prop_grsk_type_eq1} hold immediately 
  for $n_2<j\leq n_1$. For $1\leq j \leq n_2$ the entries $(t_{ij})$ involved in the right hand side of
  \eqref{prop_grsk_type_eq1} have indices satisfying $i-j\geq m_2-n_2$ but as already mentioned the values of these entries do not depend on
  $(w_{ij})$ with $j>n_2$, which means that these entries would be identical to the entries of 
  $\grsk\big((w_{ij}\colon 1\leq i \leq m_2, 1\leq j \leq n_2)\big)$. Bearing this in mind, the validity of \eqref{prop_grsk_type_eq1}
  for $1\leq j \leq n_2$ is a consequence of the properties of standard gRSK (Theorem \ref{thm:OSZ} point {\bf 1.}).
  The same argument continues when $k\geq 3$.
\end{proof}
\subsection{Multipoint distribution of the log-gamma polymer}\label{sec:multipoint}
We are now ready to use the formulation of gRSK on polygonal arrays, in order to obtain expressions for the multipoint distribution of the log-gamma polymer. 
The first step in this direction is to determine the push forward measure under gRSK of the log-gamma measure on a polygonal array
 $W=\bigsqcup_{\ell=1}^kW^{(\ell)}$ with $W^{(\ell)} = (w_{ij} \,\colon\, m_{\ell-1} < i \leq m_\ell, 1\leq j \leq n_{\ell}  )$ with $(m_\ell,n_\ell)_{ \ell=1,2,...,k}$ such that $m_{\ell} > m_{\ell-1}$ and $n_{\ell-1}>n_\ell$. By convention $m_0=n_0=0$. To do this, we first recall the definition of an $(\ga,\hat\ga)$-log-gamma measure on such a 
 polygonal array $W$:
 \begin{align*}
 \bbP(\dd W):=\prod_{(i,j)\in ind(W)}\frac{1}{\Gamma(\ga_i+\hat\ga_j)} w_{ij}^{-\ga_i-\hat\ga_j} e^{-1/w_{ij}} \,\,\frac{\dd w_{ij}}{w_{ij}}.
 \end{align*}
  Rewrite the $(\ga,\hat\ga)$-log-gamma measure as
 \begin{align*}
 \bbP(\dd W)&=\prod_{(i,j)\in \,ind(W)}\frac{1}{\Gamma(\ga_i+\hat\ga_j)}
 \prod_{j=1}^{n_1}\Big(\prod_{i=1}^{i_*(j)} w_{ij}\Big)^{-\hat\ga_j}  \prod_{i=1}^{m_k}\Big(\prod_{j=1}^{j_*(i)} w_{ij}\Big)^{-\ga_i}\\
  &\qquad\qquad\times \exp\Big(-\sum_{(i,j)\in \,ind(W)}1/w_{ij}\Big) \,\,\prod_{(i,j)\in\,ind(W)}\frac{\dd w_{ij}}{w_{ij}}.
 \end{align*}
 By Proposition \ref{polygoRSK} and Proposition \ref{grsk_type} and by changing variable 
 $\big(w_{i,j}\colon (i,j)\in ind(W)\big) \mapsto \big(t_{i,j}\colon (i,j)\in ind(W)\big)=\grsk(W)$
  we have that the push forward measure is
  \begin{align}\label{RSK-1}
  \bbP\circ(\grsk)^{-1}(\dd T)&= \prod_{(i,j)\in \,ind(W)}\frac{1}{\Gamma(\ga_i+\hat\ga_j)}\,\,
  \prod_{j=1}^{n_1} \Big( \tau_{i_*(j),j} \Big)^{-\hat\ga_j} 
  \,\prod_{i=1}^{m_k}\Big( \tau_{i,j_*(i)}  \Big)^{-\ga_i}\notag\\
  &\qquad\qquad\times \exp\Big(-\cE(T)\Big) \,\,\prod_{(i,j)\in\,ind(W)}\frac{\dd t_{ij}}{t_{ij}}.
  \end{align}
  We now arrive to our first formula for the joint law of the partition functions:
  \begin{theorem}[k-point distribution]\label{thm:k-point}
  Let $(m_1,n_1),...,(m_k,n_k)$ be $k$ pairs of positive integers such that $m_{\ell}>m_{\ell-1}$ and $n_{\ell-1}>n_{\ell}$ and consider the point-to-point partition functions $Z_{(m_i,n_i)},\,i=1,...,k$ of an $(\ga,\hat\ga)$-log-gamma polymer. The Laplace transform of the joint distribution is given by
  \begin{align*}
  \bbE\Big[e^{-u_1Z_{(m_1,n_1)}-\cdots -u_k Z_{(m_k,n_k)}}\Big] =
  \int_{(\bbR_{>0})^{| ind(W)|}} e^{-u_1 t_{m_1,n_1}-\cdots -u_k t_{m_k,n_k}} \,\,\bbP\circ(\grsk)^{-1}(\dd T),
  \end{align*}
  where $\bbP\circ(\grsk)^{-1}$ is the push forward measure under gRSK of an  $(\ga,\hat\ga)$-log-gamma measure on the
   polygonal array $W:=W_{(m_1,n_1),...,(m_k,n_k)}$, determined by the {\it corners}
  $(m_1,n_1),...,(m_k,n_k)$, see \eqref{RSK-1}.
  \end{theorem}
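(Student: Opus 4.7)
The plan is to reduce the claim to a direct change-of-variables computation, using the three parts of Proposition \ref{polygoRSK} together with the explicit push-forward density \eqref{RSK-1}. First, I would unpack the expectation on the left-hand side as
\begin{align*}
\bbE\Big[e^{-u_1 Z_{(m_1,n_1)}-\cdots -u_k Z_{(m_k,n_k)}}\Big]
= \int_{(\bbR_{>0})^{|ind(W)|}}
   \exp\!\Big(-\sum_{\ell=1}^k u_\ell Z_{(m_\ell,n_\ell)}(W)\Big)\,\bbP(\dd W),
\end{align*}
where $Z_{(m_\ell,n_\ell)}(W)$ denotes the polymer partition function computed from the weights on the polygonal array $W=\bigsqcup_{\ell=1}^k W^{(\ell)}$ with corners $(m_\ell,n_\ell)$.

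Next, by Part {\bf 1.} of Proposition \ref{polygoRSK}, the corner entries of $T=\grsk(W)$ encode precisely these partition functions: $Z_{(m_\ell,n_\ell)}(W)=t_{m_\ell,n_\ell}$ for each $\ell=1,\ldots,k$. This step is the essential content, and it uses crucially that each $(m_\ell,n_\ell)$ lies on the south-east boundary of $ind(W)$, so that Part {\bf 1.} applies at every such corner with $r=1$.

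I would then perform the change of variables $W\mapsto T=\grsk(W)$. By construction, gRSK on polygonal arrays is a composition of the invertible local moves $l_{ij}$, so it is a bijection $(\bbR_{>0})^{|ind(W)|}\to (\bbR_{>0})^{|ind(W)|}$. Moreover, Part {\bf 3.} of Proposition \ref{polygoRSK} asserts that the Jacobian in logarithmic coordinates equals $\pm 1$, whence
\[
\prod_{(i,j)\in\,ind(W)}\frac{\dd w_{ij}}{w_{ij}}=\prod_{(i,j)\in\,ind(W)}\frac{\dd t_{ij}}{t_{ij}}.
\]
Substituting and invoking the corner identification from Part {\bf 1.} then yields
\begin{align*}
\bbE\Big[e^{-u_1 Z_{(m_1,n_1)}-\cdots -u_k Z_{(m_k,n_k)}}\Big]
= \int_{(\bbR_{>0})^{|ind(W)|}}
   e^{-u_1 t_{m_1,n_1}-\cdots -u_k t_{m_k,n_k}} \,\bbP\circ(\grsk)^{-1}(\dd T),
\end{align*}
which is the stated formula. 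The explicit form of the push-forward measure is already recorded in \eqref{RSK-1}, whose derivation in turn combines the type identities of Proposition \ref{grsk_type} (to rewrite the factors $\prod_i w_{ij}^{-\hat\ga_j}$ and $\prod_j w_{ij}^{-\ga_i}$ column-wise and row-wise in terms of $\tau_{i_*(j),j}$ and $\tau_{i,j_*(i)}$) with the energy identity of Part {\bf 2.} (to rewrite $\sum 1/w_{ij}$ as $\mathcal{E}(T)$).

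There is no serious obstacle at this stage: the three ingredients---partition functions as corner entries, unit logarithmic Jacobian, and the explicit density \eqref{RSK-1}---are all supplied by Propositions \ref{polygoRSK} and \ref{grsk_type}. The content of the theorem is really the repackaging of these facts into a form suitable for subsequent integral manipulations, in particular the Plancherel/Whittaker analysis that will lead to the contour integral formula \eqref{eq:2p_intro}.
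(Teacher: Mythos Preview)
Your proposal is correct and follows essentially the same approach as the paper: identify the corner entries $t_{m_\ell,n_\ell}$ with the partition functions $Z_{(m_\ell,n_\ell)}$ via Part~{\bf 1.} of Proposition~\ref{polygoRSK}, and then invoke the push-forward measure \eqref{RSK-1}. The paper's proof is simply a terser statement of exactly this, omitting the explicit change-of-variables bookkeeping that you have spelled out.
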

  \begin{proof}
  The proof follows immediately from the fact that if $T=(t_{ij})=\grsk\big(W_{(m_1,n_1),...,(m_k,n_k)}\big)$, then by Part {\bf 1.} of Proposition \ref{polygoRSK}
  \[
  ( t_{m_1,n_1},..., t_{m_k,n_k} ) = (Z_{(m_1,n_1)},...,Z_{(m_k,n_k)}),
  \]
   and the identification of the push forward measure $\bbP\circ(\grsk)^{-1}(\dd T)$ as in \eqref{RSK-1}.
  \end{proof}
 We will now use the Plancherel Theorem \ref{thm:Plancerel} for Whittaker functions 
 to rewrite the above integral as a contour integral in the case of $k=2$ . We first introduce some notation
  \begin{definition}\label{def:F}
  For a sequence of complex numbers $a=(a_1, a_2,...)$ and integers $p,r$ we will denote the function
\begin{align*}
\sfF_{p,r}^{\,a}(w):=\prod_{p+1\leq i \leq r} \Gamma(w+a_j),\qquad \text{for} \,\,w\in\mathbb{C}.
\end{align*}
When $p=0$ we will simplify the notation by denoting the corresponding function by $\sfF_{r}^{\,a}(w)$.
  \end{definition}
  We will also make use of the following theorem proved in \cite{COSZ14}, \cite{OSZ14}, which expresses the Laplace transform of a single partition function as a
   contour integral
  \begin{theorem}\label{thm:one_point_Laplace}
  	Let $Z_{(m,n)}$ be the partition function from $(1,1)$ to $(m,n)$, with $m\geq n$, of an $(\ga,\hat\ga)$-log-gamma polymer such that $\hat\ga<0$ and $ \ga >0$. Its Laplace transform is given in terms of a contour integral as
  	\begin{align*}
  	\bbE\Big[e^{-u Z_{(m,n)} }\Big] 
  	&=  \int_{(\iota \bbR)^{n} } \dd \mu  
  	\,\,s_{n}(\mu)  \prod_{1\leq j,j'\leq n} \Gamma(-\hat\ga_{j'}+\mu_j) \prod_{j=1}^{n}\frac{u^{-\mu_j} \,\sfF^{\,\ga}_{m}(\mu_j)}{u^{-{\hat\ga_j}} \,\sfF^{\,\ga}_{m}(\hat\ga_j)}.
  	\end{align*}
  \end{theorem}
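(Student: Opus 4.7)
My strategy is to combine the $g$RSK push-forward from \eqref{RSK-1} (in the rectangular case $k=1$) with the Whittaker Plancherel isometry (Theorem \ref{thm:Plancerel}) and Stade's identity (Theorem \ref{thm:Stade}).

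First, I would specialize Proposition \ref{polygoRSK} to the single $m\times n$ rectangular input $W=(w_{ij})$ and, using $t_{m,n}=Z_{(m,n)}$, write
\[
\bbE\bigl[e^{-u Z_{(m,n)}}\bigr] = \int e^{-u t_{m,n}}\,\bbP\circ\grsk^{-1}(\dd T),
\]
with density given by \eqref{RSK-1}. I would then integrate out the interior variables of $T$ by grouping the $\hat\ga$-type factors $\prod_j \tau_{mj}^{-\hat\ga_j}$ with part of the energy $\cE(T)$ so that Givental's integral representation identifies them as a Whittaker function $\Psi^{(n)}_{-\hat\ga}(x)$ in the $n$ ``shape'' variables $x_j:=t_{m-n+j,\,j}$; the remaining $\ga$-type factors $\prod_i \tau_{in}^{-\ga_i}$ together with the rest of $\cE(T)$ assemble into a kernel $K_\ga(x)$ encoding the $m$ parameters $\ga_1,\dots,\ga_m$ and the $(m-n)$ ``extra rows''. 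The outcome is an identity of the form
\[
\bbE\bigl[e^{-uZ_{(m,n)}}\bigr] = \frac{1}{\prod_{i,j}\Gamma(\ga_i+\hat\ga_j)}\int_{(\bbR_{>0})^n} e^{-ux_n}\,\Psi^{(n)}_{-\hat\ga}(x)\,K_\ga(x)\prod_{j=1}^n\frac{\dd x_j}{x_j}.
\]

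Next, I would apply Plancherel (Theorem \ref{thm:Plancerel}) to this integral with $f(x)=e^{-ux_n}\Psi^{(n)}_{-\hat\ga}(x)$ and $g(x)=K_\ga(x)$. The Whittaker transform of $f$ is computed via Stade's identity, producing (after analytic continuation) the factor $u^{\sum(\hat\ga_j-\mu_j)}\prod_{j,j'}\Gamma(-\hat\ga_{j'}+\mu_j)$. Iterating Stade along the $(m-n)$ additional rows carried by $K_\ga$, or equivalently identifying $K_\ga$ as a rectangular-Whittaker kernel whose Whittaker transform is explicit, yields $\prod_{j=1}^n\sfF^\ga_m(\mu_j)$ for the transform of $g$, together with the normalization $\prod_{j=1}^n\sfF^\ga_m(\hat\ga_j)$ that conveniently cancels part of the prefactor $\prod_{i,j}\Gamma(\ga_i+\hat\ga_j)$. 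Reassembling the two transforms along $(\iota\bbR)^n$ with the Sklyanin measure $s_n(\mu)$ produces exactly the claimed contour integral.

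The main technical obstacle will be the analytic continuation needed to apply Stade's identity: its hypothesis $\Re(\lambda_i+\nu_j)>0$ is not directly satisfied on $(\iota\bbR)^n$ when $\hat\ga<0$, so the identity must first be used in a shifted region (say $\Re\mu_j>\max_i|\hat\ga_i|$) and the contour then deformed back to $(\iota\bbR)^n$, verifying that no residues of the factors $\Gamma(-\hat\ga_{j'}+\mu_j)$ or $\sfF^\ga_m(\mu_j)$ are crossed, which follows from $\ga>0$ together with $\ga_i+\hat\ga_j>0$. One must also check that $f$ and $g$ lie in the $L^2$ space required by Plancherel; here the gamma-function asymptotics \eqref{gamma_asympto} provide exponential decay of the transforms along the imaginary direction, matched against the Sklyanin density $s_n(\mu)$ to ensure integrability of the resulting $\mu$-integrand.
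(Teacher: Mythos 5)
Your proposal is correct and follows essentially the route of \cite{COSZ14}, \cite{OSZ14}, which is where the present paper sources this theorem (it is quoted, not reproved, here): push the log-gamma measure forward under gRSK, integrate out the interior variables to obtain a $GL_n$-Whittaker function in $\hat\ga$ paired against a rectangular Whittaker kernel carrying the $\ga$'s, then apply the Plancherel theorem together with Stade's identity and its rectangular extension (Corollary 3.7 of \cite{OSZ14}). One small correction: the hypothesis $\Re(\nu_i+\lambda_j)>0$ of Stade's identity \emph{is} satisfied directly on $(\iota\bbR)^n$, since with $\mu_j$ purely imaginary it reduces to $-\hat\ga_{j'}>0$, which is precisely the assumption $\hat\ga<0$ in the statement — that is why the assumption is there, and no preliminary shift is needed (though the deformation you describe is harmless, as the poles of $\Gamma(-\hat\ga_{j'}+\mu_j)$ and of $\sfF^{\,\ga}_{m}(\mu_j)$ all lie in the open left half-plane by $\hat\ga<0$ and $\ga>0$).
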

  
  \begin{remark}\rm{
   If the parameters $\ga,\hat\ga$ are such that $\Re(\ga+\gd)>0$ and $\Re(-\hat\ga+\gd)>0$, for some $\gd \in \bbR$, then the above formula writes as
   \begin{align*}
 	\bbE\Big[e^{-u Z_{(m,n)} }\Big] 
 	&=  \int_{(\ell_{\delta})^{n} } \dd \mu  
 	\,\,s_{n}(\mu)  \prod_{1\leq j,j'\leq n} \Gamma(-\hat\ga_{j'}+\mu_j) \prod_{j=1}^{n}\frac{u^{-\mu_j} \,\sfF^{\,\ga}_{m}(\mu_j)}{u^{-{\ga_j}} \,\sfF^{\,\ga}_{m}(\hat\ga_j)}.
 	\end{align*}
	where $\ell_\gd$ is the contour along which the real part is constant and equal to $\gd$. }
	\end{remark}
 We can now state
  \begin{theorem}[2-point contour integral]\label{thm:2-point}
  Let $Z_{(m_1,n_1)}, Z_{(m_2,n_2)}$ be the partition functions from $(1,1)$ to $(m_1,n_1)$ and $(m_2,n_2)$, respectively, of an $(\ga,\hat\ga)$-log-gamma polymer. Assume that $m_1< m_2$ and $n_1> n_2$. Moreover, without loss of generality, assume that $m_1\leq n_1$.
  Let $\gamma/2>\delta>0$ arbitrary, such that the parameters 
   	$(\ga_i)_{i=1,2...}$ and $(\hat\ga_j)_{j=1,2...}$ satisfy $\vert \ga\vert <\delta$, $\vert \hat \ga-\gamma\vert <\delta$,  
	and let $\ell_{\delta}, \ell_{\delta+\gamma}$ be  vertical lines in the complex plane with real part equal to $\delta$ and $\delta+\gamma$, respectively.
  The joint Laplace transform is given in terms of a contour integral as
  \begin{itemize}
  \item[$\bullet$] in the case that $m_2\geq n_2 $, then
  \begin{align}\label{2p_formula}
 \bbE\Big[e^{-u_1 Z_{m_1,n_1}-u_2Z_{m_2,n_2}}\Big] 
 = &\int_{( \ell_\delta)^{m_1} } \dd \lambda
    \,\,s_{m_1}(\lambda)  \prod_{1\leq i,i'\leq m_1} \Gamma(-\ga_i+\gl_{i'}) \prod_{i=1}^{m_1}\frac{u_1^{-\gl_i}\, \sfF^{\,\hat\ga}_{n_2,n_1}(\gl_i)}{ u_1^{-\ga_i}\, \sfF^{\,\hat\ga}_{n_2,n_1} (\ga_i)} \notag\\
&\quad\times   \int_{(\ell_{\delta+\gamma})^{n_2} } \dd \mu  
    \,\,s_{n_2}(\mu)  \prod_{1\leq j,j'\leq n_2} \Gamma(-\hat\ga_j+\mu_{j'}) \prod_{j=1}^{n_2}\frac{u_2^{-\mu_j} \,\sfF^{\,\ga}_{m_1,m_2}(\mu_j)}{u_2^{-{\hat\ga_j}} \,\sfF^{\,\ga}_{m_1,m_2}(\hat\ga_j)} \notag\\
 &\qquad   \quad\times \prod_{\substack{1\leq i\leq m_1 \\ 1\leq j\leq n_2 }} \frac{\Gamma(\gl_i+\mu_j)}{\Gamma(\ga_i+\hat\ga_j)}\,,
  \end{align}
  \item[$\bullet$] in the case that $m_2 < n_2 $, then
   \begin{align}\label{2p_formulaB}
 \bbE\Big[e^{-u_1 Z_{m_1,n_1}-u_2Z_{m_2,n_2}}\Big] 
 = &\int_{(\ell_{\delta})^{m_1} } \dd \lambda
    \,\,s_{m_1}(\lambda)  \prod_{1\leq i,i'\leq m_1} \Gamma(-\ga_i+\gl_{i'}) \prod_{i=1}^{m_1}\frac{(u_1/u_2)^{-\gl_i}\, \sfF^{\,\hat\ga}_{n_2,n_1}(\gl_i)}{ (u_1/u_2)^{-\ga_i}\, \sfF^{\,\hat\ga}_{n_2,n_1} (\ga_i)} \notag\\
&\quad\times   \int_{(\ell_{\delta'})^{m_2} } \dd \mu  
    \,\,s_{m_2}(\mu)  
      \prod_{\begin{substack}{m_1+1\leq i\leq m_2\\ 1\leq i'\leq m_2}\end{substack}} 
    \Gamma(-\ga_i+\mu_{i'}) 
    \prod_{i=1}^{m_2}\frac{u_2^{-\mu_i} \,\sfF^{\,\hat\ga}_{n_2}(\mu_i)}{u_2^{-{\ga_i}} \,\sfF^{\,\hat\ga}_{n_2}(\ga_i)}   \notag\\
 &\qquad\quad\times
  \,\prod_{\begin{substack}{1\leq i\leq m_1\\ 1\leq i'\leq m_2}\end{substack}} 
    \Gamma(-\gl_i+\mu_{i'}) \,,
  \end{align}
  where the contours $\ell_\gd,\ell_{\gd'}$ have real parts $\gd,\gd'$, respectively, with $\gd'>\gd$.
  \end{itemize}
  \end{theorem}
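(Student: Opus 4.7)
The plan is to adapt the strategy used in \cite{COSZ14,OSZ14} for the one-point Laplace transform, combining Theorem \ref{thm:k-point} with the Plancherel theorem for Whittaker functions (Theorem \ref{thm:Plancerel}) and Stade's identity (Theorem \ref{thm:Stade}). By Theorem \ref{thm:k-point} the Laplace transform is an integral over the polygonal gRSK output $(t_{ij})_{(i,j)\in ind(W)}$ against the push-forward density \eqref{RSK-1}; this density consists of the type power-factors $\tau_{i_*(j),j}^{-\hat\ga_j}$ and $\tau_{i,j_*(i)}^{-\ga_i}$ (cf.\ Proposition \ref{grsk_type}), the energy $e^{-\cE(T)}$, the normalising gammas, and the Haar-like measure $\prod dt_{ij}/t_{ij}$. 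The objective is to integrate out most of the $t_{ij}$'s and be left with contour integrals in the Plancherel-dual variables $\lambda$ and $\mu$.

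Concretely, for $m_1\leq n_1$ and $m_2\geq n_2$ I view $ind(W)$ as the union of three sub-rectangles: the ``upper arm'' $[1,m_1]\times[n_2+1,n_1]$, the ``lower arm'' $[m_1+1,m_2]\times[1,n_2]$, and the ``overlap'' $[1,m_1]\times[1,n_2]$. After an appropriate grouping of variables, the $e^{-\cE(T)}\prod dt/t$ portion of the density on each sub-rectangle matches the integrand of Givental's integral formula for a Whittaker function. Integrating out the interior variables away from the two antidiagonals through $(m_1,n_1)$ and $(m_2,n_2)$ should yield a representation of the form
\begin{align*}
\int_{(\bbR_{>0})^{m_1}}\!\!dx \int_{(\bbR_{>0})^{n_2}}\!\!dy\; e^{-u_1 x_{m_1}-u_2 y_{n_2}}\,
\Psi^{(m_1)}_{-\ga_{[1,m_1]}}(x)\,\Psi^{(n_2)}_{-\hat\ga_{[1,n_2]}}(y)\cdot G(x,y),
\end{align*}
where $x,y$ are the variables on the two antidiagonals (carrying respectively the values $Z_{(m_1,n_1)}$ and $Z_{(m_2,n_2)}$, by Proposition \ref{polygoRSK}) and $G(x,y)$ bundles the contribution of the two arms (producing $\Psi^{(m_1)}_{-\hat\ga_{[n_2+1,n_1]}}(x)$ and $\Psi^{(n_2)}_{-\ga_{[m_1+1,m_2]}}(y)$) together with an overlap kernel that couples $x$ and $y$.

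Next I would apply Theorem \ref{thm:Plancerel} twice, expanding the two ``free'' Whittaker factors as contour integrals against the Sklyanin measures, over $\lambda\in\ell_\delta^{m_1}$ and $\mu\in\ell_{\delta+\gamma}^{n_2}$ respectively. The remaining spatial integrals, which all have the shape $\int e^{-u\,\cdot}\,\Psi_{-\nu}\Psi_{-\rho}\prod dx/x$, are closed by Stade's identity (Theorem \ref{thm:Stade}); this produces the factors $u_1^{-\lambda_i},u_2^{-\mu_j}$, the ``arm'' products $\sfF^{\hat\ga}_{n_2,n_1}(\lambda_i)$ and $\sfF^{\ga}_{m_1,m_2}(\mu_j)$ (together with their $\ga,\hat\ga$-counterparts from the normalisation), and the Plancherel cross-products $\prod_{i,i'}\Gamma(-\ga_i+\lambda_{i'})$ and $\prod_{j,j'}\Gamma(-\hat\ga_j+\mu_{j'})$. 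The residual overlap integration evaluates by a further Stade-like identity (now coupling an $m_1$-dimensional and an $n_2$-dimensional Whittaker) to $\prod_{i\leq m_1,\,j\leq n_2}\Gamma(\lambda_i+\mu_j)$; combined with the $\prod\Gamma(\ga_i+\hat\ga_j)^{-1}$ from the log-gamma normalisation on the overlap, this produces the cross-term in \eqref{2p_formula}.

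The main obstacle is twofold. First, pinning down the exact change of variables that reveals the Whittaker/Givental structure on the overlap rectangle is delicate, since both $Z_{(m_1,n_1)}$ and $Z_{(m_2,n_2)}$ exploit this region and one must carefully track how the $\ga$- and $\hat\ga$-indices redistribute across the three sub-rectangles. Second, one must verify $L^2$-integrability along the two contours to legitimise the Plancherel and Fubini swaps; this follows from the Gamma asymptotic \eqref{gamma_asympto}, which together with the hypotheses $|\ga|<\delta$ and $|\hat\ga-\gamma|<\delta$ secures the requisite exponential decay along $\ell_\delta$ and $\ell_{\delta+\gamma}$. The case $m_2<n_2$ is handled by an analogous but ``transposed'' decomposition of the lower rectangle, in which Plancherel is applied in an $m_2$-dimensional variable instead of an $n_2$-dimensional one, leading to formula \eqref{2p_formulaB}.
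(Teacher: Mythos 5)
Your overall frame (start from Theorem \ref{thm:k-point}, split the polygonal array along the antidiagonal through $(m_1,n_1)$, then use Plancherel and Stade) is the right one, and the first half of your argument coincides with the paper's: integrating out the variables with $i-j<m_1-n_1$ produces $\Psi^{(m_1)}_{\ga}$ on that antidiagonal, Plancherel in the $m_1$-dimensional variable introduces $\lambda$, and Stade's identity gives $u_1^{-\sum(\lambda_i-\ga_i)}\prod_{i,i'}\Gamma(-\ga_i+\lambda_{i'})$. The genuine gap is in how you close the other side. You propose to expand a second Whittaker function $\Psi^{(n_2)}_{-\hat\ga}$ on the $(m_2,n_2)$-antidiagonal by a second Plancherel and then evaluate the residual coupling by ``a further Stade-like identity coupling an $m_1$-dimensional and an $n_2$-dimensional Whittaker''. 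No such identity is available: Theorem \ref{thm:Stade} couples two Whittaker functions of \emph{equal} rank through the elementary kernel $e^{-rx_1}$, whereas your $G(x,y)$ is the integral over the entire middle strip of the polygonal array (all of the overlap rectangle together with both arms' type factors and energies), and the assertion that $\int G(x,y)\Psi^{(m_1)}_{-\lambda}(x)\Psi^{(n_2)}_{-\mu}(y)$ produces exactly $\prod_i \sfF^{\hat\ga}_{n_2,n_1}(\lambda_i)\prod_j\sfF^{\ga}_{m_1,m_2}(\mu_j)\prod_{i,j}\Gamma(\lambda_i+\mu_j)$ is essentially equivalent to the theorem you are trying to prove. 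The paper avoids ever needing this mixed-rank identity: after the single Plancherel in $\lambda$, it observes that the second factor, $\int e^{-u_2t_{m_2,n_2}}\Whitt_{\ga,\hat\ga}(t^{\nwarrow}_{m_1,n_1})\Psi^{(m_1)}_{\lambda}(t^{\nwarrow}_{m_1,n_1})$, reconstitutes the push-forward density of a $(\lambda_1,\dots,\lambda_{m_1},\ga_{m_1+1},\dots,\ga_{m_2};\hat\ga)$-log-gamma polymer, i.e.\ it equals $\prod_{i\le m_1,j\le n_1}\Gamma(\lambda_i+\hat\ga_j)\prod_{m_1<i\le m_2,j\le n_2}\Gamma(\ga_i+\hat\ga_j)$ times the \emph{one-point} Laplace transform of $Z^{(\lambda,\ga;\hat\ga)}_{(m_2,n_2)}$, to which the already established Theorem \ref{thm:one_point_Laplace} applies and yields the $\mu$-integral. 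Without this recognition step (or an independent proof of your coupling identity), your argument does not close.

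A second, smaller but real, gap is the $L^2$ justification. It is not a routine consequence of \eqref{gamma_asympto}: the Sklyanin measure contributes exponential \emph{growth} $\exp\bigl(\tfrac{\pi}{2}\sum_{i\neq i'}|\lambda_i-\lambda_{i'}|\bigr)$ whose coefficients, after ordering the $\lambda$'s, are of size $m_1-1$ and in general exceed the decay rate $n_1-n_2$ supplied by the arm factors $\sfF^{\hat\ga}_{n_2,n_1}(\lambda_i)$ (and symmetrically for the $\mu$'s). The paper needs the combinatorial matching argument of Lemma \ref{lem:L2int}, which pairs the excess $|\lambda_i|$ and $|\mu_j|$ growth against the cross terms $|\lambda_i+\mu_j|$ coming from $\prod\Gamma(\lambda_i+\mu_j)$, treating the cases $m_1>n_2$ and $m_1=n_2$ separately. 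You should not present this as an automatic consequence of the gamma asymptotics.
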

  \begin{proof}
  We first treat the case $m_2\geq n_2$ and for simplicity we assume that $m_2>n_2$ (the case $m_2=n_2$ requires only minor notational modifications).
  From Theorem \ref{thm:k-point} we can write 
  \begin{align*}
&  \bbE\Big[e^{-u_1Z_{m_1,n_1}-u_2 Z_{m_2,n_2}}\Big] =
  \int e^{-u_1 t_{m_1,n_1}-u_2 t_{m_2,n_2}} \,\,\bbP\circ(\grsk)^{-1}(\dd T)\\
 =&\frac{1}{\prod_{i,j}\Gamma(\ga_i+\hat\ga_j)}\int
     \prod_{j=1}^{n_1} \Big( \tau_{i_*(j),j} \Big)^{-\hat\ga_j} 
  \,\prod_{i=1}^{m_2}\Big( \tau_{i,j_*(i)}  \Big)^{-\ga_i} \,e^{-\frac{1}{t_{11}}-u_1 t_{m_1,n_1}-u_2 t_{m_2,n_2}} \notag\\
  &\qquad\qquad\times \exp\Big(-\sum\limits_{\begin{substack}{ (i,j)\in\,ind(W)}\end{substack}}
   \frac{t_{i-1,j}+t_{i,j-1}}{t_{ij}} \Big) \,\,
  \prod_{\begin{substack}{ (i,j)\in\,ind(W) }
  \end{substack}} \frac{\dd t_{ij}}{t_{ij}}.
   \end{align*}
   We integrate first over all variables $(t_{ij})$ except $(t_{m_1,n_1},t_{m_1-1,n_1-1},...t_{1,n_1-m_1+1})$ and also do the integrations
    over the variables $(t_{ij}\colon i-j < m_1-n_1)$ separately from the integration over the variables $(t_{ij}\colon i-j> m_1-n_1)$. 
   We can then rewrite the above integral as
     \begin{align}\label{eq:2-point-int}
        &\frac{1}{ \prod\limits_{ i,j}\Gamma(\ga_i+\hat\ga_j)}
  \int_{(\bbR_{>0})^{m_1}} e^{-u_1 t_{m_1,n_1}} \,e^{-u_2 t_{m_2,n_2}} \,
 \Whitt_{\ga,\hat\ga}(t_{m_1,n_1}^{\nwarrow}) \, \Psi^{(m_1)}_\ga(t_{m_1,n_1}^{\nwarrow}) \prod_{r=0}^{m_1-1}
  \frac{\dd t_{m_1-r,n_1-r}}{ t_{m_1-r,n_1-r}}, \notag\\
  &
   \end{align}
   where $t_{m_1,n_1}^{\nwarrow}$ denotes the vector $(t_{m_1,n_1},t_{m_1-1,n_1-1},...t_{1,n_1-m_1+1})$, $ \Psi^{(m_1)}_\ga(t_{m_1,n_1}^{\nwarrow})$ is a $GL_{m_1}(\bbR)$-Whittaker function corresponding to the triangle $(t_{ij}\colon i-j\leq m_1-n_1)$, i.e. 
   (recall also notation $\tau_{i,j_*(i)}$ from Proposition \ref{grsk_type})
   \begin{align}\label{int_Whitt_m1}
   &\Psi^{(m_1)}_\ga(t_{m_1,n_1}^{\nwarrow}) \\
   &= \int 
   \prod_{i=1}^{m_1}\Big( \tau_{i,j_*(i)} \Big)^{-\ga_i}
    \exp\Big(-\sum\limits_{\begin{substack}{ (i,j)\in\,ind(W) \\ i-j<m_1-n_1}\end{substack}}
   \frac{t_{i-1,j}+t_{i,j-1}}{t_{ij}} -\sum\limits_{\begin{substack}{ (i,j)\in\,ind(W) \\ i-j=m_1-n_1}\end{substack}}
   \frac{t_{i-1,j}}{t_{ij}} \Big) \,\,
  \prod_{\begin{substack}{ (i,j)\in\,ind(W) \\ i-j<m_1-n_1}
  \end{substack}} \frac{\dd t_{ij}}{t_{ij}},\notag
   \end{align}
   and  $\Whitt_{\ga,\hat\ga}(t_{m_1,n_1}^{\nwarrow})$ is a Whittaker-type function corresponding to the pattern 
   $(t_{ij}\colon i-j\geq m_1-n_1)$, defined by
   \begin{align}\label{int_Whitt_poly}
   \Whitt_{\ga,\hat\ga}(t_{m_1,n_1}^{\nwarrow})&:=\int
     \prod_{j=1}^{n_1} \Big( \tau_{i_*(j),j} \Big)^{-\hat\ga_j} 
  \,\prod_{i=m_1+1}^{m_2}\Big( \tau_{i,j_*(i)}  \Big)^{-\ga_i} \\
  &\qquad\times \exp\Big(-\frac{1}{t_{11}}-\sum\limits_{\begin{substack}{ (i,j)\in\,ind(W) \\ i-j>m_1-n_1}\end{substack}}
   \frac{t_{i-1,j}+t_{i,j-1}}{t_{ij}}
   -\sum\limits_{\begin{substack}{ (i,j)\in\,ind(W) \\ i-j=m_1-n_1}\end{substack}}
   \frac{t_{i,j-1}}{t_{ij}} \Big) \,\,
  \prod_{\begin{substack}{ (i,j)\in\,ind(W) \\ i-j>m_1-n_1}
  \end{substack}} \frac{\dd t_{ij}}{t_{ij}}. \notag
   \end{align}
   We will now use the Plancherel Theorem \ref{thm:Plancerel} for Whittaker functions to rewrite the integral in \eqref{eq:2-point-int} as a contour integral. First,
   using the Plancherel theorem, we write   \eqref{eq:2-point-int}  as
   \begin{align}\label{eq:2-point-int2}
   &\int_{(\iota \bbR)^{m_1}} \dd\gl \,\,s_{m_1}(\gl)\,\,\Bigg(
   \int e^{-u_1 t_{m_1,n_1}} \,\Psi^{(m_1)}_\ga(t_{m_1,n_1}^{\nwarrow}) \Psi^{(m_1)}_{-\gl}(t_{m_1,n_1}^{\nwarrow}) 
      \prod_{r=0}^{m_1-1}\frac{\dd t_{m_1-r,n_1-r}}{ t_{m_1-r,n_1-r}}\Bigg) \notag\\
   &\,\,\qquad\times \Bigg( \int  e^{-u_2 t_{m_2,n_2}}
 \Whitt_{\ga,\hat\ga}(t_{m_1,n_1}^{\nwarrow}) \, 
  \Psi^{(m_1)}_{\gl}(t_{m_1,n_1}^{\nwarrow})
   \prod_{r=0}^{m_1-1}\frac{\dd t_{m_1-r,n_1-r}}{ t_{m_1-r,n_1-r}}\Bigg).
   \end{align} 
   Moreover, we shift the contour of integration from $(\iota\bbR)^{m_1}$ to $(\ell_\delta)^{m_1}$. We do this to avoid the encounter of poles later on. In order to justify both 
   \eqref{eq:2-point-int2} and the shift of contours, we need  
    to check that the two factors inside the parentheses belong to  $L^2((\iota\bbR)^{m_1}; s_{m_1}(\gl) \dd \gl)$. This will be easier to check once we derive some more explicit formulae for these factors. 
   The first integral in \eqref{eq:2-point-int2} can be computed by Stade's identity \ref{thm:Stade} and equals
   \begin{align}\label{eq:2-point-int2.1}
   u_1^{-\sum_{1\leq i\leq m_1} (\gl_i-\ga_i)}\prod_{1\leq i,i'\leq m_1} \Gamma(-\ga_i+\gl_{i'}) . 
   \end{align}
   Notice that the above quantity belongs to $L^2((\iota \bbR)^{m_1};s_{m_1}(\gl)\dd\gl)$ , since by the asymptotics of Gamma function \eqref{gamma_asympto},
    we have that for all values of $\gl_1,...,\gl_{m_1}$ with large imaginary part
      \begin{align}\label{eq:2p_asymptotic1}
  s_{m_1}(\gl) \,\left| u_1^{-\sum_{1\leq i\leq m_1} (\gl_i-\ga_i)}\right|^2 \prod_{1\leq i,i'\leq m_1} |\Gamma(-\ga_i+\gl_{i'})|^2 &\approx 
  \exp\left( \frac{\pi}{2}\sum_{1\leq i\neq i' \leq m_1 }|\gl_i-\gl_{i'}| -\pi m_1\sum_{i=1}^{m_1}|\gl_i| \right)\notag\\
  &\leq  \exp\left( -\pi \sum_{i=1}^{m_1}|\gl_i| \right),
   \end{align}
   which decays exponentially.
   
    If we, now, expand the functions $\Whitt_{\ga,\hat\ga}(t_{m_1,n_1}^{\nwarrow})$ and  $\Psi^{(m_1)}_{\gl}(t_{m_1,n_1}^{\nwarrow})$ in terms of their integral representations \eqref{int_Whitt_m1} and \eqref{int_Whitt_poly}, we recognise that the second integral in \eqref{eq:2-point-int2}, 
    is the Laplace transform of the point-to-point partition function 
    $Z_{(m_2,n_2)}^{(\lambda,\ga;\hat\ga)}$ of a $(\lambda_1,...,\lambda_{m_1},\ga_{m_1+1},...\ga_{m_2}; \hat\ga_1,...,\hat\ga_{n_1})$-log-gamma polymer
     multiplied by the normalising factor
   \begin{align*}
   \prod_{\begin{substack}{1\leq i\leq m_1\\ 1\leq j\leq n_1}\end{substack}} \Gamma(\lambda_i+\hat\ga_j)
    \prod_{\begin{substack}{m_1+1\leq i\leq m_2\\ 1\leq j\leq n_2 }\end{substack}} \Gamma(\ga_i+\hat\ga_j) .
   \end{align*}
We can, therefore, write 
\begin{align}\label{eq:2-point-rec}
&\int  e^{-u_2 t_{m_2,n_2}} \Whitt_{\ga,\hat\ga}(t_{m_1,n_1}^{\nwarrow}) \, 
   \Psi^{(m_1)}_{\gl}(t_{m_1,n_1}^{\nwarrow}) \prod_{r=0}^{m_1-1}\frac{\dd t_{m_1-r,n_1-r}}{ t_{m_1-r,n_1-r}} \notag\\
&\qquad= \prod_{\begin{substack}{1\leq i\leq m_1\\ 1\leq j\leq n_1}\end{substack}} \Gamma(\lambda_i+\hat\ga_j)
    \prod_{\begin{substack}{m_1+1\leq i\leq m_2\\ 1\leq j\leq n_2 }\end{substack}} \Gamma(\ga_i+\hat\ga_j)
    \,\,\bbE\Big[e^{-u_2 Z_{m_2,n_2}^{(\gl,\ga;\hat\ga)}}\Big].
\end{align}
To make this formula more transparent, compare with \eqref{eq:2-point-int}, set $u_1=0$ and recall that $t_{m_2,n_2}$ is the partition function $Z_{m_2,n_2}$.

When $m_2\geq n_2$ we may use the formula provided by Theorem \ref{thm:one_point_Laplace}, applied to the
Laplace transform of $Z_{(m_2,n_2)}^{(\gl,\ga;\hat\ga)}$ with $0<\Re(\delta+\gamma-\hat\ga)$ and $\Re(\delta+\gamma+\ga) >0$, in order to write \eqref{eq:2-point-rec} as
\begin{align}\label{eq:2pointcont}
&     \prod_{\begin{substack}{1\leq i\leq m_1\\ 1\leq j\leq n_1}\end{substack}} \Gamma(\lambda_i+\hat\ga_j)
\prod_{\begin{substack}{m_1+1\leq i\leq m_2\\ 1\leq j\leq n_2 }\end{substack}} \Gamma(\ga_i+\hat\ga_j) 
\int_{(\ell_{\delta+\gamma})^{n_2}} \dd \mu \,s_{n_2}(\mu) \prod_{1\leq j,j'\leq n_2}  \Gamma(-\hat\ga_{j'}+\mu_j)\notag\\
&\hskip 4cm \times
\prod_{j=1}^{n_2}\frac{u_2^{-\mu_j} \prod_{i=1}^{m_1}\Gamma(\mu_j+\gl_i) \prod_{i=m_1+1}^{m_2} \Gamma(\mu_j+\ga_i) }
{u_2^{-\hat\ga_j} \prod_{i=1}^{m_1}\Gamma(\hat\ga_j+\gl_i) \prod_{i=m_1+1}^{m_2} \Gamma(\hat\ga_j+\ga_i)}\notag\\
\end{align}
Finally we insert \eqref{eq:2pointcont} and \eqref{eq:2-point-int2.1} into \eqref{eq:2-point-int2} we arrive to formula \eqref{2p_formula}. 
In a similar way we can arrive to \eqref{2p_formulaB}, in the case $m_2<n_2$. The only difference would be that we will need to adapt Theorem \ref{thm:one_point_Laplace} to apply it when $m<n$, but this is straightforward by transposition of the rectangular array.
 \vskip 2mm
It only remains to check that \eqref{eq:2-point-rec} or equivalently \eqref{eq:2pointcont} belongs to $L^2((\iota\bbR)^{m_1}; s_{m_1}(\gl) \dd \gl)$. We prove this separately, in the following lemma, as it requires a particular combinatorial analysis.
\end{proof}
\begin{lemma}\label{lem:L2int}
The quantity \eqref{eq:2-point-rec} (or equivalently \eqref{eq:2pointcont}) belongs to $L^2((\iota\bbR)^{m_1}; s_{m_1}(\gl) \dd \gl)$.
\end{lemma}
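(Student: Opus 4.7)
The plan is to bound the integrand of \eqref{eq:2pointcont} asymptotically via \eqref{gamma_asympto}, parameterising $\lambda_i=\iota b_i$ on $(\iota\bbR)^{m_1}$ and $\mu_j=\delta+\gamma+\iota c_j$ on $\ell_{\delta+\gamma}$, and checking that the exponential rates in $\sum_i|b_i|$ and $\sum_j|c_j|$ combine to give integrability. For the outer $\lambda$-factor, \eqref{gamma_asympto} yields
$$\prod_{\substack{1\le i\le m_1\\ 1\le j\le n_1}}|\Gamma(\lambda_i+\hat\ga_j)|^2 \;\lesssim\; P_1(|b|)\,\exp\Big(-\pi n_1\sum_{i=1}^{m_1}|b_i|\Big),$$
while the Sklyanin weight obeys $s_{m_1}(\lambda)\lesssim P_2(|b|)\exp\big(\pi(m_1-1)\sum_i|b_i|\big)$, using the crude bound $\sum_{i\ne i'}|b_i-b_{i'}|\le 2(m_1-1)\sum_i|b_i|$. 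Their product has net exponential rate $-\pi(n_1-m_1+1)\sum_i|b_i|$, strictly negative by the hypothesis $m_1\le n_1$.

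It thus suffices to show that the inner $\mu$-integral $|I(\lambda)|$ grows at most polynomially in $|b|$, or at least with an exponential rate strictly smaller than $\pi(n_1-m_1+1)/2$. First I would bound $|I(\lambda)|$ by integrating the modulus of its integrand. The purely $c$-dependent factors $s_{n_2}(\mu)$, $\prod_{j,j'}|\Gamma(-\hat\ga_{j'}+\mu_j)|$ and $\prod_{i'>m_1,j}|\Gamma(\mu_j+\ga_{i'})|$, combined via $\sum_{j\ne j'}|c_j-c_{j'}|\le 2(n_2-1)\sum_j|c_j|$, give a negative net rate whose coefficient is $\tfrac{\pi}{2}(m_2-n_2+2)$, positive precisely thanks to the hypothesis $m_2\ge n_2$, so that this part is uniformly integrable in $c$. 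The sensitive factor is the coupling $\prod_{i\le m_1,\,j\le n_2}\Gamma(\mu_j+\lambda_i)/\Gamma(\hat\ga_j+\lambda_i)$, which carries both $b$- and $c$-rates that must be disentangled via the reverse triangle inequality $|c_j+b_i|\ge \big||c_j|-|b_i|\big|$.

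The main obstacle I expect is precisely this bookkeeping of the cross term: the crude reverse triangle inequality gives a spurious growth of the form $\exp(\pi n_2\sum_i|b_i|/2)$ in $|I(\lambda)|$, which would have to be reabsorbed into the margin of the outer rate $n_1-m_1+1$. To gain this I would split the $c$-integration, for each $j$, into the regions $\{|c_j|\le \max_i|b_i|\}$ and its complement, using in the former the fact that $|\Gamma(\mu_j+\lambda_i)|$ gains effective exponential decay only when $|c_j+b_i|$ is genuinely large, and balancing the volume contribution of the peak $c_j\approx -b_i$ against the off-peak Gamma decay. Once this polynomial-in-$|b|$ bound on $|I(\lambda)|$ is secured, Lebesgue integrability of $P(|b|)\exp\big(-\pi(n_1-m_1+1)\sum_i|b_i|\big)$ over $\bbR^{m_1}$ completes the proof. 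The complementary regime $m_2<n_2$ covered by \eqref{2p_formulaB} is handled by the same scheme with the roles of $\lambda$ and $\mu$ interchanged, and relies again on the same hypothesis $m_1\le n_1$.
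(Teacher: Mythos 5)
Your overall bookkeeping of exponential rates is the right starting point, but the proposal does not close the two places where the naive rates fail, and these are precisely the content of the paper's proof. First, your claim that the purely $\mu$-dependent factors are integrable on their own "thanks to $m_2\ge n_2$" is incorrect: the Sklyanin weight $s_{n_2}(\mu)$ grows like $\exp\big(\tfrac{\pi}{2}\sum_{j\neq j'}|c_j-c_{j'}|\big)$, which can be as large as $\exp\big(\pi(n_2-1)\sum_j|c_j|\big)$, while the $c$-only decay available is only $\tfrac{\pi}{2}(n_2+m_2-m_1)\sum_j|c_j|$ (note the exponent of $\Gamma(\mu_j+\ga_i)$ runs over $m_1<i\le m_2$, so the coefficient is $m_2-m_1$, not $m_2$). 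Integrability of that block alone would require $m_2-m_1\ge n_2-1$, which does not follow from $m_2\ge n_2$. So the cross factors $\Gamma(\mu_j+\gl_i)$ are needed to control the $\mu$-integral itself, not only the $\lambda$-integral. Second, and more importantly, the step you defer ("reabsorbing the spurious growth" coming from the denominators $\Gamma(\hat\ga_j+\gl_i)$ and from $s_{m_1}(\gl)$ by splitting the $c_j$-integration near and away from the peaks $c_j\approx -b_i$) is exactly the hard part: on the near-diagonal configurations $b_1\approx\cdots\approx b_{m_1}$ the cross-term integral yields essentially no decay in $|b|$, and in the spread-out configurations one must show that the decay $\exp(-\tfrac{\pi}{2}\min_c\sum_i|b_i+c|)$ gained per $\mu_j$, accumulated over $j=1,\dots,n_2$, dominates the Sklyanin growth $\exp(\tfrac{\pi}{2}\sum_{i\neq i'}|b_i-b_{i'}|)$. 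This requires a quantitative pairing of each difference $|\gl_i-\gl_{i'}|$ with distinct cross terms $|\gl_i+\mu_j|+|\gl_{i'}+\mu_j|$, with the constraint that no cross term be used twice (and the symmetric pairing for the $\mu$-differences); your sketch names the phenomenon but does not carry out this combinatorial accounting, and it is not a routine computation.

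By way of comparison, the paper avoids estimating the inner $\mu$-integral pointwise in $\gl$ altogether: it applies Cauchy--Schwarz to decouple the $\mu$-integral into a $\gl$-independent factor and a joint $(\gl,\mu)$-integral, reduces everything to the sign of a single linear combination of $|\gl_i|$, $|\mu_j|$, $|\gl_i+\mu_j|$, and then performs an explicit "matching" (ordering the variables, writing $\gl_{m_1-j+1}-\gl_j=\sum_i[(\gl_{m_1-j+1}+\mu_i)-(\gl_j+\mu_i)]$ and similarly for the $\mu$'s, with disjoint edge sets, plus a separate treatment of the boundary case $m_1=n_2$) to show the exponent is bounded by $-(n_1-m_1-1)\sum|\gl_i|-(m_2-m_1)\sum|\mu_j|$. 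If you want to pursue your route of a pointwise bound on $|I(\gl)|$, you would still have to prove an inequality of essentially the same combinatorial nature relating $\min_c\sum_i|b_i+c|$ (repeated $n_2$ times) to $\sum_{i\neq i'}|b_i-b_{i'}|$, so the decoupled Cauchy--Schwarz formulation is the cleaner place to do that work.
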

\begin{proof}
We work on the $L^2$ integrability of \eqref{eq:2pointcont} and start by
using the Cauchy-Schwarz inequality  (bearing in mind the easily verified fact that the Sklyanin measure $s_{n_2}(\mu)$ is a positive measure):
  \begin{align}
 & \int_{(\iota\bbR)^{m_1}} \dd \gl\,s_{m_1}(\gl) \prod_{\begin{substack}{1\leq i\leq m_1\\ n_2+1\leq j\leq n_1}\end{substack}} \left|\Gamma(\lambda_i+\hat\ga_j) \right|^2 \,
	\Big|\int_{( \ell_{\delta+\gamma})^{n_2}} \dd \mu \,s_{n_2}(\mu) \prod_{1\leq j,j'\leq n_2}  \Gamma(-\hat\ga_{j'}+\mu_j)\notag\\
&\hskip 4cm \times
	\prod_{j=1}^{n_2} u_2^{-(\mu_j-\hat\ga_j)} \prod_{i=1}^{m_1}\Gamma(\mu_j+\gl_i) \prod_{i=m_1+1}^{m_2} \Gamma(\mu_j+\ga_i) \Big|^2 \notag\\
&\leq  \int_{(\iota\bbR)^{m_1}} \dd \gl\, s_{m_1}(\gl) \prod_{\begin{substack}{1\leq i\leq m_1\\ n_2+1\leq j\leq n_1}\end{substack}} \left|\Gamma(\lambda_i+\hat\ga_j) \right|^2
\int_{(\ell_{\delta+\gamma})^{n_2}} \dd \mu \,s_{n_2}(\mu) 
\prod_{\begin{substack}{1\leq i\leq m_1\\ 1\leq j\leq n_2}\end{substack}} \Big|\Gamma(\mu_j+\gl_i) \Big|^2 
\prod_{\begin{substack}{m_1+1\leq i\leq m_2\\ 1\leq j\leq n_2}\end{substack}} \Big|\Gamma(\mu_j+\ga_i) \Big|^2\notag\\
&\hskip 4cm \times   \int_{( \ell_{\delta+\gamma})^{n_2}} \dd \mu \,s_{n_2}(\mu) \,\,\Big|u_2^{-\sum_{j=1}^{n_2}(\mu_j-\hat\ga_j)} \Big|^2
\prod_{1\leq j,j'\leq n_2}   \Big|\Gamma(-\hat\ga_{j'}+\mu_j)\Big|^2 \notag.
\end{align}
The second integral with respect to $\dd\mu$ is independent of the $\lambda$ variables and finite for the same reason as in \eqref{eq:2p_asymptotic1}. To check the finiteness of the rest of the integral  
we use the asymptotics of the gamma function \eqref{gamma_asympto}. Since the variables $(\gl_i), (\mu_j)$ have a constant real part and since we will only be using the 
exponential part of the asymptotics of the gamma function \eqref{gamma_asympto}, which only involves the imaginary part, we may assume that the real part is zero.  
We then have
\begin{align}
&s_{m_1}(\gl)  \,s_{n_2}(\mu)
\,\prod_{\begin{substack}{1\leq i\leq m_1\\ n_2+1\leq j\leq n_1}\end{substack}} \left|\Gamma(\lambda_i+\hat\ga_j) \right|^2
\prod_{\begin{substack}{1\leq i\leq m_1\\ 1\leq j\leq n_2}\end{substack}} \Big|\Gamma(\mu_j+\gl_i) \Big|^2 
\prod_{\begin{substack}{m_1+1\leq i\leq m_2\\ 1\leq j\leq n_2}\end{substack}} \Big|\Gamma(\mu_j+\ga_i) \Big|^2\notag\\
&\approx  \exp\Big( \frac{\pi}{2}\sum_{1\leq i\neq i' \leq m_1 }|\gl_i-\gl_{i'}| +\frac{\pi}{2}\sum_{1\leq j\neq j' \leq n_2 }|\mu_j-\mu_{j'}| 
-\pi \sum_{\begin{substack}{1\leq i\leq m_1\\ n_2+1\leq j\leq n_1}\end{substack}}|\gl_i| \notag\\
&\hskip 4cm
-\pi \sum_{\begin{substack}{1\leq i\leq m_1\\ 1\leq j\leq n_2}\end{substack}}|\gl_i+\mu_j|  
-\pi \sum_{\begin{substack}{m_1+1\leq i\leq m_2\\ 1\leq j\leq n_2}\end{substack}}|\mu_j|
\Big)\notag.
\end{align}
Since the last expression involves only absolute values, we may further assume that $\gl$'s and $\mu$'s are real (the absolute value will absorb the imaginary $\iota$).
Without loss of generality, we may also assume that $\gl_{1}\leq \gl_2\leq \cdots\leq \gl_{m_1}$ and that 
$\mu_1\leq\mu_2\leq\cdots\leq \mu_{n_2}$, so that the above expression writes as
\begin{align}\label{lm}
&\exp\Big( \pi\  \sum_{j=1}^{m_1}(m_1-2j+1) \,\,\gl_{m_1-j+1} +\pi \sum_{j=1}^{n_2} (n_2-2j+1) \, \,\mu_{n_2-j+1}  
 -\pi \sum_{\begin{substack}{1\leq i\leq m_1\\ 1\leq j\leq n_2}\end{substack}}|\gl_i+\mu_j|
 \notag\\
&\hskip 3cm  
-\pi (n_1-n_2)\sum_{1\leq i\leq m_1}|\gl_i|
-\pi (m_2-m_1)\sum_{ 1\leq j\leq n_2 }|\mu_j|
\Big).
\end{align}
Since, in general, $m_1-1\geq n_1-n_2$, it is not obvious that the terms $(m_1-2j+1) \gl_j$ for $j=1,...,\lfloor (n_2+1)/2\rfloor$ and $j=m_1,m_1-1,...,m_1-\lfloor (n_2+1)/2\rfloor +1$ can be dominated by the terms $(n_1-n_2)|\gl_j|$ and 
similarly for the corresponding terms involving the $\mu$ variables, since in general $n_1-1\geq m_2-m_1$.
 We will, therefore, need to combine appropriately the $\lambda_j$ and $\mu_j$ terms in a way that it is clear that the combined terms are dominated by the terms $|\gl_i+\mu_j|$ and thus the factors in front of $\gl_j,\mu_j$
drop below the values $n_1-n_2$ and $m_2-m_1$, respectively (see also Figure \ref{matching}). To do this we first assume that $m_1\geq n_2$. This is no loss of generality, since otherwise we can interchange the roles of $\gl$'s and $\mu$'s in the argument below. However, we still need to distinguish between the case $m_1>n_2$ and $m_1=n_2$.

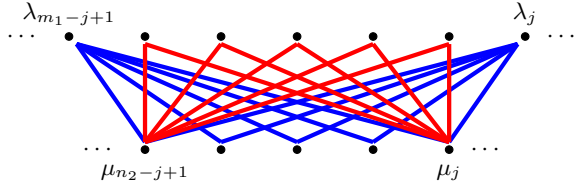
\begin{figure}[t]
 \begin{center}
\begin{tikzpicture}[scale=1.0]

\draw[blue, ultra thick] (1.1,2.9)--(2,1.6);
\draw[blue, ultra thick] (2,1.6)--(6.9,2.9);

\draw[blue, ultra thick] (1.1,2.9)--(3,1.6);
\draw[blue, ultra thick] (3,1.6)--(6.9,2.9);

\draw[blue, ultra thick] (1.1,2.9)--(4,1.6);
\draw[blue, ultra thick] (4,1.6)--(6.9,2.9);

\draw[blue, ultra thick] (1.1,2.9)--(5,1.6);
\draw[blue, ultra thick] (5,1.6)--(6.9,2.9);

\draw[blue, ultra thick] (1.1,2.9)--(6,1.6);
\draw[blue, ultra thick] (6,1.6)--(6.9,2.9);

\draw[red, ultra thick] (2,1.6)--(2,2.9);
\draw[red, ultra thick] (2,2.9)--(6,1.6);

\draw[red, ultra thick] (2,1.6)--(3,2.9);
\draw[red, ultra thick] (3,2.9)--(6,1.6);

\draw[red, ultra thick] (2,1.6)--(4,2.9);
\draw[red, ultra thick] (4,2.9)--(6,1.6);

\draw[red, ultra thick] (2,1.6)--(5,2.9);
\draw[red, ultra thick] (5,2.9)--(6, 1.6);

\draw[red, ultra thick] (2,1.6)--(6,2.9);
\draw[red, ultra thick] (6,2.9)--(6,1.6);

\draw [fill] (1,3) circle [radius=0.05];
\draw [fill] (2,3) circle [radius=0.05];
\draw [fill] (3,3) circle [radius=0.05];
\draw [fill] (4,3) circle [radius=0.05];
\draw [fill] (5,3) circle [radius=0.05];
\draw [fill] (6,3) circle [radius=0.05];
\draw [fill] (7,3) circle [radius=0.05];

\draw [fill] (2,1.5) circle [radius=0.05];
\draw [fill] (3,1.5) circle [radius=0.05];
\draw [fill] (4,1.5) circle [radius=0.05];
\draw [fill] (5,1.5) circle [radius=0.05];
\draw [fill] (6,1.5) circle [radius=0.05];

\node at (1,3.3) {\small{{\bf $\gl_{m_1-j+1}$}}};
\node at (7,3.3) {\small{{\bf $\gl_{j}$}}};
\node at (2,1.2) {\small{{\bf $\mu_{n_2-j+1}$}}};
\node at (6,1.2) {\small{{\bf $\mu_{j}$}}};
\node at (0.4,3) {\small{{\bf $\cdots$}}};
\node at (7.5,3) {\small{{\bf $\cdots$}}};
\node at (6.5,1.5) {\small{{\bf $\cdots$}}};
\node at (1.4,1.5) {\small{{\bf $\cdots$}}};

\end{tikzpicture}

\end{center}  
\caption{ \small 
This figure corresponds to the case $m_1>n_2$.
Two consecutive blue edges connecting $\gl_{m_1-j+1}$ to $\gl_j$ via a $\mu_i$, for $j\leq i\leq n_2-j+1$, denotes the
rewriting of the difference $\gl_{m_1-j+1} - \gl_j$ as $(\gl_{m_1-j+1} +\mu_i )-(\gl_i+\mu_i)$ The latter is bounded by $|\gl_{m_1-j+1} +\mu_i |+|\gl_i+\mu_i|$ and cancelled 
by the corresponding cross term in \eqref{lm}. We perform $n_2-2j+2$ such matching, thus
reducing the pre factors in front of $\gl_{m_1-j+1}$ and $ \gl_j$, in \eqref{lm}, to $\pm(m_1-n_2-1)$. Similar is the role of the red edges. That is, to write $\mu_{n_2-j+1}-\mu_j$ as
$(\mu_{n_2-j+1}+\gl_i)-(\mu_j+\gl_i)$, for $i=j+1,...,n_2-j+1$ and subsequently bound it by $|\mu_{n_2-j+1}+\gl_i|+|\mu_j+\gl_i|$ and cancel it with the corresponding cross term in \eqref{lm}.
Notice that we have performed the matchings in such a way that no edge connecting a $\gl$ and a $\mu$ is taken twice, i.e. once with a red and once with a blue colour. 
}  
\label{matching}
\end{figure}

In the case $m_1>n_2$ we proceed as follows:
For $j=1,...,\lfloor (n_2+1)/2 \rfloor$ we write, by adding and subtracting the terms $\mu_i$, for $i=j,...,(n_2-j+1)$ (this is what in Figure \ref{matching} we called ``matching''),
\begin{align}\label{eq:aux_lm1}
&(m_1-2j+1)\,\gl_{m_1-j+1}-(m_1-2j+1)\,\gl_j \notag\\
&= (m_1-n_2-1) \,\big( \gl_{m_1-j+1}-\gl_j\big)
+ (n_2-2j+2)\, \gl_{m_1-j+1} - (n_2-2j+2)\, \gl_{j} \notag\\
&=(m_1-n_2-1) \,\big( \gl_{m_1-j+1}-\gl_j\big) 
+\sum_{i=j}^{n_2-j+1} (\gl_{m_1-j+1}+\mu_i) -\sum_{i=j}^{n_2-j+1} (\gl_{j}+\mu_i) \notag\\
&\leq (m_1-n_2-1) \,\big(|\gl_{m_1-j+1}|+ |\gl_j|\big) 
+\sum_{i=j}^{n_2-j+1} |\gl_{m_1-j+1}+\mu_i| +\sum_{i=j}^{n_2-j+1} |\gl_{j}+\mu_i|.
\end{align}
Similarly, by adding and subtracting the terms $\gl_i$, for $i=j+1,...,n_2-j+1$, we write
\begin{align}\label{eq:aux_lm2}
(n_2-2j+1)\mu_{n_2-j+1}-(n_2-2j+1)\mu_j
&= \sum_{i=j+1}^{n_2-j+1} (\mu_{n_2-j+1}+\gl_i) -\sum_{i=j+1}^{n_2-j+1}(\mu_j+\gl_i) \notag\\
&\leq \sum_{i=j+1}^{n_2-j+1} |\mu_{n_2-j+1}+\gl_i| +\sum_{i=j+1}^{n_2-j+1}|\mu_j+\gl_i |.
\end{align}
We now write the exponent in \eqref{lm} as (for convenience we ignore the factors $\pi$)
\begin{align}\label{lm2}
& \sum_{j=1}^{m_1}(m_1-2j+1) \,\,\gl_{m_1-j+1} + \sum_{j=1}^{n_2} (n_2-2j+1) \, \,\mu_{n_2-j+1}  
 - \sum_{\begin{substack}{1\leq i\leq m_1\\ 1\leq j\leq n_2}\end{substack}}|\gl_i+\mu_j|
 \notag\\
&\hskip 3cm  
- (n_1-n_2)\sum_{1\leq i\leq m_1}|\gl_i|
- (m_2-m_1)\sum_{ 1\leq j\leq n_2 }|\mu_j| \notag\\
&= \sum_{j=1}^{\lfloor (n_2+1)/2\rfloor}(m_1-2j+1) \,\big(\gl_{m_1-j+1} -\gl_{j}\big)+
\sum_{j=\lfloor(n_2+1)/2\rfloor +1}^{m_1-\lfloor(n_2+1)/2\rfloor}(m_1-2j+1) \,\gl_{m_1-j+1}\notag \\
&\qquad\qquad+ \sum_{j=1}^{\lfloor (n_2+1)/2 \rfloor} (n_2-2j+1) \, \big(\mu_{n_2-j+1} -\mu_j \big)
 - \sum_{\begin{substack}{1\leq i\leq m_1\\ 1\leq j\leq n_2}\end{substack}}|\gl_i+\mu_j|
 \notag\\
&\hskip 3cm  
- (n_1-n_2)\sum_{1\leq i\leq m_1}|\gl_i|
- (m_2-m_1)\sum_{ 1\leq j\leq n_2 }|\mu_j|.
\end{align}
We insert in this the inequalities \eqref{eq:aux_lm1}, \eqref{eq:aux_lm2} summed up over $j=1,...,\lfloor (n_2+1)/2 \rfloor$, and notice that since for 
$j=\lfloor(n_2+1)/2\rfloor +1,...,m_1-\lfloor(n_2+1)/2\rfloor$ it holds that $|m_1-2j+1| \leq m_1-n_2-1$, it follows that \eqref{lm2} is bounded by
\begin{align*}
- (n_1-m_1-1)\sum_{1\leq i\leq m_1}|\gl_i|
- (m_2-m_1)\sum_{ 1\leq j\leq n_2 }|\mu_j|.
\end{align*}
Since $m_2>m_1$ and $n_1\geq m_1$ the desired integrability follows.

\vskip 2mm
In the case $m_1=n_2$ we do a slightly different matching by writing
\begin{align*}
&(m_1-2j+1) \big(\gl_{m_1-j+1}-\gl_j\big) +(n_2-2j+1)\big(\mu_{n_2-j+1}-\mu_j \big)\\
& = \big( \gl_{m_1-j+1}+ \mu_{n_2-j+1}\big) +\sum_{i=j+1}^{m_1-j}\big( \gl_{m_1-j+1}+\mu_i \big) - 
\sum_{i=j+1}^{m_1-j}\big( \gl_{j}+\mu_i \big)\\
&\qquad +\sum_{i=j+1}^{n_2-j}\big( \mu_{n_2-j+1}+\gl_i \big) - 
\sum_{i=j+1}^{n_2-j}\big( \mu_{j}+\gl_i \big) - \big(\mu_j+\gl_j\big)\\
&\leq \big |\gl_{m_1-j+1}+ \mu_{n_2-j+1}\big| +\sum_{i=j+1}^{m_1-j}\big| \gl_{m_1-j+1}+\mu_i \big| + 
\sum_{i=j+1}^{m_1-j}\big| \gl_{j}+\mu_i \big|\\
&\qquad +\sum_{i=j+1}^{n_2-j}\big| \mu_{n_2-j+1}+\gl_i \big| +
\sum_{i=j+1}^{n_2-j}\big| \mu_{j}+\gl_i \big| +\big|\mu_j+\gl_j\big|
\end{align*}
and finally we proceed as in the previous case.

  \end{proof}
  \begin{remark}\rm
The proof of the above theorem and in particular the derivation of relation \eqref{eq:2-point-rec} shows a way to obtain a contour integral formula
for the joint Laplace transform of $k$ point-to-point partition functions via the following recursion (we do not attempt to write the explicit formulae, which would be
 complicated, or to check the various $L^2$ conditions for the application of the Plancherel theorem):

 Let $Z_{(m_1,n_1)},...,Z_{(m_k,n_k)}$ be the partition functions from $(1,1)$ to $(m_1,n_1),...,(m_k,n_k)$, respectively, of an $(\ga,\hat\ga)$-log-gamma polymer. Assume that $m_1\leq m_2\leq\cdots\leq m_k $ and $n_1\geq n_2\geq \cdots\geq n_k$. Moreover, without loss of generality, assume that $m_1\leq n_1$. The joint Laplace transform satisfies the following recursive relation
 \begin{align*}
 &\bbE\Big[e^{- u_1 Z_{(m_1,n_1)}-\cdots-u_kZ_{(m_k,n_k)}}\Big] \\
 &=\frac{1}{ \prod\limits_{ i,j}\Gamma(\ga_i+\hat\ga_j)}
 \int_{(\iota \bbR)^{m_1} } \dd \lambda
    \,\,s_{m_1}(\lambda)   \,u_1^{-\sum_{1\leq i\leq m_1} (\gl_i-\ga_i)}\, \prod_{1\leq i,i'\leq m_1}\Gamma(-\ga_i+\gl_{i'})      \\
 &\hskip 2cm 
    \times\prod_{\begin{substack}{1\leq i\leq m_1\\ 1\leq j\leq n_1}\end{substack}} \Gamma(\lambda_i+\hat\ga_j)
    \prod_{\begin{substack}{m_1+1\leq i\leq m_2\\ 1\leq j\leq n_2 }\end{substack}} \Gamma(\ga_i+\hat\ga_j)
     \, \bbE\Big[e^{- u_2 Z_{(m_2,n_2)}^{(\gl,\ga;\hat\ga)}-\cdots-u_kZ_{(m_k,n_k)}^{(\gl,\ga;\hat\ga)}}\Big] ,
    \end{align*}
    where $Z_{(m_j,n_j)}^{(\gl,\ga;\hat\ga)}, j=2,...,k$, are the point-to-point partition functions corresponding to a 
    log-gamma polymer with parameters
    $(\gl_1,...,\gl_{m_1},\ga_{m_1+1},...,\ga_{m_k}; \hat\ga_1,...,\hat\ga_{n_1})$.
\end{remark}
\begin{remark}\label{rem:OY}\rm
         Let us show how our formula for the joint Laplace transform of two equal-time log-gamma partition functions provides the corresponding formula for the joint Laplace transform of the
	 semi-discrete directed polymer in a Brownian environment studied in \cite{O12} and often called the {\it O'Connell-Yor polymer}. By taking an appropriate scaling limit (this is a straightforward modification of Section 4.2 of \cite{COSZ14} 
	 in order to include Brownian motions with drift) we can recover the semi-discrete directed polymer
	 partition function from the log-gamma polymer partition function. More precisely, let  $Z_{(m_1,Nt_1)}^{(N)}$  be the 
	 partition function of a log-gamma polymer at point $(m_1,Nt_1)$ with parameters $(\ga_i,\hat\ga_j)=(\ga_i,N)$, for $i,j\geq 1$. 
	 Let also $(B^i(\cdot))_{i\geq 1}$ be independent Brownian motions with drifts $\ga_i, i\geq 1$.
	  We then have the following convergence in law, as $N\to \infty$,
		\begin{equation*}
		\log\Big(N^{Nt_1} Z^{(N)}_{(m_1,Nt_1)}\Big)-\frac{t_1}{2}\to \log \int ...\int_{0\leq s_1\leq...\leq s_{m_1-1}\leq t_1} \exp\Bigg(\sum_{i=1}^{m_1}\big(B^i(s_i)-B^i(s_{i-1})\big)\Bigg) ds_1...ds_{m_1-1}
		\end{equation*}
		which is equivalent to
		\begin{equation*}
		\bbE\Big[e^{-u_1^N Z^{(N)}_{(m_1,Nt_1)}}\Big]\xrightarrow[N\to\infty]{} \bbE\Big[e^{-u_1Z^{OY}(m_1,t_1)}\Big]
		\end{equation*}
		where $u_1^N=\frac{u_1N^{Nt_1}}{(\sqrt{e})^{t_1}}$ and $Z^{OY}(m_1,t_1)$ is the partition function of the O'Connell-Yor
		 semi discrete polymer at point $(m_1,t_1)$. In the case of two points 
		$(m_1,Nt_1)$ and $(m_2,Nt_2)$, such that $m_1<m_2$ and $t_1>t_2$,  we have 
		\begin{equation}\label{OY:conv}
		\lim_{N\to\infty}\bbE\Big[e^{-u_1^N Z^{(N)}_{(m_1,Nt_1)}-u_2^NZ^{(N)}_{(m_2,Nt_2)}}\Big]= \bbE\Big[e^{-u_1Z^{OY}(m_1,t_1)-u_2Z^{OY}(m_2,t_2)}\Big]
		\end{equation}
		where $u_1^N=\frac{u_1N^{Nt_1}}{(\sqrt{e})^{t_1}}$ and $u_2^N=\frac{u_2N^{Nt_2}}{(\sqrt{e})^{t_2}}$. The left hand side corresponds to  the joint Laplace transform for
		 partition functions at points $(m_1,Nt_1)$ and $(m_2,Nt_2)$, hence it is given by the formula (\ref{2p_formulaB}) with 
		 parameters $(\ga_i,\hat{\alpha}_j)=(\ga_i,N), \,i,j\geq 1$. By using the following convergence 
		\begin{equation*}
		\lim_{N\to\infty}\Big(\frac{\Gamma(\mu_j+N)}{\Gamma(\alpha_j+N)}\Big)^{t_2N}\frac{(N^N/\sqrt{e})^{-\mu_jt_2}}{(N^N/\sqrt{e})^{-\alpha_jt_2}}=\exp\big(\,\frac{t_2}{2}(\mu_j^2-\alpha_j^2)\big),
		\end{equation*}
		we can take the limit of the left hand side in (\ref{OY:conv}) and obtain the Laplace transform for two points of semi-discrete polymer
		\begin{align}
		&\bbE\Big[e^{-u_1Z^{OY}(m_1,t_1)-u_2Z^{OY}(m_2,t_2)}\Big] \notag\\
		= &\int_{(\ell_{\delta})^{m_1} } \dd \lambda
		\,\,s_{m_1}(\lambda)  \prod_{1\leq i,i'\leq m_1} \Gamma(-\ga_i+\gl_{i'}) \prod_{i=1}^{m_1}\frac{(u_1/u_2)^{-\gl_i}\, }{ (u_1/u_2)^{-\ga_i}} \exp\Big(\frac{t_1-t_2}{2}(\gl_i^2-\ga_i^2)\Big) \notag\\
		&\quad\times   \int_{(\ell_{\delta'})^{m_2} } \dd \mu  
		\,\,s_{m_2}(\mu)  
		\prod_{\begin{substack}{m_1+1\leq i\leq m_2\\ 1\leq i'\leq m_2}\end{substack}} 
		\Gamma(-\ga_i+\mu_{i'}) 
		\prod_{i=1}^{m_2}\frac{u_2^{-\mu_i}}{ u_2^{-\ga_i}}\exp\Big(\frac{t_2}{2}(\mu_i^2-\ga_i^2)\Big)   \notag\\
		&\qquad\quad\times
		\,\prod_{\begin{substack}{1\leq i\leq m_1\\ 1\leq i'\leq m_2}\end{substack}} 
		\Gamma(-\gl_i+\mu_{i'}) \,.
		\end{align}
	
\end{remark}

\section{PNG and geometric PNG}\label{sec:gPNG}
In this section we define the geometric PNG (gPNG) and use this to obtain alternative integral formulae for the joint Laplace transform of the point-to-point partition functions. Naturally,
 gPNG is a geometric lifting of PNG and we start by reviewing the latter.
 \vskip 2mm
 \subsection{Polynuclear Growth Model and local moves}
PNG can be viewed as a construction of an ensemble of nonintersecting paths. In this process plateaux of certain (random) heights are created and, once created, they grow horizontally at unit speed. When two growing plateaux overlap in the process of growth, the overlapping area drops one level below creating a new plateau. At the same time new plateaux may be created on top of the top-most interface and the process
continues in this fashion. Let us describe this in more detail and let us start with an input matrix $W=(w_{ij})\in(\R_{>0})^{n\times n}$, which for simplicity we take to be square. This matrix encodes the heights of the boxes of unit width that will be created and added on top of the interface.
It will be clear that PNG can be encoded in terms of local transformations $\ell_{ij}$, which we now define:
For each $2\le i\le n$ and $2\le j\le m$ define 
a mapping $\ell_{ij}$ which takes as input a matrix $W=(w_{ij})\in(\R_{>0})^{n\times m}$ and replaces the submatrix
$$ \begin{pmatrix} w_{i-1,j-1}& w_{i-1,j}\\ w_{i,j-1}& w_{ij}\end{pmatrix}$$
of $W$ by its image under the map
\begin{equation}\label{tabcd}
\begin{pmatrix} a& b\\ c& d\end{pmatrix} \qquad \mapsto \qquad 
\begin{pmatrix} b\wedge c - a & b\\ c& d+b\vee c \end{pmatrix},
\end{equation}
and leaves the other elements unchanged.  For $2\le i\le n$ and $2\le j\le m$, 
define $\ell_{i1}$ to be the mapping that replaces
the element $w_{i1}$ by $w_{i-1,1}+w_{i1}$ and $\ell_{1j}$ to be the mapping that replaces the element $w_{1j}$ by 
$w_{1,j-1}+w_{1j}$.  By convention we define $\ell_{11}$ to be the identity map.

\begin{figure}[t]
	\begin{center}
		\begin{tikzpicture}[scale=.7]

		\draw[-,thick] (-3,0)--(3,0);

		\draw[-,thick](-0.5,0)--(-0.5,1.5)--(0.5,1.5)--(0.5,0);
		\draw[<->,red] (1.5,0)--(1.5,1.5);
		\draw[->,green](0.5,0.7)--(1,0.7);
		\draw[->,green](-0.5,0.7)--(-1,0.7);
		\node at (2,1) {\small{{\bf $w_{11}$}}};
		
		\fill[black]  (0,0.5) circle [radius=0.1];
		\fill[black]  (1,1) circle [radius=0.1];
		\fill[black]  (-1,1) circle [radius=0.1];
		\fill[black]  (0,2) circle [radius=0.1];				
		\fill[black]  (2,2) circle [radius=0.1];
		\fill[black]  (-2,2) circle [radius=0.1];
		
		\draw[-,thick](8,0)--(16,0);
		\draw[-,thick] (10.5,0)--(10.5,1.5)--(13.5,1.5)--(13.5,0);
		\draw[-,thick,red] (10.5,1.5)--(10.5,2.5)--(11.5,2.5)--(11.5,1.5);
		\draw[-,thick,red] (12.5,1.5)--(12.5,2)--(13.5,2)--(13.5,1.5);		
		\fill[black]  (12,0.5) circle [radius=0.1];
		\fill[black]  (13,1) circle [radius=0.1];
		\fill[black]  (11,1) circle [radius=0.1];
		\fill[black]  (12,2) circle [radius=0.1];				
		\fill[black]  (14,2) circle [radius=0.1];
		\fill[black]  (10,2) circle [radius=0.1];
		\draw[<->,thick,red] (9.5,0)--(9.5,2.5);
		\draw[<->,red,thick] (14.5,0)--(14.5,2);
		\draw[<->,red,thick] (10.3,1.5)--(10.3,2.5);
		\draw[<->,red,thick] (13.7,1.5)--(13.7,2);	
	    \draw[->,green](13.5,0.7)--(14,0.7);
	    \draw[->,green](10.5,0.7)--(10,0.7);
		\draw[->,green](13.5,2.2)--(14,2.2);
		\draw[->,green](10.5,2.6)--(10,2.6);
		\draw[->,green](12.5,2.2)--(12,2.2);
	    \draw[->,green](11.5,2.6)--(12,2.6);    
	    \node at (14.1,1.7) {\small{{\bf $w_{12}$}}};
	    \node at (10.5,2.8) {\small{{\bf $w_{21}$}}};
	    \node at (8.3,1.5) {\small{{\bf $w_{11}+w_{21}$}}};
	    \node at (15.7,1) {\small{{\bf $w_{11}+w_{12}$}}};	
		\end{tikzpicture}
	\end{center}  
	
	\caption{ \small  The PNG at time $t=1,2$. The dots represent the entries $(w_{ij})$ of the input array, determining the nucleations and the heights of the plateaux. 
	Once created, the plateaux expand left/right at unit speed.
	}
\end{figure}
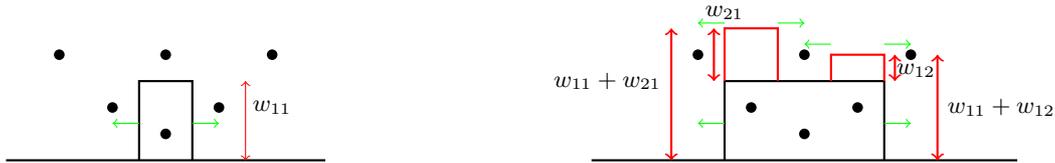
 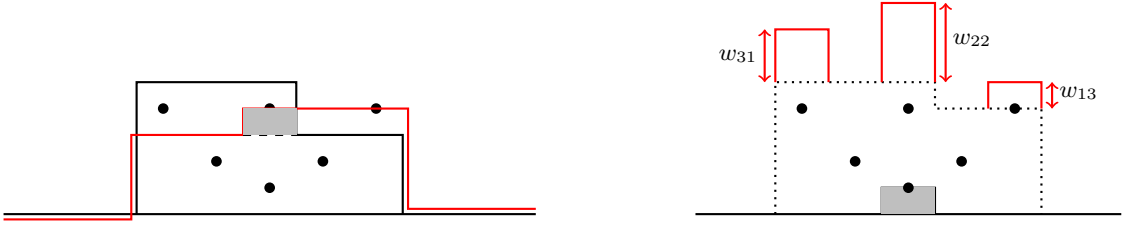
\begin{figure}[t]
 	\begin{center}
 		\begin{tikzpicture}[scale=0.7]
 		\fill[black]  (0,0.5) circle [radius=0.1];
 		\fill[black]  (1,1) circle [radius=0.1];
 		\fill[black]  (-1,1) circle [radius=0.1];
 		\fill[black]  (0,2) circle [radius=0.1];				
 		\fill[black]  (2,2) circle [radius=0.1];
 		\fill[black]  (-2,2) circle [radius=0.1];
 		\draw[black, thick](-5,0)--(5,0);
 		\draw[-,black, thick](-2.5,0)--(-2.5,2.5)--(0.5,2.5)--(0.5,1.5)--(2.5,1.5)--(2.5,0);
 		\draw[-,red,thick] (-5,-0.1)--(-2.6,-0.1)--(-2.6,1.5)--(-0.5,1.5)--(-0.5,2)--(2.6,2)--(2.6,0.1)--(5,0.1);
 		\draw[dashed,thick](-0.5,1.5)--(0.5,1.5);
 		\fill[lightgray](-0.5,1.5)--(0.5,1.5)--(0.5,2)--(-0.5,2)--(-0.5,1.5);

 		\draw[-,thick](8,0)--(16,0);
 		\draw[dotted,thick] (9.5,0)--(9.5,2.5)--(12.5,2.5)--(12.5,2)--(14.5,2)--(14.5,0);
 		\draw[-,thick,red] (9.5,2.5)--(9.5,3.5)--(10.5,3.5)--(10.5,2.5);
 		\draw[-,thick,red] (11.5,2.5)--(11.5,4)--(12.5,4)--(12.5,2.5);	
 		\draw[-,thick,red] (13.5,2)--(13.5,2.5)--(14.5,2.5)--(14.5,2);
 		\draw[-,thick] (11.5,0)--(11.5,0.5)--(12.5,0.5)--(12.5,0);
 		\fill[lightgray](11.5,0)--(11.5,0.5)--(12.5,0.5)--(12.5,0)--(11.5,0);			
 		\fill[black]  (12,0.5) circle [radius=0.1];
 		\fill[black]  (13,1) circle [radius=0.1];
 		\fill[black]  (11,1) circle [radius=0.1];
 		\fill[black]  (12,2) circle [radius=0.1];				
 		\fill[black]  (14,2) circle [radius=0.1];
 		\fill[black]  (10,2) circle [radius=0.1];
 		
 		\draw[<->,thick,red] (9.3,2.5)--(9.3,3.5);
 		\draw[<->,red,thick] (12.7,2.5)--(12.7,4);
 		\draw[<->,red,thick] (14.7,2)--(14.7,2.5);

		 \node at (8.8,3) {\small{{\bf $w_{31}$}}};
		 \node at (13.2,3.3) {\small{{\bf $w_{22}$}}};
		 \node at (15.2,2.3) {\small{{\bf $w_{13}$}}};	
 	
 		\end{tikzpicture}
 	\end{center}  
 	\caption{ \small  At $t=3$, an overlap of height $\min(h_{12},h_{21})-h_{11}$ drops down and new plateaux are created at $x=-2,0,2$.
 	}
 \end{figure}
Keeping these definitions in mind, we now start describing the geometric construction of PNG 
(we clarify that by ``geometric construction''
we do not mean the {\it geometric PNG}).
 It is visually more natural to consider coordinates $(x,t)=(i-j, i+j-1), 1\le i,j\le n$, where $t$ stands for time and $x$ for the spatial variable. We have:
\begin{itemize}
\item[$\bullet$]
Let us start with a single line of height $h=0$ at time $t=0$. 
At time $t=1$ a plateau of height $w_{11}$, of width $1$ and centred at zero, is created. It will then start growing leftwards and rightwards at a unit speed.
\item[$\bullet$]
At time $t=2$ two new plateaux of heights $w_{12}$ and $w_{21}$ will be created on top of the plateau of height $w_{11}$, which by this moment extends from $-3/2$ to $3/2$. The new plateaux are centred 
 at $x=-1$ and $x=1$, respectively (notice that these are the values of the differences $i-j$). The outer shape of this pile of plateaux will be a line with vertical up and down steps, which will have heights
  \begin{align*}
  h^{(1)}_{12}&=w_{12}+w_{11} , \,\,\quad \text{for}\,\,\, x\in[-3/2,-1/2)\\
  h^{(1)}_{11}&=w_{11}              , \quad\quad\quad\quad \text{for}\,\,\, x\in[-1/2,1/2)\\
  h^{(1)}_{21}&=w_{21}+w_{11} , \,\,\quad \text{for}\,\,\, x\in[1/2,3/2),
  \end{align*}
  and has zero height otherwise.
 Subsequently, the new plateaux will start growing at a unit speed  leftwards and rightwards simultaneously with the 
 underlying plateau of height $w_{11}$.
 \item[$\bullet$]
 The third step is a bit more convolved. First, at time $t=3$, two new boxes of heights $w_{13}, w_{31}$ and unit width will be added on the extended plateau created at time $t=2$, above $x=-2$ and $x=2$, respectively, thus creating two new plateaux of heights $ h^{(1)}_{12}+w_{13}$ and $ h^{(1)}_{21}+w_{31}$. At the same time,
 the two plateaux which were created at time $t=2$, will have grown and they extend on top of $[-5/2,1/2]$ and $[-1/2,5/2]$, therefore creating an
 overlap, which extends on top of $[-1/2,1/2]$. The overlapping region of height $h^{(1)}_{12} \wedge h^{(1)}_{21}-h^{(1)}_{11}$ will drop below, on the zero level, creating a new plateau centred at $0$, having width one and height $h^{(2)}_{11}:=h^{(1)}_{12} \wedge h^{(1)}_{21}-h^{(1)}_{11}$. At the same time, on top of the topmost plateau which extends above $[-1/2,1/2]$, a new box of height $w_{22}$ will be added, creating eventually
 a plateau of height $h^{(1)}_{11}=w_{22}+ h^{(1)}_{12} \vee h^{(1)}_{21}$. So at this stage two lines have been created. The top one with heights
 \[
 h^{(1)}(x,3)=
 \begin{cases}
 h^{(1)}_{13}= h^{(1)}_{12}+w_{13},  & \text{for}\,\,\, x\in[-5/2,-3/2)\\
 h^{(1)}_{12}= w_{11}, &\text{for}\,\,\, x\in[-3/2,-1/2)\\
 h^{(1)}_{11}= w_{22}+ h^{(1)}_{12} \vee h^{(1)}_{21},& \text{for}\,\,\, x\in[-1/2,1/2)\\
 h^{(1)}_{21}=w_{11}, & \text{for}\,\,\, x\in[1/2,3/2)\\
 h^{(1)}_{31} = h^{(1)}_{21} +w_{31},  &\text{for}\,\,\, x\in[3/2,5/2)\\
 \end{cases}
 \] 
 and zero elsewhere and the lower one with heights 
  \[
 h^{(2)}(x,3)= h^{(2)}_{11} =h^{(1)}_{12} \wedge h^{(1)}_{21}-h^{(1)}_{11}  , \,\,\quad \text{for}\,\,\, x\in[-1/2,1/2)
 \]
 and  zero elsewhere. 
 \item[$\bullet$]
The same expansion, nucleation and drop down process continues. So at time $t=4$ the top line $h^{(1)}$ will have height $h^{(1)}_{13}+w_{13}$ above interval $[-7/2,5/2)$. Above interval 
$[-5/2,-3/2)$ a drop down, due to overlap of the expanded plateaux, as well an additional nucleation will take place. This action is encoded by the 
 transformation $\ell_{23}$ as
\begin{align*}
&\begin{pmatrix}
 h^{(1)}_{12}& h^{(1)}_{13} \\
h^{(1)}_{22}& w_{23}
\end{pmatrix}
\stackrel{\ell_{23}}{\longmapsto}
\begin{pmatrix}
h^{(1)}_{13}\wedge h^{(1)}_{22}  -h^{(1)}_{12}& h^{(1)}_{13} \\
h^{(1)}_{22} & w_{23} +h^{(1)}_{13}\vee h^{(1)}_{22} 
\end{pmatrix}
\end{align*}
The lower-right entry encodes the new height of the top line $h^{(1)}$ due to the overlap and nucleation and the upper-left entry encodes
 the height of the overlapping area, which will then drop down to add on top of the second line $h^{(2)}$. The latter drop down and addition on $h^{(2)}$ 
 is encoded by the transformation
\begin{align*}
&\begin{pmatrix}
h^{(2)}_{11}\,, & h^{(1)}_{13}\wedge h^{(1)}_{22}  -h^{(1)}_{12} 
\end{pmatrix}
\stackrel{\ell_{12}}{\longmapsto}
\begin{pmatrix}
h^{(2)}_{11}\,, &h^{(1)}_{12}+ (h^{(1)}_{13}\wedge h^{(1)}_{22}  -h^{(1)}_{12}) 
\end{pmatrix}=: 
\begin{pmatrix}
h^{(2)}_{11}\,, &h^{(2)}_{12} 
\end{pmatrix}
\end{align*}
with $\begin{pmatrix}
h^{(2)}_{11}\,, &h^{(2)}_{12} 
\end{pmatrix}$ recording the heights of the second line on top of $[-1/2,1/2)$ and $[-3/2,-1/2)$, respectively.
Similarly, the drop down and nucleation on top of $[1/2,3/2)$ is described via transformations  $\ell_{32}$ and $\ell_{21}$ as (we show both transformations in one step)
\begin{align*}
&\begin{pmatrix}
h^{(2)}_{11}& h^{(2)}_{12} & \cdots\\ 
h^{(1)}_{21}& h^{(1)}_{22} & \cdots\\
h^{(1)}_{31}& w_{32} & \cdots\\
\vdots & \vdots & 
\end{pmatrix}
\stackrel{\ell_{32}}{\longmapsto}
\begin{pmatrix}
h^{(2)}_{11}& h^{(2)}_{12} & \cdots\\ 
h^{(1)}_{22}\wedge h^{(1)}_{31} -h^{(1)}_{21}& h^{(1)}_{22} & \cdots\\
h^{(1)}_{31}& w_{32}+ h^{(1)}_{22} \vee h^{(1)}_{31} & \cdots\\
\vdots & \vdots & 
\end{pmatrix}\\
&\stackrel{\ell_{21}}{\longmapsto}
\begin{pmatrix}
h^{(2)}_{11}& h^{(2)}_{12} & \cdots\\ 
h^{(2)}_{11}+ (h^{(1)}_{22}\wedge h^{(1)}_{31} -h^{(1)}_{21}) & h^{(1)}_{22} & \cdots\\
h^{(1)}_{31}& w_{32}+ h^{(1)}_{22} \vee h^{(1)}_{31} & \cdots\\
\vdots & \vdots & 
\end{pmatrix}
=: 
\begin{pmatrix}
h^{(2)}_{11}& h^{(2)}_{12} & \cdots\\ 
h^{(2)}_{21}& h^{(1)}_{22} & \cdots\\
h^{(1)}_{31}& h^{(1)}_{32} & \cdots\\
\vdots & \vdots & 
\end{pmatrix}.
\end{align*}
Finally, a nucleation of a box of height $w_{41}$ will take place on top of the expanded top line, $h^{(1)}$. This can be encoded via transformation $\ell_{41}$
\begin{align*}
\begin{pmatrix}
\vdots &\\
h^{(1)}_{31} & \cdots\\
w_{41} & \cdots\\
\vdots & 
\end{pmatrix}
&\stackrel{\ell_{41}}{\longmapsto}
\begin{pmatrix}
\vdots &\\
h^{(1)}_{31} & \cdots\\
w_{41} + h^{(1)}_{31} & \cdots\\
\vdots & 
\end{pmatrix} 
=:
\begin{pmatrix}
\vdots &\\
h^{(1)}_{31} & \cdots\\
h^{(1)}_{41}  & \cdots\\
\vdots & 
\end{pmatrix}.
\end{align*}
\item[$\bullet$] The same rules of nucleation and drop down will continue in an inductive manner. This process will result in an ensemble of nonintersecting lines.  
\end{itemize}
The overall evolution for the first four steps can be given as follows
\begin{align*}
&\begin{pmatrix} w_{11}& w_{12} &w_{13} & w_{14} & \cdots\\ 
w_{21}& w_{22} &w_{23} & w_{24} & \cdots\\
w_{31}& w_{32} &w_{33} &  w_{34} &\cdots\\
w_{41}& w_{42} &w_{43} &  w_{44} & \cdots\\
\vdots & \vdots & \vdots &\vdots
\end{pmatrix} 
\xrightarrow[t=1]{h^{(1)}_{11}=w_{11}}
\begin{pmatrix}
h^{(1)}_{11}& w_{12} &w_{13} & w_{14} & \cdots\\ 
w_{21}& w_{22} &w_{23} &  w_{24}  &\cdots\\
w_{31}& w_{32} &w_{33} & w_{34} &\cdots\\
w_{41}& w_{42} &w_{43} &  w_{44} & \cdots\\
\vdots & \vdots & \vdots & \vdots 
\end{pmatrix}\\
&\xrightarrow[t=2]{\ell_{21}\circ \ell_{12}}
\begin{pmatrix}
h^{(1)}_{11}& h^{(1)}_{12} &w_{13} & w_{14} &\cdots\\ 
h^{(1)}_{21}& w_{22} &w_{23} & w_{24} &\cdots\\
w_{31}& w_{32} &w_{33} & w_{34} & \cdots\\
w_{41}& w_{42} &w_{43} &  w_{44} & \cdots\\
\vdots & \vdots & \vdots &\vdots
\end{pmatrix}
\xrightarrow[t=3]{\ell_{31} \circ \ell_{22}\circ \ell_{13}}
\begin{pmatrix}
h^{(2)}_{11}& h^{(1)}_{12} &h^{(1)}_{13} & w_{14} & \cdots\\ 
h^{(1)}_{21}& h^{(1)}_{22} &w_{23} & w_{24} & \cdots\\
h^{(1)}_{31}& w_{32} &w_{33} & w_{34} & \cdots\\
w_{41}& w_{42} &w_{43} &  w_{44} & \cdots\\
\vdots & \vdots & \vdots& \vdots
\end{pmatrix}\\
&\xrightarrow[t=4]{\ell_{41} \,\circ \, (\ell_{21}\circ\ell_{32}) \,\circ \,(\ell_{12}\circ \ell_{23}) \circ \ell_{14}}
\begin{pmatrix}
h^{(2)}_{11}& h^{(2)}_{12} &h^{(1)}_{13} & h^{(1)}_{14} & \cdots\\ 
h^{(2)}_{21}& h^{(1)}_{22} &h^{(1)}_{23} & w_{24} & \cdots\\
h^{(1)}_{31}& h^{(1)}_{32} &w_{33} & w_{34} & \cdots\\
 h^{(1)}_{41}& w_{42} &w_{43} &  w_{44} & \cdots\\
\vdots & \vdots & \vdots& \vdots
\end{pmatrix}
\cdots
\end{align*}  
Introducing the transformations 
\begin{eqnarray}\label{rdef}
r^i_j=\begin{cases} \ell_{1,j-i+1}\circ\cdots\circ \ell_{i-1,j-1}\circ \ell_{ij} ,& i\le j\\
\ell_{i-j+1,1}\circ\cdots\circ \ell_{i-1,j-1}\circ \ell_{ij}, & i\ge j,\end{cases}
\end{eqnarray}
we see that, given a nucleation matrix  $W=(w_{ij})\in(\R_{>0})^{n\times n}$, we can write the PNG evolution in terms of transformations $r^i_j$ as 
\begin{equation}\label{png_matrix}
\mathcal{H}
=     r^n_n  \, \circ \,( r^{n}_{n-1} \circ r^{n-1}_n)\circ \cdots \circ\,(r^n_1\circ \cdots \circ r^1_n) \,\circ \cdots \, \circ \,(r^1_2\circ r^2_1) \, \circ \, r^1_1\, (W).
\end{equation}

The output matrix $\mathcal{H}$ will encode the heights of the line ensemble. Let us, also, remark that the nucleation matrix  $W=(w_{ij})$ need not actually be a matrix, but it can rather have any
shape. An example with which we will be occupied is when $W=(w_{ij}, \,1\le i+j-1\le n)$, corresponding to a triangular array. 
\subsection{Geometric png and local moves.}

Rewriting the transformations $\ell_{ij}$ in definition of \eqref{tabcd} by replacing $(+,\vee)$ with $(\times, +)$, i.e. performing a geometric lifting of the formulae therein, we clearly arrive to the geometric local transformations  $l_{ij}$ defined in \eqref{abcd}. Similarly, writing the transformations $r^i_j$ in \eqref{rdef} in geometric form we arrive to the transformations $\rho^i_j$ of \eqref{rodef}. The geometric version of 
matrix equation \eqref{png_matrix}, i.e.  
\begin{align*}
\mathcal{H}
= \text{gPNG}(W)
\,:=     \rho^n_n  \, \circ \,( \rho^{n}_{n-1} \circ \rho^{n-1}_n)\circ \cdots \circ\,(\rho^n_1\circ \cdots \circ \rho^1_n) \,\circ \cdots \, \circ \,(\rho^1_2\circ \rho^2_1) \, \circ \, \rho^1_1\, (W)
\end{align*}
can be taken to be the definition of the Geometric PNG corresponding to matrices. 
Notice that even though $\gpng$ is a composition of the same local moves $\rho^i_j$ and $l_{ij}$ employed by $\grsk$, the sequence of the compositions is different. However, it turns out that $\gpng(W)=\grsk(W)$, whenever $W$ is a matrix array.
\vskip 2mm
A reason why we are interested in the $\gpng$ formulation is given by the following proposition, which says that when $\gpng$ is applied to a triangular array
$W=(w_{ij}\colon i,j\geq 1,\,\,1\le i+j-1\le n)$, the elements $h_{ij}$, with $i+j=n+1$, of the output array are the point-to-point polymer partition function from $(1,1)$ to $(i,j)$, $i+j=n+1$. 
In the case of a triangular array $W=(w_{ij}\colon i,j\geq 1,\,\,1\le i+j-1\le n)$ the output array via gPNG is a triangular array  $\mathcal{H}=(h_{ij}\colon \,i,j\geq 1,\,1\le i+j-1\le n)$ defined by the 
sequence of local moves as
\begin{align}\label{def:tri}
\mathcal{H}= (\rho^1_n\circ\cdots\circ \rho^n_1)\circ \cdots \circ (\rho^1_2\circ \rho^2_1)\circ \rho^1_1 (W)
\end{align}
The properties of gPNG on triangular arrays as defined in \eqref{def:tri} will be the object of the following propositions and will eventually lead to an alternative formula for the Laplace transform of the
joint distribution of the point-to-point partition functions at a fixed time (cf. Theorem \ref{thm:law_via_PNG}). 
\begin{proposition}\label{p2p-gPNG}
Let  $W=W_n:=(w_{ij}\colon i,j\geq 1,\,\,1\le i+j-1\le n)$ be a triangular array and $\mathcal{H}=(h_{ij}\colon \,i,j\geq 1,\,1\le i+j-1\le n)= \text{gPNG}(W)
=(\rho^1_n\circ\cdots\circ \rho^n_1)\circ \cdots \circ (\rho^1_2\circ \rho^2_1)\circ \rho^1_1 (W)$ be the geometric PNG array. Then 
\[
h_{ij}= Z_{ij}=\sum_{\pi\in \Pi_{ij}} \prod_{(i,j)\in \pi} w_{ij},\qquad \text{for all}\,\,\,\, (i,j) \,\,\, \text{with} \,\,\, i+j=n+1,
\]
where $\Pi_{ij}$ is the set of directed paths (in matrix notation directed means down-right) starting at $(1,1)$ and ending at $(i,j)$.
\end{proposition}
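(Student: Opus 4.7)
The plan is to proceed by induction on $n$. For $n=1$, $\rho^1_1=l_{11}$ is the identity, so $h_{11}=w_{11}=Z_{11}$. Suppose the statement holds for $W_{n-1}$. The first $n-1$ rounds of the composition defining $\gpng(W_n)$, namely $(\rho^1_{n-1}\circ\cdots\circ\rho^{n-1}_1)\circ\cdots\circ\rho^1_1$, are syntactically identical to $\gpng(W_{n-1})$. Since every local move $l_{i-k,j-k}$ occurring inside some $\rho^i_j$ with $i+j\le n$ reads and modifies only positions of index sum $\le n$, these first $n-1$ rounds act exactly as $\gpng(W_{n-1})$ does on the subarray $W_{n-1}\subset W_n$, while leaving the outer anti-diagonal $\{(i,j):i+j=n+1\}$ untouched. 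By the inductive hypothesis, just before round $n$ begins we thus have $h_{ij}=Z_{ij}$ for all $(i,j)$ with $i+j=n$, and $h_{ij}=w_{ij}$ for all $(i,j)$ with $i+j=n+1$.

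Round $n$ applies $\rho^n_1,\rho^{n-1}_2,\ldots,\rho^1_n$ in this order. Each $\rho^{i'}_{j'}$ with $i'+j'=n+1$ is a composition of local moves $l_{i'-k,j'-k}$, each of which modifies only the positions of index sums $n+1-2k$ and $n-1-2k$ (for $k\ge 0$). In particular, no operation in round $n$ modifies any entry on the diagonal $\{i+j=n\}$, and on the outer anti-diagonal $\{i+j=n+1\}$ only $(i',j')$ itself is modified by $\rho^{i'}_{j'}$. Fix $(i,j)$ with $i+j=n+1$. Since $\rho^i_j=\cdots\circ l_{i-1,j-1}\circ l_{ij}$, the move $l_{ij}$ fires first inside $\rho^i_j$; at that moment $h_{i-1,j}=Z_{i-1,j}$, $h_{i,j-1}=Z_{i,j-1}$, and $h_{ij}=w_{ij}$ are all as the inductive hypothesis leaves them. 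The rule \eqref{abcd} then updates the lower-right corner to $w_{ij}(h_{i-1,j}+h_{i,j-1})=w_{ij}(Z_{i-1,j}+Z_{i,j-1})=Z_{ij}$, which is the standard partition-function recursion. The later moves $l_{i-1,j-1},l_{i-2,j-2},\ldots$ inside $\rho^i_j$ and all $\rho^{i'}_{j'}$ with $i'<i$ act only on deeper diagonals, so $h_{ij}$ remains equal to $Z_{ij}$.

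The delicate point is the book-keeping of which positions are modified when, because after round $n-1$ the entries on diagonals $i+j<n$ are no longer partition functions but carry ``lower-layer'' values corresponding to ensembles of non-intersecting paths, in the spirit of part {\bf 1} of Theorem \ref{thm:OSZ}. What makes the induction close cleanly is that the lower-right entry of $\begin{pmatrix}a&b\\c&d\end{pmatrix}\mapsto\begin{pmatrix}bc/(ab+ac)&b\\c&d(b+c)\end{pmatrix}$ depends only on $b,c,d$ and not on $a$, so the top-layer entries on the outer anti-diagonal evolve independently of the perturbed lower-layer entries.
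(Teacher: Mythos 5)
Your proof is correct and follows essentially the same route as the paper's: induction on $n$ via the recursive structure of gPNG, the observation that the moves of round $n$ only touch the north-west diagonals through the outer anti-diagonal (hence leave the diagonal $i+j=n$ intact), and the fact that the first move $l_{ij}$ of $\rho^i_j$ updates $h_{ij}$ to $w_{ij}(Z_{i-1,j}+Z_{i,j-1})=Z_{ij}$ by the polymer recursion. Your closing remark that the lower-right output of \eqref{abcd} depends only on $b,c,d$ and not on the perturbed entry $a$ is exactly the point the paper uses implicitly, and your write-up is if anything more careful about the book-keeping than the published proof.
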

\begin{proof}
We can recursively write gPNG in the form
\begin{align}\label{Hrec}
\mathcal{H}_n&:=\text{gPNG}(W_n)= \rho^n_1\circ\cdots \circ \rho^1_n 
\begin{pmatrix}
&&&& w_{1n}\\
&\mathcal{H}_{n-1}&&w_{2,n-1}\\
&&\iddots\\
&w_{n-1,2}\\
 w_{n1}
\end{pmatrix}\notag\\
&=\rho^n_1\circ\cdots \circ \rho^1_n  \begin{pmatrix}
h'_{11}&h'_{12}&\cdots&h'_{1,n-1}& w_{1n}\\
h'_{21}&h'_{22}&\cdots&w_{2,n-1}\\
\vdots&&\iddots\\
h'_{n-1,1}&w_{n-1,2}\\
 w_{n1}
\end{pmatrix}
\end{align}
Notice that the transformations $\rho^i_j$ only change the entries on the diagonals $(i,j), (i-1,j-1),\dots$. Moreover, if $h'_{k,n-k}=Z_{k,n-k}$, for $k=1,2...$, then transformation
$\rho^k_{n-k+1}$ will produce the entry
$
h_{k,n-k+1}=w_{k,n-k+1}(h'_{k-1,n-k+1}+h'_{k,n-k}),
$
which by the basic recursion of polymer partition functions equals $Z_{k,n-k+1}$.
\end{proof}
\begin{proposition}\label{thm:PNGvolume}
The gPNG, in logarithm variables, is volume preserving, that is, if $\mathcal{H}=(h_{ij}, \,1\le i + j -1\le n)=\text{gPNG}((w_{ij})_{1\leq i+j-1\leq n})$, then the map
$$ (\log w_{ij},\,1 \leq i+j-1\leq n) \mapsto  (\log h_{ij},\,1\le i+j-1\le n),
$$
has Jacobian equal to $\pm1$.
\end{proposition}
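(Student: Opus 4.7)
The plan is to reduce the statement to the volume-preserving property of a single local move $l_{ij}$ in logarithmic variables. This is exactly the idea behind the analogous result for $\grsk$ (Theorem \ref{thm:OSZ}, Part \textbf{3.}), and since $\gpng$ is built out of the same local moves as $\grsk$, merely composed in a different order, the very same mechanism will work here.

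First, I would unpack the definitions: by \eqref{def:tri}, $\gpng$ is a composition of maps $\rho^i_j$, and by \eqref{rodef} each $\rho^i_j$ is itself a composition of local moves $l_{ij}$. Hence, granting that each $l_{ij}$ is $\pm 1$-volume preserving in log variables, the full composition defining $\gpng$ inherits the same property. So the entire task reduces to verifying the Jacobian statement for a single $l_{ij}$.

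Next, for interior moves $l_{ij}$ with $i,j\ge 2$, only the entries $x_{i-1,j-1}$ and $x_{ij}$ change, by \eqref{abcd}. Setting $\xi=\log x_{i-1,j-1}$, $\eta=\log x_{ij}$ and regarding $\log b=\log x_{i-1,j}$ and $\log c=\log x_{i,j-1}$ as fixed parameters, the transformation in log coordinates reads
\begin{align*}
\xi' &= \log b+\log c-\xi-\log(b+c), \\
\eta' &= \eta+\log(b+c),
\end{align*}
whose Jacobian matrix $\bigl(\begin{smallmatrix} -1 & 0 \\ 0 & 1 \end{smallmatrix}\bigr)$ has determinant $-1$. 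For the boundary moves $l_{i1}$ and $l_{1j}$, only a single entry is replaced by its product with an adjacent entry, which in log variables is an additive shift, and thus the Jacobian is $+1$. On the remaining untouched coordinates each $l_{ij}$ acts as the identity, so globally each local move has Jacobian $\pm 1$.

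The conclusion is that $\gpng$, being a finite composition of maps of Jacobian $\pm 1$, itself has Jacobian $\pm 1$. I do not expect any real obstacle: the only point that deserves a line of justification is that the sequence of local moves in the definition \eqref{def:tri} is well-defined on the triangular array, in the sense that whenever some $l_{ij}$ is applied inside a $\rho^i_j$ the $2\times 2$ block it acts on lies entirely within the current array. This is transparent from the geometry of the diagonals along which $\rho^i_j$ propagates (already used implicitly in the proof of Proposition \ref{p2p-gPNG}), and it is precisely the point that distinguishes $\gpng$ from $\grsk$ only at the level of the order of composition, not of the individual volume-preserving factors.
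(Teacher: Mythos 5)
Your proof is correct and follows essentially the same route as the paper, which simply invokes the volume-preserving property of the maps $\rho^i_j$ (established in \cite{OSZ14} by exactly the local-move computation you carry out). Your explicit check that each $l_{ij}$ has Jacobian $\pm1$ in logarithmic coordinates, together with the observation that $\gpng$ differs from $\grsk$ only in the order of composition of these moves, is precisely the intended argument.
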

\begin{proof}
It follows immediately from the representation of gPNG in terms of transformations $\rho^i_j$ and their volume preserving properties. 
\end{proof}
In the setting of gRSK the {\it energy} of a matrix $X\in (\R_{>0})^{n\times m}$, was defined as
\begin{equation}\label{e}
\mathcal{E}(X)=\frac{1}{x_{11}}+\sum_{i,j} \frac{x_{i-1,j}+x_{i,j-1}}{x_{ij}},
\end{equation}
where 
the summation is over $1\le i\le n,\ 1\le j\le m$ with the conventions $x_{ij}=0$ for $i=0$ or $j=0$. We can extend this definition for general arrays, with the convention that
 $x_{ij}=0$ if $(i,j)$ does not belong in the index set $ind(X)$ of this array and in particular for triangular arrays $X=(x_{ij}, \,1\le i + j -1 \le n)$. Similarly to part {\bf 2.} of Theorem \ref{thm:OSZ} we have
 \begin{proposition}\label{prop:PNG_energy}
 Let $W=(w_{ij} \colon{1\le i + j -1 \le n})$ be a triangular array and $\mathcal{H}=\text{gPNG}(W)$. Then 
 \[
 \mathcal{E}(\mathcal{H}) = \sum_{i,j}\frac{1}{w_{ij}}
 \]
 \end{proposition}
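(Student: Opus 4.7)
The plan is to prove the identity via invariance of an ``energy-plus-input'' functional along the sequence (\ref{def:tri}) of local moves defining $\gpng$, in the spirit of the proof of part {\bf 2} of Theorem \ref{thm:OSZ} given in \cite{OSZ14}.

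At each intermediate stage of the $\gpng$ process I would partition the cells of the triangular index set into \emph{processed} (already touched by some $l_{kl}$ previously applied) and \emph{unprocessed} (still equal to $w_{ij}$), and introduce
$$\mathcal{F}(X):=\ind_{\{(1,1)\text{ processed}\}}\frac{1}{x_{11}}+\sum_{(i,j)\text{ processed}}\frac{x_{i-1,j}+x_{i,j-1}}{x_{ij}}+\sum_{(i,j)\text{ unprocessed}}\frac{1}{x_{ij}},$$
with the convention $x_{ij}=0$ outside the triangular array. By construction $\mathcal{F}(W)=\sum_{i,j}1/w_{ij}$ before any move is applied and $\mathcal{F}(\mathcal{H})=\mathcal{E}(\mathcal{H})$ once every cell has been processed, so the proposition reduces to showing that $\mathcal{F}$ is preserved by every single elementary move executed in (\ref{def:tri}).

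The invariance of $\mathcal{F}$ under $l_{ij}$ (the update sending $a,b,c,d$ to $a'=bc/(a(b+c)),\,b,\,c,\,d'=d(b+c)$) rests on two algebraic identities: the ``$d$-identity'' $(b+c)/d'=1/d$, which shows that the cell $(i,j)$ contributes $1/d$ both before (as an unprocessed input) and after (as a processed ratio term) the move, and the ``$a$-identity'' that the combination $1/a+a(1/b+1/c)$ is preserved under $a\mapsto a'$ (verified directly from $1/a'=a(b+c)/(bc)$ and $a'(b+c)/(bc)=1/a$), which absorbs the changes of the corner term $1/x_{11}$ together with the processed-ratio contributions at $(i-1,j)$ and $(i,j-1)$ that involve $a$. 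The boundary moves $l_{1j}$ and $l_{i1}$ are handled by degenerate one-dimensional versions of the same calculation.

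The main obstacle, precisely the ``slightly more modifications'' alluded to in the proof of Proposition \ref{polygoRSK}, is the bookkeeping required by the $\gpng$ order of (\ref{def:tri}): the same $l_{ij}$ may occur inside several $\rho^a_b$'s and therefore fire more than once on different array states, and one must verify at every single firing that the neighbours of $(i,j)$ are in the processed/unprocessed configuration required by the identities above (essentially, cells in earlier diagonals $i'+j'-1<i+j-1$ are already processed while $(i,j)$ itself is still unprocessed), as well as check that cells lying outside the triangular array drop out consistently via the convention $x_{ij}=0$. A case analysis on the structure of (\ref{def:tri}) confirms these points, and chaining the invariance across all moves yields $\mathcal{E}(\mathcal{H})=\sum_{i,j}1/w_{ij}$.
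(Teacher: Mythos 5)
Your overall strategy---an interpolating ``energy-plus-input'' functional that is shown to be invariant along the sequence of moves in \eqref{def:tri}---is exactly the strategy of the paper, which introduces the interpolating quantities $\mathcal{E}^k_n$ and proves $\mathcal{E}^{k+1}_n(\rho^k_{n-k+1}(X))=\mathcal{E}^k_n(X)$. The gap is in the granularity at which you claim invariance: your functional $\mathcal{F}$ is \emph{not} preserved by each individual local move $l_{ij}$, and the two identities you cite do not suffice to show that it is. Concretely, take an interior move $l_{ij}$ with $a=x_{i-1,j-1}$, $b=x_{i-1,j}$, $c=x_{i,j-1}$, $d=x_{ij}$, and suppose $(i-1,j-1)\neq(1,1)$ is already processed. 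Then the term of $\mathcal{F}$ at $(i-1,j-1)$ is $\frac{x_{i-2,j-1}+x_{i-1,j-2}}{a}$, not $\frac1a$, and under $a\mapsto a'=bc/(a(b+c))$ it changes by $(x_{i-2,j-1}+x_{i-1,j-2})\big(\frac{1}{a'}-\frac1a\big)$; the compensating change coming from the occurrences of $a$ in the numerators of the terms at $(i-1,j)$ and $(i,j-1)$ is $(a'-a)\big(\frac1b+\frac1c\big)=\frac1a-\frac{1}{a'}$. The net change, $(x_{i-2,j-1}+x_{i-1,j-2}-1)\big(\frac{1}{a'}-\frac1a\big)$, is nonzero in general. (Already for $n=4$, the move $l_{32}$ inside $\rho^3_2=l_{21}\circ l_{32}$ changes $\mathcal{F}$ by $(x_{11}-1)\big(\frac{1}{a'}-\frac1a\big)$.) There are further uncompensated changes of the same kind: when $(i,j)$ is an interior cell of the north-west diagonal being swept it has already been touched as the ``$a$'' of $l_{i+1,j+1}$, so its pre-move contribution is $\frac{b+c}{d}$ rather than $\frac1d$, and the processed cells $(i+1,j)$, $(i,j+1)$ carry $x_{ij}$ in their numerators and change when $d\mapsto d(b+c)$.

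These discrepancies do cancel, but only after composing \emph{all} the moves of a full diagonal sweep $\rho^k_{n-k+1}$: the residue left at $(i-1,j-1)$ by $l_{ij}$ is absorbed precisely when the next move $l_{i-1,j-1}$ rescales that same cell as its ``$d$''. This is why the paper works at the level of the composite $\rho^k_{n-k+1}$, uses the closed-form Bender--Knuth expression \eqref{bk1} for the composite update, and telescopes the three-term groups $\frac{x_{i-1,j}+x_{i,j-1}}{x_{ij}}+\frac{x_{i-1,j-1}}{x_{i,j-1}}+\frac{x_{i-1,j-1}}{x_{i-1,j}}$ along the diagonal via the pair of identities $\frac{x'_{i-1,j}+x'_{i,j-1}}{x'_{ij}}=x_{ij}\big(\frac{1}{x_{i+1,j}}+\frac{1}{x_{i,j+1}}\big)$ and $x'_{ij}\big(\frac{1}{x'_{i+1,j}}+\frac{1}{x'_{i,j+1}}\big)=\frac{x_{i-1,j}+x_{i,j-1}}{x_{ij}}$. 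To repair your argument you must either pass to this coarser, per-diagonal invariance (essentially reproducing the paper's proof), or replace $\mathcal{F}$ by a finer functional with an intermediate state for the cell that has been touched as an ``$a$'' but not yet as a ``$d$''; the binary processed/unprocessed bookkeeping you propose cannot work as stated, and the ``case analysis'' you defer would in fact refute the claimed single-move invariance.
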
 
\begin{proof}
The proof is similar to the one of Theorem 3.2 of \cite{OSZ14} and it is based on the invariance of the energy under transformations $\rho^i_j$. 
For a triangular array $X=(x_{ij}\,,\,  i+ j \le n +1)$ we define
\[
\mathcal{E}^k_n(X) =\frac{1}{x_{11}}+\sum_{\substack{ i+j=n+1,\, i< k \\ \text{or}\,\,i+j\le n }} \frac{x_{i-1,j}+x_{i,j-1}}{x_{ij}} + \sum_{i+j=n+1,\,i\ge k} \frac{1}{w_{ij}}.
\]
Clearly, $\mathcal{E}^{n+1}_n(X)=\mathcal{E}(X)$ and 
$\mathcal{E}^1_n(X)=\mathcal{E}((x_{ij})_{1\le i + j\le n}) + \sum_{i+j=n+1} 1/w_{ij}$.
Let us assume that the claim is valid for $n-1$, i.e. $\mathcal{E}((x_{ij})_{1\le i +j\le n}) = \sum_{1\le i + j\le n} 1/w_{ij}$. Using the recursive structure of
gPNG, cf \eqref{Hrec}, it is enough to show that 
\begin{align}\label{energy-induction}
 \mathcal{E}^{k+1}_n(\rho^{k}_{n-k+1}(X)) =\mathcal{E}^{k}_n(X) , \quad \text{for} \,\, k=1,...,n. 
\end{align}
Let us denote by $X'=(x'_{ij})_{1\le i \le j \le n}= \rho^k_{n-k+1}(X)$. When $k=1$ we have that $x'_{ij}=x_{ij}$ for all $(i,j)\neq(1,n)$ and $x'_{1n}=w_{1n}x_{1,n-1}$. Therefore,
\begin{align*}
\mathcal{E}^1_n(X)&= \frac{1}{x_{11}}+\sum_{ i+j\le n } \frac{x_{i-1,j}+x_{i,j-1}}{x_{ij}} + \sum_{i+j=n+1,\,i\ge 1} \frac{1}{w_{ij}}\\
&=\frac{1}{x_{11}}+\sum_{ i+j\le n } \frac{x_{i-1,j}+x_{i,j-1}}{x_{ij}} +\frac{1}{w_{1n}}+ \sum_{i+j=n+1,\,i\ge 2} \frac{1}{w_{ij}}\\
&=\frac{1}{x_{11}}+\sum_{ i+j\le n } \frac{x_{i-1,j}+x_{i,j-1}}{x_{ij}} + \frac{x_{1,n-1}}{x'_{1n}} +\sum_{i+j=n+1,\,i\ge 1} \frac{1}{w_{ij}}\\
&=\mathcal{E}^2_n(\rho^1_n(X)).
\end{align*}
Let us now check \eqref{energy-induction} for $2\le k$ and $k\neq \lfloor n/2 \rfloor+1$, if $n$ is odd. It suffices to show that for 
\begin{align*}
&\sum{}^{'} \Big( \,\frac{x'_{i-1,j}+x'_{i,j-1}}{x'_{ij}} +
\frac{x'_{i-1,j-1}}{x'_{i,j-1}} + \frac{x'_{i-1,j-1}}{x'_{i-1,j}}\Big)\\
&=\frac{1}{w_{k,n-k+1}} + 
\sum{}^{''} \Big( \,\frac{x_{i-1,j}+x_{i,j-1}}{x_{ij}} +
\frac{x_{i-1,j-1}}{x_{i,j-1}} + \frac{x_{i-1,j-1}}{x_{i-1,j}}\Big)
\end{align*}
where summation $\sum'$ is over $ (i,j)=(k-r,n-k+1-r)$, with $0\le r \le k\wedge (n-k+1)+1$ and $\sum{}^{''}$ is similar except that $1\le r \le k\wedge (n-k+1)+1$.
This is confirmed by the relations, valid for $(i,j)=(k-r,n-k+1-r)$, $r=1,...,k\wedge (n-k+1)+1$
\begin{align*}
\frac{x'_{i-1,j}+x'_{i,j-1}}{x'_{i,j}} &=\frac{x_{i-1,j}+x_{i,j-1}}{x'_{i,j}} = x_{ij}\Big(\frac{1}{x_{i+1,j}} + \frac{1}{x_{i,j+1}}\Big),\\
x'_{ij}\Big(\frac{1}{x'_{i+1,j}} + \frac{1}{x'_{i,j+1}}\Big) &= x'_{ij}\Big(\frac{1}{x_{i+1,j}} + \frac{1}{x_{i,j+1}}\Big)=  \frac{x_{i-1,j}+x_{i,j-1}}{x_{i,j}}. \\
\frac{x'_{k-1,n-k+1}+x'_{k,n-k}}{x'_{k,n-k+1}} &=\frac{1}{w_{k,n-k+1}},
\end{align*}
which follow from \eqref{bk1} and the fact that $x'_{i\pm1,j}=x_{i\pm1,j}$ and  $x'_{i,j\pm 1}=x_{i,j\pm 1}$. In the case that $n$ is odd and $k= \lfloor n/2 \rfloor+1$, we use the same set of relations and in addition
that 
$$x'_{11}=\frac{1}{x_{11}}\Big(\frac{1}{x_{12}}+\frac{1}{x_{21}}\Big)^{-1}.
$$
\end{proof}
The following proposition is the analogue of the fact that geometric RSK preserves the (geometric) type of an input matrix cf. Proposition\ref{grsk_type}.
\begin{proposition}\label{gPNG_type}
Let $W=(w_{ij})_{1\le i + j -1\le n}$ be a nucleation array and $\mathcal{H}=(h_{ij})_{1\le i + j -1\le n}=\text{gPNG}(W)$.  Then for every
$p,q$ such that $p+q=n+1$ or $p+q=n$ it holds that
\begin{align}\label{prodPNG}
\prod_{r =0}^{p\wedge q -1} h_{p-r,q-r}= \prod_{i\leq p, \,j\leq q} w_{ij},
\end{align}
where we notice that the rightmost product is over all the input elements $w_{ij}$ inside the rectangle with downright corner $(p,q)$.
\end{proposition}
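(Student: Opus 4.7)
The plan is to prove the proposition by induction on $n$, the base case $n = 1$ being trivial. For the inductive step, the key parity observation is that each local move $l_{ab}$ only modifies entries on the single antidiagonal $\{(a', b'): a' - b' = a - b\}$, and therefore each $\rho^i_{n+1-i}$ in stage $n$ affects only entries with $a' - b' = 2i - n - 1$. Setting $2i - n - 1 = p - q$, the solution $i = p + 1/2$ is not an integer when $p + q = n$, so no stage-$n$ move touches the diagonal through $(p, q)$; whereas when $p + q = n + 1$ only $\rho^p_q$ does.

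\textbf{Case $p + q = n$.} The entries on the diagonal through $(p, q)$ remain at their $\mathcal{H}_{n-1}$ values, and applying the inductive hypothesis to $(p, q)$ lying on the outer antidiagonal of the size-$(n-1)$ array yields the desired identity directly.

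\textbf{Case $p + q = n + 1$.} Assume $p \le q$ without loss of generality and write $\rho^p_q = l_{1, q-p+1} \circ l_{2, q-p+2} \circ \cdots \circ l_{p-1, q-1} \circ l_{p, q}$. Denote $h' = \mathcal{H}_{n-1}$ and set $D_0 := w_{p, q}$ (the newly-added outer entry), $D_r := h'_{p-r, q-r}$ for $r \ge 1$, together with the reader variables $A_s := h'_{p-s-1, q-s}$ and $B_s := h'_{p-s, q-s-1}$. A further parity check shows that these readers lie on antidiagonals opposite in parity to all stage-$n$ modified antidiagonals, so they retain their $\mathcal{H}_{n-1}$ values throughout stage $n$. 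Using \eqref{abcd}, the move $l_{p-s, q-s}$ replaces $X_{p-s, q-s}$ by $X_{p-s, q-s}(A_s + B_s)$ and $X_{p-s-1, q-s-1}$ by $A_s B_s / (D_{s+1}(A_s + B_s))$, so that the consecutive $(A_s + B_s)$ factors telescope. A direct induction on $s$ then yields
\begin{equation*}
\prod_{r=0}^{s+1} X^{(s)}_{p-r, q-r} = \frac{w_{p,q} \prod_{u=0}^{s} A_u B_u}{\prod_{u=1}^{s+1} D_u}, \qquad s = 0, 1, \ldots, p - 2,
\end{equation*}
where $X^{(s)}$ denotes the state after the first $s + 1$ bulk moves of $\rho^p_q$. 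The final boundary move $l_{1, q-p+1}$ simply multiplies $X_{1, q-p+1}$ by the unmodified value $X_{1, q-p} = B_{p-1}$, producing
$\prod_{r=0}^{p-1} h_{p-r, q-r} = w_{p, q} \cdot \prod_{u=0}^{p-2} A_u \cdot \prod_{u=0}^{p-1} B_u \big/ \prod_{u=1}^{p-1} D_u$.

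It remains to evaluate the three sub-products by applying the inductive hypothesis to the diagonals in $\mathcal{H}_{n-1}$ through $(p-1, q)$, $(p, q-1)$, and $(p-1, q-1)$, each of which satisfies the appropriate $p' + q' \in \{n-1, n\}$ condition at level $n - 1$. These respectively yield $\prod_u A_u = \prod_{i \le p-1,\, j \le q} w_{ij}$, $\prod_u B_u = \prod_{i \le p,\, j \le q-1} w_{ij}$, and $\prod_u D_u = \prod_{i \le p-1,\, j \le q-1} w_{ij}$. Substituting and simplifying (the common $[1, p-1] \times [1, q-1]$ product cancels) collapses the expression to $\prod_{i \le p,\, j \le q} w_{ij}$, as required. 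The case $p = q$ is handled identically with the final move $l_{1, 1}$ being the identity, and the case $p > q$ follows by symmetry (swap the roles of rows and columns in the computation). The main technical obstacle is verifying the telescoping identity, which is algebraically clean but requires careful bookkeeping of the boundary conventions for the moves $l_{1, b}$ and $l_{a, 1}$.
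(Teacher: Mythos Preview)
Your proof is correct and follows essentially the same strategy as the paper's: induction on $n$, the parity observation that stage-$n$ moves only touch diagonals of one parity (so the case $p+q=n$ is immediate from the inductive hypothesis), and for $p+q=n+1$ a telescoping computation along $\rho^p_q$ followed by three applications of the inductive hypothesis to the diagonals through $(p-1,q)$, $(p,q-1)$, and $(p-1,q-1)$. The only cosmetic difference is that the paper writes each output entry $h_{p-r,q-r}$ directly in Bender--Knuth form and then multiplies (obtaining the same telescoping of the $(A_s+B_s)$ factors), whereas you track the running product through the successive local moves $l_{p-s,q-s}$; the resulting identity $w_{p,q}\,\dfrac{\prod_{i\le p-1,\,j\le q} w_{ij}\,\prod_{i\le p,\,j\le q-1} w_{ij}}{\prod_{i\le p-1,\,j\le q-1} w_{ij}}=\prod_{i\le p,\,j\le q} w_{ij}$ and its derivation are identical in both arguments.
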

\begin{proof}
The proof goes by induction, so let us assume it is valid for $n-1$. Let 
\[
\mathcal{H}' :=(h'_{ij})_{1\le i + j-1\le n-1}=\text{gPNG}((w_{ij})_{1\le i + j-1 \le n-1})
\]
then
\begin{align*}
\mathcal{H}&= \rho^n_1\circ \rho^{n-1}_2\circ \cdots \circ \rho^2_{n-1}\circ \rho^1_n\, 
\begin{pmatrix}
&&&& w_{1n}\\
&\mathcal{H}'&&w_{2,n-1}\\
&&\iddots\\
&w_{n-1,2}\\
 w_{n1}
\end{pmatrix}\notag\\
&=\rho^n_1\circ \rho^{n-1}_2\cdots \circ \rho^2_{n-1} \circ \rho^1_n  \begin{pmatrix}
h'_{11}&h'_{12}&\cdots&h'_{1,n-1}& w_{1n}\\
h'_{21}&h'_{22}&\cdots&w_{2,n-1}\\
\vdots&&\iddots\\
h'_{n-1,1}&w_{n-1,2}\\
 w_{n1}
\end{pmatrix}
\end{align*}
Notice that the 
transformations $\rho_k^{n-k+1}, k=1,...,n,$ only change the entries on the diagonals $\{ (i,j) \colon j-i= n-2k+1 \}$ with $k=1,...,n,$ while leaving invariant the in between diagonals $\{ (i,j) \colon j-i= n-2k \}$ with $k=1,...,n-1$. For this reason we have that $h_{p-r,q-r}=h'_{p-r,q-r}$ for 
all $p,q$ such that $p+q=n$ and $r\geq 0$. Therefore, \eqref{prodPNG} is immediately satisfied since  by the 
inductive hypothesis 
\begin{align*}
\prod_{r=0}^{p\wedge  q-1}h'_{p-r,q-r} = \prod_{i\leq p, j\leq q} w_{ij},\qquad \text{for} \quad p+q=n.
\end{align*}
Let us now check \eqref{prodPNG} when $p,q$ are such that $p+q=n+1$. We consider the product
\begin{align*}
\prod_{r =0}^{p\wedge q -1} h_{p-r,q-r}= h_{p,q} \prod_{r =1}^{p\wedge q -1} h_{p-r,q-r}
\end{align*}
Using the definition of $\rho^i_j$, cf. \eqref{rodef} and \eqref{bk1}, we can write the above as
\begin{align*}
  w_{p,q}( h'_{p-1,q}+h'_{p,q-1}) \prod_{r=1}^{p\wedge q -1} \frac{1}{h'_{p-r,q-r}} \, \frac{h'_{p-r+1,q-r}\,h'_{p-r,q-r+1}}{h'_{p-r+1,q-r}+h'_{p-r,q-r+1}} \, (h'_{p-r-1,q-r}+h'_{p-r,q-r-1}).
\end{align*}
The terms $h'_{i+1,j}+h'_{i,j+1}$ and $h'_{i-1,j}+h'_{i,j-1}$ will telescope and so the above equals
\begin{align*}
w_{p,q} \, \frac{\prod_{r=1}^{p\wedge q-1} h'_{p-r+1,q-r} \,\prod_{r=1}^{p\wedge q-1} h'_{p-r,q-r+1} }{ \prod_{r=1}^{p\wedge q-1} h'_{p-r,q-r}}.
\end{align*}
Using the inductive hypothesis this equals
\begin{align*}
w_{p.q} \frac{\prod_{i\leq p, j\leq q-1} w_{i,j}\, \prod_{i\leq p-1, j\leq q} w_{i,j}}{ \prod_{i\leq p-1,j\leq q-1} w_{ij}}
= \prod_{i\leq p, j\leq q} w_{i,j}
\end{align*}
and this proves the claim.
\end{proof}
Combining the above propositions we arrive easily at the following theorem, which gives an alternative formula for the joint law 
of all the point-to-point partition functions at a fixed time $n$ of a directed polymer with log-gamma disorder:
\begin{theorem}\label{thm:law_via_PNG}
We consider the log-gamma measure on arrays $W=(w_{ij},\,1\le i +j -1\le n)$ defined by
\begin{align}\label{Gamma_meas}
\bbP(\dd W):= \prod_{1\le i + j -1 \le n} \Gamma(\ga_i+\hat\ga_j)^{-1}\,w_{ij}^{-\ga_i+\hat\ga_j} e^{-1/w_{ij}} \,\,\frac{\dd w_{ij}}{w_{ij}}, \qquad \gamma>0,
\end{align}
The joint distribution of the log-gamma polymer partition functions at time $n$
\[
\bZ_{n}:=\Big( Z_{p,q},\, p+q=n+1\Big),
\]
is given by
\begin{align}\label{Phi-function}
&\bbP(\bZ_n\in \dd \bx)=\frac{1}{\prod_{i,j} \Gamma(\ga_i+\hat\ga_j)}
\prod_{i=1}^n\frac{\dd x_{i}}{x_{i}}   \notag\\
&  \times \int_{\bbT(\bx)} \prod_{k=1}^{n} \left(\frac{\prod_{ j-i=n-2k+1} \,\,t_{ij}}{\prod_{ j-i=n-2k}\,\, t_{ij}} \right)^{-\hat\ga_{n-k+1}}
\prod_{k=1}^{n} \left(\frac{\prod_{ j-i=n-2k+1} \,\,t_{ij}}{\prod_{ j-i=n-2k+2}\,\, t_{ij}} \right)^{-\ga_{k}} 
 e^{-\mathcal{E}(\bbT(\bx))} \prod_{ i+j\le n} \frac{\dd t_{ij}}{t_{ij}} \notag\\
 &=: \frac{1}{\prod_{i,j} \Gamma(\ga_i+\hat\ga_j)} \, \Phi^{(n)}_{\ga,\hat\ga}(x_1,...,x_n) \prod_{i=1}^n \frac{\dd x_{i}}{x_{i}}
\end{align}
where $\bbT(\bx):=\big((t_{ij}),\,1\le i\le j \le n\,\,\text{with}\,\, t_{ij}=x_i \,\, \text{when}\,\,i+j=n+1 \big)$ and we also follow the convention 
in the above formula that if $(i,j)$ do not satisfy $1\leq i+j-1\leq n$, then $t_{ij}=1$.
\end{theorem}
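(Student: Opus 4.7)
The plan is to perform the change of variables $W = (w_{ij})_{1 \leq i+j-1 \leq n} \mapsto \mathcal{H} = (h_{ij})_{1 \leq i+j-1 \leq n} := \gpng(W)$ in the log-gamma density \eqref{Gamma_meas} and then integrate out the interior variables $(h_{ij})_{i+j \leq n}$, leaving the joint density of the antidiagonal $(h_{ij})_{i+j=n+1}$, which by Proposition \ref{p2p-gPNG} coincides with the vector $\bZ_n$ of partition functions. Two of the three ingredients needed for the change of variables are already at hand: Proposition \ref{thm:PNGvolume} says that the Jacobian is $\pm 1$ in logarithmic coordinates, so $\prod_{ij} \dd w_{ij}/w_{ij} = \prod_{ij} \dd h_{ij}/h_{ij}$, while Proposition \ref{prop:PNG_energy} gives $\sum_{ij} 1/w_{ij} = \mathcal{E}(\mathcal{H})$, which produces the exponential factor in \eqref{Phi-function}.

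The remaining algebraic step is to express $\prod_{ij} w_{ij}^{-\ga_i-\hat\ga_j}$ in terms of the $h_{ij}$'s by means of Proposition \ref{gPNG_type}. The natural first move is to split
$$
\prod_{1 \leq i+j-1 \leq n} w_{ij}^{-\ga_i - \hat\ga_j}
= \prod_{i=1}^{n} \Big(\prod_{j=1}^{n-i+1} w_{ij}\Big)^{-\ga_i}
  \prod_{j=1}^{n} \Big(\prod_{i=1}^{n-j+1} w_{ij}\Big)^{-\hat\ga_j}.
$$
For a fixed row $i$, the row product can be written as a ratio of two rectangle products,
$$
\prod_{j=1}^{n-i+1} w_{ij}
= \frac{\prod_{a \leq i,\, b \leq n-i+1} w_{ab}}{\prod_{a \leq i-1,\, b \leq n-i+1} w_{ab}},
$$
and Proposition \ref{gPNG_type} applied to each factor (with $(p,q)=(i,n-i+1)$, whence $p+q=n+1$, and $(p,q)=(i-1,n-i+1)$, whence $p+q=n$) identifies the numerator with a product of $h_{ab}$'s along the antidiagonal $\{b-a = n-2i+1\}$ and the denominator with one along $\{b-a = n-2i+2\}$. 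Relabelling $k=i$ reproduces exactly the $\ga$-factor in \eqref{Phi-function}. The column-by-column computation is completely symmetric: the same argument with $(p,q)=(n-j+1,j)$ and $(p,q)=(n-j+1,j-1)$ yields antidiagonals $\{b-a=n-2k+1\}$ and $\{b-a=n-2k\}$ upon setting $k=n-j+1$, giving the $\hat\ga$-factor.

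Assembling the three pieces yields the push-forward density $\bbP \circ \gpng^{-1}(\dd \mathcal{H})$ on the full triangular array. Integrating out the interior variables $(h_{ij})_{i+j \leq n}$ and setting $x_i := h_{i,n-i+1}$ on the boundary antidiagonal produces the claimed identity. The only mild subtlety is the bookkeeping of indices near the boundary of the triangular array --- terms such as $h_{ab}$ with $a+b>n+1$ that formally appear in some of the antidiagonal products for extreme values of $k$ --- which is cleanly absorbed by the convention $t_{ij}=1$ for $(i,j)\notin\{1\leq i+j-1\leq n\}$ adopted in the statement. This is the main, and only, obstacle, and it is purely combinatorial rather than analytic.
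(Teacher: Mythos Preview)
Your proposal is correct and follows essentially the same route as the paper's proof: both combine Propositions \ref{thm:PNGvolume}, \ref{prop:PNG_energy}, \ref{gPNG_type} and \ref{p2p-gPNG} to push forward the log-gamma measure under $\gpng$ and then integrate out the interior variables. Your derivation of the row/column products as ratios of rectangle products (and hence as ratios of antidiagonal products via Proposition \ref{gPNG_type}) is exactly what the paper records in the displayed identities just before invoking Proposition \ref{p2p-gPNG}; the paper simply states those identities without spelling out the telescoping, so your write-up is, if anything, slightly more explicit.
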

\begin{proof}
Proposition \ref{prop:PNG_energy} shows that $\sum_{i,j} 1/w_{ij} = \mathcal{E}(\bbT)$, where $\bbT=\bbT(\bx)$ when $t_{ij}$ are set equal
to $x_i$ for $i+j=n+1$. Proposition \ref{thm:PNGvolume} shows that $\prod_{i,j} \dd w_{ij}/w_{ij} = \prod_{ij} \dd t_{ij}/t_{ij}$. Finally, from
Proposition \ref{gPNG_type} shows that
\begin{align*}
\prod_{\ell=1}^k w_{\ell,n-k+1} = \frac{\prod_{ j-i=n-2k+1} \,\,t_{ij}}{\prod_{ j-i=n-2k}\,\, t_{ij}}
\quad\text{and} \quad
\prod_{\ell=1}^{n-k+1} w_{k,\ell} = \frac{\prod_{ j-i=n-2k+1} \,\,t_{ij}}{\prod_{ j-i=n-2k+2}\,\, t_{ij}},
\end{align*}
which raising to the powers $-\hat\ga_{n-k+1}$ and $-\ga_{k}$, respectively, gives the corresponding factors in \eqref{Phi-function}.
Putting the pieces together provides the push forward of the log-gamma measure on the gPNG array, which when integrated, fixing the
entries $t_{ij}=x_i$ for $i+j=n+1$ leads to distribution \eqref{Phi-function}. The proof is now completed by recalling Proposition \ref{p2p-gPNG}
which identifies the entries $\{t_{ij}\colon i+j=n+1\}$ of the gPNG array with the point-to-point partition functions of the directed polymer.
\end{proof}
\begin{remark}\rm\rm
It is worth noting that the function  $\Phi^{(n)}_{\ga,\hat\ga}(x_1,...,x_n)$ appears to have an integral structure of the same flavour as that of Whittaker functions, however the structure of the potential $\mathcal{E}(\bbT(\bx))$ is different. This is indicated in Figure \ref{int-structure} where the convolutions in $\mathcal{E}(\bbT(\bx))$ 
are depicted (compare also with the analogous diagram for Whittaker functions in Figure \ref{GT_Whittaker}.)
\end{remark}
\begin{figure}[t]
 \begin{center}
\begin{tikzpicture}[scale=1.0]
\draw[thick][->] (0.5,3.5)--(0.9,3.1);
\draw[thick][->] (3.1,3)--(3.8,3);
\draw[thick][->] (2.1,3)--(2.9,3);
\draw[thick][->] (1.1,3)--(1.9,3);
\draw[thick][->] (2.1,2)--(2.9,2);
\draw[thick][->] (1.1,2)--(1.9,2);
\draw[thick][->] (1.1,1)--(1.9,1);
\draw[thick][->] (3,2.9)--(3,2.1);
\draw[thick][->] (2,2.9)--(2,2.1);
\draw[thick][->] (1,2.9)--(1,2.1);
\draw[thick][->] (1,1.9)--(1,1.1);
\draw[thick][->] (2,1.9)--(2,1.1);
\draw[thick][->] (1,0.9)--(1,0.1);

\draw [fill] (1,1) circle [radius=0.05];
\draw [fill] (2,3) circle [radius=0.05];
\draw [fill] (3,3) circle [radius=0.05];
\draw [fill] (4,3) circle [radius=0.1];
\draw [fill] (1,2) circle [radius=0.05];
\draw [fill] (2,2) circle [radius=0.05];
\draw [fill] (3,2) circle [radius=0.1];
\draw [fill] (1,3) circle [radius=0.05];
\draw [fill] (2,1) circle [radius=0.1];
\draw [fill] (1,0) circle [radius=0.1];

\node at (3.3,3.2) {\small{{\bf $t_{13}$}}};
\node at (2.3,3.2) {\small{{\bf $t_{12}$}}};
\node at (1.3,3.2) {\small{{\bf $t_{11}$}}};
\node at (1.3,2.2) {\small{{\bf $t_{21}$}}};
\node at (2.3,2.2) {\small{{\bf $t_{22}$}}};
\node at (1.3,1.2) {\small{{\bf $t_{31}$}}};
\node at (4.3,3.2) {\small{{\bf $x_1$}}};
\node at (3.3,2.2) {\small{{\bf $x_2$}}};
\node at (2.3,1.2) {\small{{\bf $x_3$}}};
\node at (1.3,0.2) {\small{{\bf $x_4$}}};
\end{tikzpicture}

\end{center}  
\caption{ \small This figure shows the structure of the potential $\mathcal{E}(\bbT(\bx))$ appearing in the function $ \Phi^{(n)}_\ga(x_1,...,x_n)$ in \eqref{Phi-function}.
 The arrows in the picture show that the bonded variables are convoluted in the integrals, i.e. there are exponential weights $\exp(-t_{ij}/t_{i+1,j})$ or 
$\exp(-t_{ij}/t_{i,j+1})$.  The pointer of each arrow indicates the variable that lies in the denominator in the previous fractions. The arrow pointing towards $t_{11}$ corresponds to the term $\exp(-1/t_{11})$.  }  \label{int-structure}
\end{figure}
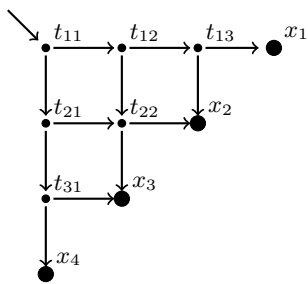

\section{Towards the Airy process}\label{sec:Airy}
In this section we formally check the validity of the following conjecture
\begin{conjecture}\label{thm:claim}
Let 
  \begin{align}\label{pointN2/3}
  	 (m_1,n_1) =(N-t_1N^{2/3},N+t_1N^{2/3}) \quad\text{and}\quad
  	 (m_2,n_2)=(N+t_2N^{2/3},N-t_2N^{2/3}),
  \end{align}
with $t_1,t_2>0$, and 
$Z_{(m_1,n_1)}, Z_{(m_2,n_2)}$ be point-to-point partition functions of a $(0,\gamma)-$log-gamma polymer 
(i.e. an $(\ga,\hat\ga)$-log-gamma polymer with $\ga_i=0$ and $\hat\ga_j=\gamma$ for $i,j\geq 1$).
If $\Ai(\cdot)$ denotes the Airy process (which will be formally defined in Section \ref{auxres}) and the functions
$F,G$ are defined as
\begin{align}\label{FG}
  	G(z) := \log \Gamma(z)-\log \Gamma(\gamma-z)+f_{\gamma}z \quad \text{and} \quad
  	F(z) := \log \Gamma(z)+\log \Gamma(\gamma-z),
\end{align}
 with $f_{\gamma}=-2\Psi(\gamma/2)$ and $\Psi(x) =[\log \Gamma]'(x)$ the digamma function, then
\begin{align*}
\left(\frac{\log Z_{(m_1,n_1)} -Nf_\gamma}{(c_1^{\gamma})^{-1}\,N^{1/3}}, \frac{\log Z_{(m_2,n_2)}-Nf_\gamma}{(c_{1}^{\gamma})^{-1}\,N^{1/3}}\right)
\xrightarrow[N\to\infty]{(d)} \big(\Ai(-c_3^\gamma t_1) -c_2^\gamma \, t_1^2, \, \Ai(c_3^\gamma t_2) -c_2^\gamma \,  t_2^2\big)
\end{align*}
where 
 $c_1^{\gamma}=(-\frac{G'''(\gamma/2)}{2})^{-1/3} $,  $c_2^{\gamma}=-\frac{c_1^{\gamma} (F''(\gamma/2))^2}{2 G'''(\gamma/2)}$ and $c_3^\gamma=-\frac{F''(\gamma/2)}{G'''(\gamma/2)}$ .
 \end{conjecture}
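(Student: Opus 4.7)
The plan is to start from the contour formula \eqref{2p_formula} specialized to $\ga_i = 0$, $\hat\ga_j = \gamma$, and perform a joint saddle-point analysis about the critical point $(\lambda_i,\mu_j) = (\gamma/2,\gamma/2)$. First I would set $u_k = \exp(-N f_\gamma - (c_1^\gamma)^{-1} N^{1/3} r_k)$ for $k=1,2$ and invoke the standard Laplace-transform trick (the fact that $e^{-e^{x}}$ behaves like $\ind_{\{x\le 0\}}$ when its argument is sharply rescaled), so that $\bbE[e^{-u_1 Z_{(m_1,n_1)} - u_2 Z_{(m_2,n_2)}}]$ converges, in the scaling limit, to the joint cumulative distribution function appearing on the right-hand side of the statement.

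Next, the effective action per contour variable must be recast in terms of the function $G$ of \eqref{FG}. The $\lambda$-integration carries a factor $\Gamma(\lambda)^{m_1}$, and the $\mu$-integration, after the shift $\mu=\nu+\gamma$, carries $\Gamma(\nu)^{n_2}$; the key manipulation is the reflection identity $\Gamma(z)\Gamma(1-z) = \pi/\sin(\pi z)$, which (up to oscillatory factors absorbed into the steepest-descent contour) rewrites the dominant exponent as $N\,G(z) + O(N^{2/3})$. The choice $f_\gamma=-2\Psi(\gamma/2)$ is tailored so that $G'(\gamma/2)=0$, while the symmetry of $F$ gives $F'(\gamma/2)=0$ and $F''(\gamma/2)>0$. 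Under the rescaling $\lambda = \gamma/2 + c_1^\gamma N^{-1/3} \tilde\lambda$, $\mu = \gamma/2 + c_1^\gamma N^{-1/3} \tilde\mu$, with contours deformed to pass through $\gamma/2$ along steepest-descent directions, the Taylor expansion
\[
G(\gamma/2 + w) \;=\; G(\gamma/2) + \tfrac{1}{6} G'''(\gamma/2)\, w^3 + \cdots
\]
produces the Airy cubic $-\tilde\lambda^3/3$ by the definition of $c_1^\gamma$. The off-diagonal scales $n_1-n_2 = (t_1+t_2)N^{2/3}$ and $m_2-m_1 = (t_1+t_2)N^{2/3}$, inserted into the Taylor expansions of $F$ and $G$, generate the quadratic drifts $-c_2^\gamma t_k^2$ and the relative spatial shifts $\mp c_3^\gamma t_k$ between the two saddles, exactly reproducing the arguments $\Ai(-c_3^\gamma t_1)$ and $\Ai(c_3^\gamma t_2)$ together with the parabolic corrections.

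The final step is to match the rescaled double contour integral to the two-point distribution of the Airy process. Following the strategy indicated in the introduction, I would expand the integrand into a Mellin–Barnes / Fredholm-type series in which each summand is a small minor built from the Gamma-function factors; after rescaling, the bound \eqref{gamma_asympto} allows pointwise convergence of each term to the corresponding term in the Fredholm expansion of $\bbP(\Ai(-c_3^\gamma t_1)-c_2^\gamma t_1^2 \le r_1,\; \Ai(c_3^\gamma t_2)-c_2^\gamma t_2^2 \le r_2)$ written via the extended Airy kernel.

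The principal obstacle—and the reason the statement is only a conjecture—is the cross term $\prod_{i,j}\Gamma(\lambda_i+\mu_j)/\Gamma(\gamma)$, which couples the two sets of variables delicately near the joint critical point: as $\lambda_i+\mu_j\to\gamma$ the argument of $\Gamma$ is bounded away from its poles but not uniformly in the tails of the contours. Each individual term in the Fredholm-like series passes to the expected limit, but obtaining a uniform tail estimate that justifies interchanging the limit with the summation is exactly what fails. Overcoming this would presumably require reorganising the expansion to reveal the hidden determinantal structure that the present contour formula masks; I would therefore carry out Steps 1–3 rigorously and leave the uniform control of the cross-term tails as the open step, which is why the convergence in the conjecture is derived only formally.
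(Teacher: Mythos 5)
Your road map coincides in substance with the paper's own (formal) verification: reduce to the Laplace transform via Lemma \ref{lem:borodin} with the choice \eqref{u1u2}, identify the action $N\tilde G+t_kN^{2/3}\tilde F$ with critical point $\gamma/2$ where $G'=G''=F'=0$, rescale by $N^{1/3}$ along steepest-descent contours to produce the Airy cubic and the parabolic/spatial shifts encoded by $c_1^\gamma,c_2^\gamma,c_3^\gamma$, match term by term with the Fredholm expansion of the extended Airy kernel, and flag the cross product of Gamma functions as the step that cannot be controlled uniformly. Two points in your ordering and bookkeeping would need repair before the rigorous parts go through. First, you cannot run a saddle-point analysis directly on \eqref{2p_formula}: that formula has $m_1\approx N$ variables $\lambda$ and $n_2\approx N$ variables $\mu$, so there is no fixed-dimensional critical point to expand around. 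The paper first applies the Mellin--Barnes/Fredholm identity of Theorem \ref{thm:BCR} twice --- once to the $\mu$-integral, then (after the formal interchange \eqref{eq:formal_Fub}) to the $\lambda$-integral --- to obtain the double series of Proposition \ref{prop:joint}, in which each term $I_{m,n}^{(N)}$ involves only $2(m+n)$ contour variables; only then is steepest descent performed, term by term. In particular the functions $G$ and $F$ of \eqref{FG} arise directly from the exponents $m_2=N+t_2N^{2/3}$, $n_2=N-t_2N^{2/3}$ (and similarly $n_1,m_1$) on the ratios $\Gamma(\gamma-\cdot)$, $\Gamma(\cdot)$ in the BCR kernels; no reflection identity is involved, and the $\pi/\sin$ factors come from the Mellin--Barnes kernel itself. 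Second, your sketch omits the one genuinely structural ingredient needed to recognise the \emph{two-point} (as opposed to one-point) Airy law: the block Cauchy determinant identity of Lemma \ref{lem:blockCauchy}, which merges the two separate Cauchy determinants and the coupling factors into a single $(n+m)\times(n+m)$ block determinant whose off-diagonal blocks, after rescaling, become the $t\neq t'$ entries of the extended Airy kernel in \eqref{blockFred}. Finally, your diagnosis of the obstruction is essentially the paper's, though the precise failure is not a tail issue along the contours but a counting one: the surviving cross term $\sfC\sfT_{m,n}$ in \eqref{CT} admits only a bound of order $C^{nm}$, which the factor $1/(n!\,m!)$ cannot absorb, so the term-by-term limits cannot be summed uniformly --- which is exactly why the statement remains a conjecture.
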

Even if formal, our check of the validity of the conjecture
makes some non trivial steps. 
In particular, we re-express formula \eqref{2p_formula} as finite series related to the Fredholm expansions of the Laplace transform of a single partition function
 and then we reveal a structure of block determinants that relate directly to the Fredholm expansion
  of the two-point distribution for the Airy process. We can then pass to the limit on each individual term in this expansion and establish that it
  converges to the corresponding term in the Fredholm expansion for the Airy process, cf. Proposition \ref{prop:dom_conv}.
   However, what makes our computation formal is that we were not able to justify that this limiting procedure can be done uniformly for all terms in the expansion.
   The difficulty arises from the presence of a {\em cross term} \eqref{CT} involving gamma functions. A naive estimate of this cross term
   with a uniform, constant bound on its modulus, when the values of the arguments are in a compact set, produces a bound of the form $C^{nm}$, which cannot 
   be compensated by the factorial $1/n! m!$ in the expansion to truncate the series.
    Besides such an extension of Proposition \ref{prop:dom_conv} to hold uniformly to all orders in the series and (for the same reason) a formal interchange of integration 
   in relation \eqref{eq:formal_Fub} (which, however, can be made rigorous in the case of the O'Connell-Yor polymer), the rest of our computations are rigorous.
  
\subsection{Auxiliary results}\label{auxres}
The Airy process $\Ai(\cdot)$ is a stationary, continuous process with finite dimensional  distributions given by
\begin{align}\label{def:airy}
\P\big(\Ai(t_1)\leq \xi_1,...,\Ai(t_k)\leq \xi_k\big) =\det(I-\sff\,\sfA\sfi \,\sff)_{L^2(\{t_1,...,t_k\}\times \bbR)},
\end{align}
where for $\ell=1,...,k$ we define $\sff(t_\ell,x):=\ind_{\{x\geq \xi_\ell\}}$ and $\sfA\sfi$ is the extended Airy kernel defined by
\begin{align*}
\sfA\sfi(t,\xi;t', \xi'):=
\begin{cases}
\,\,\int_0^\infty e^{-\gl (t-t')} Ai(\xi+\gl) Ai(\xi'+\gl)  \dd \gl, &\qquad \text{if} \,\,t\geq t',\\
&\\
-\int_{-\infty}^0 e^{-\gl (t-t')} Ai(\xi+\gl) Ai(\xi'+\gl) \dd \gl, &\qquad \text{if} \,\,t< t',
\end{cases}
\end{align*}
In the above integral $Ai(\cdot)$ is the Airy function, which has the integral representation 
\begin{align}\label{def:Airy}
Ai(x)=\frac{1}{2\pi \iota} \int_{C_{\frac{\pi}{3}}} e^{\frac{z^3}{3}-xz} \dd z
\end{align}
The contour of integration consists of two straight rays, one starting at infinity and ending at the origin in an angle $-\pi/3$ and the second departing from the origin at an angle $\pi/3$ and going to infinity. Let us mention that the Airy process can be viewed as the top line of an ensemble of non intersecting paths, which is known as the {\it Airy line ensemble}, whose statistics are also governed by kernel $\sfA\sfi$.

The determinant in \eqref{def:airy} is a Fredholm determinant on $L^2(\{t_1,...,t_k\}\times \bbR)$. Let us recall that for a trace-class operator $K$ acting on 
$L^2(\mathcal{X},\mu)$ as $Kf(x)=\int_\mathcal{X} K(x,y) f(y) \,\dd\mu(y)$, for some measure space $(\mathcal{X},\mu)$, the Fredholm determinant is defined as the
series 
\begin{align*}
\det(I+K)_{L^2(\mathcal{X})}:=1+\sum_{n=1}^\infty \frac{1}{n!} \int_{\mathcal{X}^n} \det\big( K(x_i,x_j)\big)_{n\times n} \,\dd \mu(x_1)\cdots \dd \mu(x_n) .
\end{align*}
In particular, the Fredholm determinant in \eqref{def:airy} can be written as
\begin{align}\label{eq:airy_exp}
&\det(I-\sff\,\sfA\sfi \,\sff)_{L^2(\{t_1,...,t_k\}\times \bbR)} \notag\\
&= 1+ \sum_{n=1}^\infty \frac{(-1)^n}{n!} \sum_{s_1,...,s_n\in\{t_1,...,t_k\}} \int_{-\infty}^\infty\cdots \int_{-\infty}^\infty \det\big(  ( \sff \,\sfA\sfi \,\sff) (s_i,y_i;s_j,y_j)  \big)_{n\times n}
 \dd y_1\cdots\dd y_n
\end{align}

It will be useful to rewrite the above formula in terms of a block determinant. We do this as follows: 
We divide the variables $s_1,...,s_n$ into $k$ groups according to which value out of $\{t_1,...,t_k\}$ they take. The number of ways we can do this 
assignment so that for $j=1,...,k$ we have 
$n_j$ variables taking the value $t_j$ equals 
\[
{n \choose n_1 \cdots n_k}=\frac{n!}{n_1!\cdots n_k!}.
\]
Taking this into account and the fact that $n=n_1+\cdots+n_k$, we can rewrite \eqref{eq:airy_exp} as
\begin{align}\label{blockFred}
&\det(I-\sff\,\sfA\sfi \,\sff)_{L^2(\{t_1,...,t_k\}\times \bbR)} 
= 1+ \sum_{n_1,...,n_k=1}^\infty \frac{(-1)^{n_1+\cdots+n_k}}{n_1!\cdots n_k!} \\
&\qquad  \int_{-\infty}^\infty\cdots \int_{-\infty}^\infty 
\det\Bigg(  \Big\{(\sff\,\sfA\sfi \,\sff)(t_r,y_{i_r};t_{s},y_{j_s})\Big\}_{\substack{i_r=\mathfrak{n}_{r-1}+1,...,\mathfrak{n}_r \\ j_s=\mathfrak{n}_{s-1}+1,...,\mathfrak{n}_s}}  \Bigg)_{r,s=1,...,k}
 \prod_{j=1}^{n_1+\cdots+n_k}\dd y_j  \notag,
\end{align}
where $\mathfrak{n}_r=\sum_{j=1}^{r-1}n_{j}$ and the determinant that appears above is a block determinant.
\vskip 2mm
  Let us record the following Lemma, which is a generalisation of Lemma 4.1.39 in \cite{BC14}.
  The latter has formalised an idea, which has appeared in the physics literature earlier, e.g. \cite{CLeDR10}, 
  and reduces the study of the distributional limit of random variables to the asymptotics of the corresponding Laplace transform. 
  The proof of Lemma \ref{lem:borodin} is a straightforward generalisation of that in \cite{BC14} and so we omit the proof.
  \begin{lemma}\label{lem:borodin}
  	Consider a sequence of functions $\{f_n\}_{n\geq 1}$  mapping $\mathbb{R}\to [0,1]$ such that
  	for each $n$, $f_n(x)$ is strictly decreasing in $x$, tends to one as $x$ tends to $-\infty$ and zero as $x$ tends to $\infty$
  	and for each $\delta>0$ converges uniformly to $1_{\{x\leq 0\}}$ in $\mathbb{R}\backslash[-\delta,\delta]$. 
	Define also $f_n^r(x)=f_n(x-r)$. Consider two sequences of random variable $(X_n)$ and $(Y_n)$ such that for each $r_1,r_2\in \R$, 
  	\begin{equation*}
  		\E[f_n^{r_1}(X_n)f_n^{r_2}(Y_n)]\to p(r_1,r_2),
  	\end{equation*}
  	and assume that $p(r_1,r_2)$ is a continuous joint distribution function of two random variables. Then $(X_n,Y_n)$ converges weakly to a random vector $(X,Y)$ which is distributed according to $ \P(X<r_1,Y<r_2)=p(r_1,r_2)$. 
  \end{lemma}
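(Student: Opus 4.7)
The plan is to adapt the standard indicator-sandwiching argument from Lemma 4.1.39 of \cite{BC14} to the two-variable setting. The key starting observation is that because $f_n$ takes values in $[0,1]$, is monotone decreasing, converges to $1$ at $-\infty$ and $0$ at $+\infty$, and converges uniformly to $\mathbb{1}_{\{\cdot\leq 0\}}$ outside $[-\delta,\delta]$, the translation-invariance of the family $\{f_n^r\}$ yields the following two-sided bracket: for every $\delta,\epsilon>0$, every $r\in\R$, and every $n$ sufficiently large (with a threshold that depends on $\delta,\epsilon$ but \emph{not} on $r$),
\begin{align*}
(1-\epsilon)\,\mathbb{1}_{\{x\leq r\}}\leq f_n^{r+\delta}(x) \leq \mathbb{1}_{\{x\leq r+2\delta\}}+\epsilon,
\end{align*}
with the analogous inequality also holding when $r+\delta$ is replaced by $r-\delta$. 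The fact that the approximation rate is uniform in $r$ is the crucial point that lifts the one-dimensional argument to the joint setting.

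Next, I would take products of these inequalities applied at $(X_n,r_1)$ and $(Y_n,r_2)$ to obtain, for $n$ sufficiently large,
\begin{align*}
(1-\epsilon)^2\,\mathbb{1}_{\{X_n\leq r_1,\,Y_n\leq r_2\}} &\leq f_n^{r_1+\delta}(X_n)\,f_n^{r_2+\delta}(Y_n), \\
f_n^{r_1-\delta}(X_n)\,f_n^{r_2-\delta}(Y_n) &\leq \mathbb{1}_{\{X_n\leq r_1,\,Y_n\leq r_2\}}+2\epsilon+\epsilon^2.
\end{align*}
Taking expectations, invoking the hypothesis $\E[f_n^{r_1\pm\delta}(X_n)f_n^{r_2\pm\delta}(Y_n)]\to p(r_1\pm\delta,r_2\pm\delta)$, and passing to $\limsup$ and $\liminf$ in $n$ yields
\begin{align*}
p(r_1-\delta,r_2-\delta)-2\epsilon-\epsilon^2 &\leq \liminf_{n} \P(X_n\leq r_1,Y_n\leq r_2) \\
&\leq \limsup_{n} \P(X_n\leq r_1,Y_n\leq r_2) \leq \frac{p(r_1+\delta,r_2+\delta)}{(1-\epsilon)^2}.
\end{align*}

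Finally, sending $\epsilon\to 0$ and then $\delta\to 0$, and using continuity of $p$, gives pointwise convergence $\P(X_n\leq r_1,Y_n\leq r_2)\to p(r_1,r_2)$ on all of $\R^2$. Since $p$ is assumed to be a continuous joint distribution function, pointwise convergence of the bivariate CDFs at every point implies weak convergence of $(X_n,Y_n)$ to the random vector $(X,Y)$ with $\P(X\leq r_1,Y\leq r_2)=p(r_1,r_2)$, which is the stated conclusion. The (minor) main obstacle is verifying that the approximation rate of $f_n^r$ to $\mathbb{1}_{\{\cdot\leq r\}}$ is uniform in the shift parameter $r$; this follows from $f_n^r$ being an exact translate of the single function $f_n$, so the uniform-on-$\R\setminus[-\delta,\delta]$ hypothesis transfers directly. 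The rest is a mechanical repackaging of the one-dimensional Borodin--Corwin argument.
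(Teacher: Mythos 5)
Your proof is correct and is precisely the ``straightforward generalisation'' of Lemma 4.1.39 of \cite{BC14} that the paper invokes while omitting the details: the translation-uniform two-sided bracketing of $f_n^r$ by shifted indicators, the product inequalities, the $\limsup$/$\liminf$ squeeze using the hypothesis at $(r_1\pm\delta,r_2\pm\delta)$, and the continuity of $p$ all check out, and pointwise convergence of the bivariate distribution functions to a continuous limit does imply weak convergence. Your emphasis on the uniformity in the shift $r$ (automatic since $f_n^r$ is an exact translate of $f_n$) is exactly the point that makes the one-dimensional argument lift to the joint setting, so nothing is missing.
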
   
  We will apply this lemma by considering the functions $f_N(x)= e^{-e^{N^{1/3}x}}$, which satisfies the criteria,
  and choosing the parameters of the joint Laplace transform as
  \begin{align}\label{u1u2}
  u_1= e^{-Nf_{\gamma}-r_1N^{1/3}} \quad \text{and} \quad
  u_2= e^{-Nf_{\gamma}-r_2N^{1/3}}
  \end{align}
  where $f_{\gamma}=-2\Psi(\gamma/2)$ and $\Psi(z)=[\log\Gamma]'(z)$ is the digamma function.
  For $i=1,2$, we have
  \begin{equation*}
  	e^{-u_i Z_{(m_i,n_i)}}=f_N^{r_i}\Bigg(\frac{\log Z_{(m_i,n_i)}-Nf_{\gamma}}{N^{1/3}}\Bigg)
  \end{equation*}
  By Lemma \ref{lem:borodin}, for each pair $(r_1,r_2)\in \R^2$ it suffices to compute the limit
  \begin{equation*}
  	\lim_{N\to \infty}\bbE\Bigg[f_N^{r_1}\Bigg(\frac{\log Z_{(m_1,n_1)}-Nf_{\gamma}}{N^{1/3}}\Bigg)f_N^{r_2}\Bigg(\frac{\log Z_{(m_2,n_2)}-Nf_{\gamma}}{N^{1/3}}\Bigg)\Bigg]=p_{\gamma}(r_1,r_2)
  \end{equation*}
  and show that this coincides with the joint distribution of the Airy process at the corresponding points. In other words we will need to compute the limit of the Laplace transform $\bbE\Big[e^{-u_1 Z_{(m_1,n_1)}-u_2Z_{(m_2,n_2)}}\Big]$ with the parameters set as above.
\vskip 2mm
We will make use of the following theorem due to Borodin-Corwin-Remenik \cite{BCR13}. We will also use the notation:
$\pm\ell_\delta$ is a vertical line in the complex plane with real part equal to $\pm\delta$, $\ell_{\delta_1,\delta_2,M}$ is a horizontal line segment from $-\delta_1+\iota M$
to $\delta_2+\iota M$ and $C_\delta$ is a circle centred at zero with radius $\delta$. 
\begin{theorem}[\cite{BCR13}]\label{thm:BCR}
Fix $0<\delta_2<1, 0<\delta_1<\min\{\delta_2,1-\delta_2\}$ and $a_1,..,a_N$ such that $|a_i|<\delta_1$  and let $F$ be a meromorphic function, which is non-zero along and inside $C_{\delta_1}$ and all of its poles have real part strictly larger than $\delta_2$. Moreover, assume that for all $\kappa>0$
\begin{align*}
\int_{\pm\ell_{\delta_2}} \dd w \,e^{\pi(\frac{N}{2}-1) | \Im (w)|}  | \Im (w)|^{\kappa} |F(w)| <\infty, \quad \int_{\ell'_{\delta_1,\delta_2,M}} \dd w \,e^{\pi(\frac{N}{2}-1) | \Im (w)|}  | \Im (w)|^{\kappa} |F(w)| \xrightarrow[|M|\to\infty]{} 0.
\end{align*}
Then
\begin{align*}
 \int_{(-\ell_{\delta_1})^N} \dd w \, s_N(w) \prod_{j,j'=1}^N \Gamma(a_{j'}-w_j) \prod_{j=1}^N \frac{F(w_j)}{F(a_j)}
=\det (I+K)_{L^2(C_{\delta_1})}.
\end{align*}
The kernel $K:L^2(C_{\delta_1})\to L^2(C_{\delta_1})$ in the above Fredholm determinant is given by
\begin{align*}
K(v,v')=\frac{1}{2\pi \iota} \int_{\ell_{\delta_2}} \frac{dw}{w-v'}\frac{\pi}{\sin(\pi(v-w))} \frac{F(w)}{F(v)} 
\prod_{j=1}^N \frac{\Gamma(v-a_j)}{\Gamma(w-a_j)}.
\end{align*}
In particular, recalling Theorem \ref{thm:one_point_Laplace}, we have that the Laplace transform of an $(\ga,\hat\ga)-$log-gamma polymer partition function is expressed as
 \begin{align}\label{BCR_Fredholm}
  \bbE\Big[e^{-u Z_{(m,n)} }\Big]=\det(I+K_u)_{L^2(C_{\delta_1})},
 \end{align}
   where the kernel $K_u:L^2(C_{\delta_1})\to L^2(C_{\delta_1})$ equals 
 \begin{align*}
K_u(v,v')=\frac{1}{2\pi \iota} \int_{\ell_{\delta_2}} \frac{dw}{w-v'}\frac{\pi}{\sin(\pi(v-w))} 
\frac{ u^{w}\, \sfF^{\,\ga}_{m}(-w)}{u^{v}\,\sfF^{\,\ga}_{m}(-v)} 
\prod_{j=1}^n \frac{\Gamma(v+\hat\ga_j)}{\Gamma(w+\hat\ga_j)},
\end{align*}
where notation $\sfF_m^\ga(\cdot)$ was introduced in Definition \ref{def:F}.
\end{theorem}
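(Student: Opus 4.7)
The plan is to follow the derivation of \cite{BCR13}, which converts the $N$-fold Mellin--Barnes integral on the left into the Fredholm determinant on the right through a combination of continuous Cauchy--Binet (Andr\'eief) and a residue computation along deformed contours. The statement is formulated at exactly the level of generality needed so that the hypotheses on $F$ along $\pm\ell_{\delta_2}$ and along the horizontal segments $\ell'_{\delta_1,\delta_2,M}$ justify every contour manipulation.

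First, I would rewrite the factor $s_N(w)\prod_{j,j'}\Gamma(a_{j'}-w_j)$ in determinantal form. Using the Sklyanin density and the reflection identity $\pi/\sin(\pi z)=\Gamma(z)\Gamma(1-z)$, one extracts a Cauchy-type factor $\det\bigl[1/(w_j-a_{j'})\bigr]$ (modulo explicit Gamma and sine products depending only on individual variables), so that the integrand becomes a ratio with determinantal numerator. The continuous Cauchy--Binet identity then converts the $N$-fold integral on $(-\ell_{\delta_1})^N$ into a sum of products of $N\times N$ determinants whose entries are single contour integrals.

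Second, I would introduce the circle $C_{\delta_1}$ by residue calculus. Deforming the $N$ contours $-\ell_{\delta_1}\to\ell_{\delta_2}$, the poles of $\Gamma(a_{j'}-w_j)$ at $w_j=a_{j'}$ lying between the two vertical lines are crossed; the decay assumptions on $F$ on $\pm\ell_{\delta_2}$ combined with the gamma-function asymptotics \eqref{gamma_asympto} dominate $\pi/\sin(\pi(\cdot))$ and guarantee absolute convergence, while the vanishing along $\ell'_{\delta_1,\delta_2,M}$ as $|M|\to\infty$ kills the side contributions. Organising the residue sum by the subset of indices $\sigma:[n]\hookrightarrow[N]$ for which $w_j$ was collapsed onto some $a_{\sigma(j)}$ produces, for each $n\leq N$, an $n$-fold residue sum over $\{a_1,\dots,a_N\}$ together with $n$ remaining integrals over $\ell_{\delta_2}$.

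Third, I would identify this sum with the Fredholm expansion of $\det(I+K)_{L^2(C_{\delta_1})}$. Writing $\oint_{C_{\delta_1}} f(v)\,dv/(2\pi\iota)=\sum_{\ell}\mathrm{Res}_{v=a_\ell}f(v)$ for integrands meromorphic inside $C_{\delta_1}$, the combinatorial expression produced in the previous step is exactly
\[
\sum_{n\geq 0}\frac{1}{n!}\oint_{C_{\delta_1}^n}\det\bigl[K(v_i,v_j)\bigr]_{i,j=1}^n\prod_{i=1}^n\frac{dv_i}{2\pi\iota},
\]
with the kernel $K$ as stated; this uses Andr\'eief once more, in the reverse direction, to recombine the determinant of Cauchy factors $1/(w_k-v_j)$ and the separate $v$- and $w$-products into the single integral representation of $K$. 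The main obstacle is the careful bookkeeping in the residue step: one must verify that the Vandermonde/Cauchy factors emerging from Andr\'eief cancel exactly against the residues of $\prod_\ell\Gamma(v-a_\ell)$ to regenerate the Sklyanin measure and the $\prod_{j,j'}\Gamma(a_{j'}-w_j)$ product on the left, and simultaneously that the decay hypotheses on $F$ control all interchanges of summation and integration uniformly in the truncation order $n$. Once this matching is in place, comparing the two expressions term by term (which is legitimate by the absolute summability provided by the $F$-hypotheses) gives the claimed identity; the specialisation to the log-gamma Laplace transform is then immediate from Theorem \ref{thm:one_point_Laplace}, with $F(w)=u^{-w}\sfF_m^{\,\ga}(-w)^{-1}\prod_j\Gamma(w+\hat\ga_j)^{-1}$ playing the role of the meromorphic weight.
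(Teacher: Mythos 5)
This theorem is not proved in the paper at all: it is imported verbatim from \cite{BCR13}, and the only portion the authors derive themselves is the specialization \eqref{BCR_Fredholm}, which is a direct substitution into the general identity using Theorem \ref{thm:one_point_Laplace}. So there is no internal argument to compare yours against; what follows measures your sketch against the actual proof in \cite{BCR13}, whose strategy (Cauchy determinant via the reflection formula, deformation of $-\ell_{\delta_1}$ to $\ell_{\delta_2}$ crossing only the simple poles at $w_j=a_{j'}$, and re-identification of the residue expansion with the Fredholm series by evaluating the $v$-integrals over $C_{\delta_1}$ as residues of $\prod_j\Gamma(v-a_j)$) your outline does capture.

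Two points in your sketch do not survive scrutiny. First, the residue bookkeeping is internally inconsistent: if $n$ of the $N$ variables are collapsed onto residues $w_j=a_{\sigma(j)}$, then $N-n$ integrals remain on $\ell_{\delta_2}$, not $n$. The correct matching pairs the left-hand contribution whose residues sit at a subset $T\subseteq\{a_1,\dots,a_N\}$ with the term of order $N-|T|$ in the Fredholm expansion, whose circle residues sit at the \emph{complementary} subset (check the endpoints: all $N$ variables collapsed gives the constant term $1$, none collapsed gives the order-$N$ term, consistent with the finite-rank observation in Remark \ref{remark:BCR}). Verifying that the $\Gamma(a_i-a_j)$ factors produced on the two sides cancel under this complementary pairing is precisely the combinatorial lemma that carries the proof in \cite{BCR13}; your plan defers it to ``careful bookkeeping'' while setting up the count in a way that cannot close. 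Second, your weight in the log-gamma specialization, $F(w)=u^{-w}\sfF_m^{\,\ga}(-w)^{-1}\prod_j\Gamma(w+\hat\ga_j)^{-1}$, is wrong: this function is entire with zeros at $w=-\hat\ga_j$, which lie inside $C_{\delta_1}$ since $|a_j|<\delta_1$, so it violates the hypothesis that $F$ be non-zero there, and it does not reproduce the integrand of Theorem \ref{thm:one_point_Laplace}. The correct choice is $F(w)=u^{w}\,\sfF^{\,\ga}_{m}(-w)$ together with $a_j=-\hat\ga_j$; the factors $\Gamma(w+\hat\ga_j)$ arise from the product $\prod_{j,j'}\Gamma(a_{j'}-w_j)$ in the general formula, not from $F$.
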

\begin{remark}\label{remark:BCR}\rm
Even though, in general, a Fredholm determinant is given by an infinite series, an inspection of the proof of the above theorem \cite{BCR13} shows that 
  \eqref{BCR_Fredholm} is finite rank and the terms beyond order $N$ in the series are zero. 
\end{remark}
\subsection{Road map}\label{sec:road}
Let us describe the steps we make towards Claim 1. First, in Proposition \eqref{prop:joint} we use Theorem \ref{thm:BCR} in an iterative manner,
 in order to rewrite formula \eqref{2p_formula} as a double, finite series of terms related to expansions of Fredholm determinants like \eqref{BCR_Fredholm}
 (denote by $I_{m,n}^{(N)}$ the generic term in these double series ).
 We note that the proof of this proposition is formal, as it contains a formal interchange of integrals in \eqref{eq:formal_Fub}. On the other hand we also note that
 this formal interchange of integrals can become rigorous, in the case of the O'Connell-Yor polymer, cf. Remark \ref{remark:OY_2}.
  However, for the sake of exposition and uniformity of the text, we have
 preferred to present the calculations in the case of the log-gamma polymer, even if these contain a formal step.
 
  Then in Lemma \ref{lem:blockCauchy} we make an observation that certain block determinants arise in the above mentioned expansion, which point towards the expansion \eqref{blockFred} of the Airy process.
  
  In Proposition \ref{lem:joint_conv} we further rewrite our formula, so that it is in a form more suitable for asymptotics. The asymptotics are then carried out with the
   use of steepest descent method. For this we need to make suitable contour deformations and also estimate our integrands along the deformed contours, cf Lemma 
   \ref{lem:BCR} and \eqref{eq:Fub_blue}. 
   
   The last step we perform is to show that using the steepest descent method, any fixed term in our double expansion, denoted by $I_{m,n}^{(N)}$, converges to  
   \begin{align}\label{2point_road}
   & \frac{(-1)^{n+m}}{n! \, m! (2\pi\iota)^{m+n}}
   \int_{c_1^{\gamma}r_2+c_2^{\gamma}t_2^2}^\infty\cdots \int_{c_1^{\gamma}r_2+c_2^{\gamma}t_2^2}^\infty  \,\, \prod_{j=1}^{n}\dd y_j 
    \,\,  \int_{c_1^{\gamma}r_1+c_2^{\gamma}t_1^2}^\infty  \cdots \int_{c_1^{\gamma}r_1+c_2^{\gamma}t_1^2}^\infty  \,\, \prod_{j=n+1}^{n+m}\dd y_j  \\
    &\notag \\
 &\hskip 1cm\det \left(
  	\begin{array}{cc}
  		\Big(  \sfA\sfi \big( c_3^\gamma t_2,y_{i} \,; \, c_3^\gamma t_{2},y_{j}\big)  \Big)_{n\times n} &
		 \Big( \sfA\sfi \big( c_3^\gamma t_2,y_{i} \,; \,- c_3^\gamma t_{1},y_{j+n}\big) \Big)_{n\times m}\\
		&\\
  		\Big(\sfA\sfi \big(- c_3^\gamma t_1,y_{i+n} \,; \, c_3^\gamma t_{2},y_{j}\big) \Big)_{m\times n} &
		 \Big( \sfA\sfi \big(- c_3^\gamma t_1,y_{i+n} \,;\, - c_3^\gamma t_{1},y_{j+n}\big) \Big)_{m\times m} 
  	\end{array}
  	\right) \notag
   \end{align}
   and when this is summed over all possible $n,m\geq 0$ it coincides with the probability
 \begin{align*}
 \P\Big( \,\Ai(-c_3^\gamma t_1)\leq  c_1^{\gamma}r_1+c_2^{\gamma}t_1^2 \, \, ,\Ai(c_3^\gamma t_2)\leq c_1^{\gamma}r_2+c_2^{\gamma}t_2^2 \, \Big).
 \end{align*}
 Then the use of Lemma \ref{lem:borodin} would confirm Claim 1, assuming one could obtain a good control on the tails of the series involving the terms $I_{m,n}^{(N)}$.
 \subsection{Main computations}
We now follow the roadmap and start by rewriting the contour integral formula provided by Theorem \ref{thm:2-point} for the two point function. 
\begin{proposition}{\rm [}modulo justification of formula \eqref{eq:formal_Fub}{\rm]} \label{prop:joint}
Let $Z_{(m_1,n_1)}, Z_{(m_2,n_2)}$ be the partition functions from $(1,1)$ to $(m_1,n_1)$ and $(m_2,n_2)$, respectively, of a $(0,\gamma)$-log-gamma polymer. 
Fix $0<\gd<\gamma/2$ and $0<\gd_1<\min\{\gd,1-\gd\}$.
We assume that $m_1\leq n_1, m_2\geq n_2, m_2>m_1$ and $n_2<n_1$. Then the joint Laplace transform is written in the form
\begin{align*}
 \bbE\Big[e^{-u_1 Z_{(m_1,n_1)}-u_2Z_{(m_2,n_2)}}\Big] =
 &\sum_{n=0}^{n_2}\sum_{m=0}^{m_1}  \frac{1}{n! m!(2\pi\iota)^{m+n}} 
  \int \dd {\bf v} \dd {\bf w} 
  \int \dd {\sfv} \dd {\sfw}
      \,\,\det\Big(\frac{1}{w_k-v_\ell}\Big)_{n\times n} \det\Big(\frac{1}{\sfw_k-\sfv_\ell}\Big)_{m\times m} \\
  &\quad   \times\,\,    \prod_{k=1}^n \frac{\pi}{\sin(\pi(v_k-w_k))} 
  \frac{ u_2^{w_k}}{u_2^{v_k}} \left(\frac{\Gamma(\gamma-w_k)}{\Gamma(\gamma-v_k)}\right)^{m_2}
 \left(\frac{\Gamma(v_k)}{\Gamma(w_k)}\right)^{n_2}\\
 &\quad   \times\,\,    \prod_{\ell=1}^m \frac{\pi}{\sin(\pi(\sfv_\ell-\sfw_\ell))} 
  \frac{ u_1^{\sfw_\ell}}{u_1^{\sfv_\ell}} \left(\frac{\Gamma(\gamma-\sfw_\ell)}{\Gamma(\gamma-\sfv_\ell)}\right)^{n_1}
 \left(\frac{\Gamma(\sfv_\ell)}{\Gamma(\sfw_\ell)}\right)^{m_1} \\
    &\quad   \times\,\,   \prod_{k=1}^n \prod_{\ell=1}^m 
    \frac{\Gamma(\gamma-\sfw_\ell-w_k)\Gamma(\gamma-\sfv_\ell-v_k)}{\Gamma(\gamma-\sfw_\ell-v_k\,) \,\Gamma(\gamma-\sfv_\ell-w_k)}
\end{align*}
where the first set of contour integrals above are over $(C_{\gd_1}\times \ell_{\gd})^{n}$ and the second over $(C_{\gd_1}\times \ell_{\gd})^{m}$.
\end{proposition}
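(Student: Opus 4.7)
The plan is to apply Theorem \ref{thm:BCR} iteratively to the two Whittaker-type integrals appearing in formula \eqref{2p_formula} of Theorem \ref{thm:2-point}, specialised to $\ga_i=0$ and $\hat\ga_j=\gamma$. Each application converts a Sklyanin-measure integral into a Fredholm determinant on $L^2(C_{\gd_1})$, which I then expand termwise to produce the double sum appearing in the statement. A key bookkeeping identity used throughout is that, for any kernel of the form $K(v,v')=\frac{1}{2\pi\iota}\int_{\ell_{\gd_2}}\frac{dw}{w-v'}\,H(v,w)$, multilinearity of the determinant and relabelling over permutations give
\begin{equation*}
\det\bigl(K(v_i,v_j)\bigr)_{n\times n}=\frac{1}{(2\pi\iota)^{n}}\int\prod_{i=1}^{n}dw_i\,H(v_i,w_i)\,\det\!\left(\frac{1}{w_i-v_j}\right)_{\!n\times n},
\end{equation*}
producing exactly the Cauchy determinants $\det(1/(w_k-v_\ell))$ and $\det(1/(\sfw_k-\sfv_\ell))$ that appear in the proposition.

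First I would apply Theorem \ref{thm:BCR} to the $\gl$-integral, treating $\mu$ as a parameter. After the change of variable $w_i=-\gl_i$, the matching is $N=m_1$, $a_i=0$, and
\begin{equation*}
F^{(\gl)}(w)=u_1^{w}\,\Gamma(\gamma-w)^{n_1-n_2}\prod_{k=1}^{n_2}\Gamma(\mu_k-w),
\end{equation*}
which reproduces the $\gl$-dependent part of the integrand up to an explicit $\mu$-dependent constant $(\prod_k\Gamma(\mu_k)/\Gamma(\gamma)^{n_2})^{m_1}$. Since the resulting Fredholm determinant is finite rank by Remark \ref{remark:BCR}, the series truncates at $m=m_1$, and the identity above rewrites each term as a Cauchy determinant against $\prod_\ell H^{(\gl)}(\sfv_\ell,\sfw_\ell)$ with
\begin{equation*}
H^{(\gl)}(\sfv,\sfw)=\frac{\pi}{\sin(\pi(\sfv-\sfw))}\frac{F^{(\gl)}(\sfw)}{F^{(\gl)}(\sfv)}\,\frac{\Gamma(\sfv)^{m_1}}{\Gamma(\sfw)^{m_1}}.
\end{equation*}

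Next I would substitute this expansion back into \eqref{2p_formula} and, exchanging the order of integration formally, gather the $\mu$-dependence into a single outer $\mu$-integral parametrised by $(\sfv,\sfw)$. After the shift $\mu_k=\tilde\mu_k+\gamma$ and the change $w_k=-\tilde\mu_k$, all the $\mu$-dependence resides in a per-$w_k$ factor of the form $F^{(\mu)}(w_k)$ with
\begin{equation*}
F^{(\mu)}(w)=u_2^{w}\,\Gamma(\gamma-w)^{m_2}\prod_{\ell=1}^{m}\frac{\Gamma(\gamma-w-\sfw_\ell)}{\Gamma(\gamma-w-\sfv_\ell)},
\end{equation*}
where the last product precisely absorbs the ratios $\Gamma(\mu_k-\sfw_\ell)/\Gamma(\mu_k-\sfv_\ell)$ produced by the $\gl$-Fredholm expansion. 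A second application of Theorem \ref{thm:BCR} (with $N=n_2$, $a_k=0$) and of the Cauchy identity yields the second Cauchy determinant together with the product over $k$ of the stated per-diagonal factor, while the ratio $F^{(\mu)}(w_k)/F^{(\mu)}(v_k)$ generates precisely the four-fold Gamma cross term
\begin{equation*}
\prod_{k=1}^{n}\prod_{\ell=1}^{m}\frac{\Gamma(\gamma-\sfw_\ell-w_k)\,\Gamma(\gamma-\sfv_\ell-v_k)}{\Gamma(\gamma-\sfw_\ell-v_k)\,\Gamma(\gamma-\sfv_\ell-w_k)}.
\end{equation*}
The normalisation constant $F^{(\mu)}(0)^{n_2}=\Gamma(\gamma)^{m_2 n_2}\prod_\ell(\Gamma(\gamma-\sfw_\ell)/\Gamma(\gamma-\sfv_\ell))^{n_2}$ introduced by BCR multiplies the $\gl$-side per-$\ell$ factor and promotes the exponent on $\Gamma(\gamma-\sfw_\ell)/\Gamma(\gamma-\sfv_\ell)$ from $n_1-n_2$ to $n_1$, matching the statement; a final bookkeeping of constants of the form $\Gamma(\gamma)^{\pm\,\cdot}$, together with a deformation of the inner $w$-contours from $\ell_{\gd_2}$ to $\ell_\gd$, completes the identification.

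The main obstacle is the interchange of the outer $\mu$-integration with the $(\sfv,\sfw)$-integrations produced by the Fredholm expansion of the $\gl$-integral: this is precisely the formal step the authors flag in connection with \eqref{eq:formal_Fub}. The estimates developed in Lemma \ref{lem:L2int} control the $\gl$-integral at fixed $\mu$, but iterating them to absolute integrability across both integrations would require uniform (in the expansion indices $m,n$) control of the four-fold Gamma cross product above, which, as noted in Section \ref{sec:road}, behaves like $C^{mn}$ on compacta and cannot be dominated by the factorials $1/(m!\,n!)$. Apart from this single interchange, the verification of BCR's hypotheses along the required contours follows from the Gamma-function asymptotics \eqref{gamma_asympto} in the same way as in Lemma \ref{lem:L2int}.
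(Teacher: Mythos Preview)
Your proposal is correct and follows essentially the same strategy as the paper: two successive applications of Theorem~\ref{thm:BCR} to the nested Sklyanin integrals in \eqref{2p_formula}, with the Fredholm determinants expanded via the Cauchy-determinant identity, and one formal interchange of integration in between. The only difference is that you apply BCR to the $\lambda$-integral first (producing the $(\sfv,\sfw)$ expansion) and then to the $\mu$-integral, whereas the paper does the opposite order; by the symmetric structure of \eqref{2p_formula} this is immaterial, the formal swap \eqref{eq:formal_Fub} appears in either ordering, and your bookkeeping of the normalisation constants (in particular the promotion of the exponent from $n_1-n_2$ to $n_1$) is correct.
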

\begin{remark}\label{remark:OY_2}\rm	
        The two-point function of the O'Connell-Yor polymer in Remark \ref{rem:OY} can also be written in terms a Fredholm-like expansion:
	\begin{align*}
	\bbE\Big[e^{-u_1Z^{OY}(m_1,t_1)-u_2Z^{OY}(m_2,t_2)}\Big] \notag =
	&\sum_{n=0}^{m_1}\sum_{m=0}^{m_2}  \frac{1}{n! m!(2\pi\iota)^{m+n}} 
	\int \dd {\bf v} \dd {\bf w} 
	\int \dd {\sfv} \dd {\sfw}
	\,\,\det\Big(\frac{1}{w_k-v_\ell}\Big)_{n\times n} \det\Big(\frac{1}{\sfw_k-\sfv_\ell}\Big)_{m\times m} \\
	&\quad   \times\,\,    \prod_{k=1}^n \frac{\pi}{\sin(\pi(v_k-w_k))} 
	\frac{ (u_1/u_2)^{w_k}}{(u_1/u_2)^{v_k}} \left(\frac{\Gamma(w_k)}{\Gamma(v_k)}\right)^{m_2-m_1} \exp\Big(\frac{t_1-t_2}{2}(w_k^2-v_k^2)\Big)
	\\
	&\quad   \times\,\,    \prod_{\ell=1}^m \frac{\pi}{\sin(\pi(\sfv_\ell-\sfw_\ell))} 
	\frac{ u_2^{\sfw_\ell}}{u_2^{\sfv_\ell}} 
	\left(\frac{\Gamma(\sfv_\ell)}{\Gamma(\sfw_\ell)}\right)^{m_2} \exp\Big(\frac{t_2}{2}(\sfw_\ell^2-\sfv_\ell^2)\Big)\\
	&\quad   \times\,\,   \prod_{k=1}^n \prod_{\ell=1}^m 
	\frac{\Gamma(w_k-\sfw_\ell)\Gamma(v_k-\sfv_\ell)}{\Gamma(v_k-\sfw_\ell\,) \,\Gamma(w_k-\sfv_\ell)}
	\end{align*}
	 where the first set of contour integrals above are over $(C_{\gd_1}\times \ell_{\gd_1'})^{n}$ and the second over $(C_{\gd_2}\times \ell_{\gd_2'})^{m}$ with $\gd_2'<\gd_1'<\gd_2<\gd_1$. The proof follows the same steps as in the upcoming for log gamma polymer. In this case, however, the interchange of integrals can be easily justified by Fubini, due to the super-exponential decay of the terms
	  $\exp\Big(\frac{t_1-t_2}{2}(w_k^2-v_k^2)\Big)$ and $\exp\Big(\frac{t_2}{2}(\sfw_\ell^2-\sfv_\ell^2)\Big)$ along vertical axes.
\end{remark}
\begin{proof}[Proof of Proposition \ref{prop:joint} .]
Let us start with a general $(\ga,\hat\ga)-$log-gamma polymer such that $\vert \ga_i\vert <\delta_1$ and $\vert \hat\ga_j-\gamma\vert <\delta_1$  and with formula \eqref{2p_formula}.
We multiply and divide inside the integrals of \eqref{2p_formula} by 
$\prod_{i=1}^{m_1}\prod_{j=1}^{n_2} \Gamma(\lambda_i+\hat\ga_j)
= \prod_{j=1}^{n_2} \prod_{i=1}^{m_1} \Gamma(\lambda_i+\hat\ga_j)
$ and rearrange the terms to write \eqref{2p_formula} as
\begin{align}\label{eq:manip1}
&\int_{(\ell_\delta)^{m_1} } \dd \lambda
    \,\,s_{m_1}(\lambda)  \prod_{1\leq i,i'\leq m_1}  \Gamma(-\ga_{i'}+\gl_{i}) 
    \prod_{i=1}^{m_1}\frac{u_1^{-\gl_i}\, \sfF_{n_2,n_1}^{\hat\ga}(\gl_i)  }{ u_1^{-\ga_i}\, \sfF_{n_2,n_1}^{\hat\ga}(\ga_i)  } 
 \,\,  \prod_{i=1}^{m_1}  \frac{\prod_{j=1}^{n_2} \Gamma(\gl_i+\hat\ga_j)}{\prod_{j=1}^{n_2}  \Gamma(\ga_i+\hat\ga_j)} 
   \notag\\
&\quad\times   \int_{(\ell_{\delta+\gamma})^{n_2} } \dd \mu  
    \,\,s_{n_2}(\mu)  \prod_{1\leq j,j'\leq n_2} \Gamma(-\hat\ga_{j'}+\mu_{j}) \prod_{j=1}^{n_2}\frac{u_2^{-\mu_j} \,
   \sfF_{m_1,m_2}^{\ga}(\mu_j)  }{u_2^{-{\hat\ga_j}} \,\sfF_{m_1,m_2}^{\ga}(\hat\ga_j) } 
   \,\, \prod_{j=1}^{n_2}  \frac{\prod_{i=1}^{m_1} \Gamma(\gl_i+\mu_j)}{\prod_{i=1}^{m_1}  \Gamma(\gl_i+\hat\ga_j)}\notag\\
&=
\int_{(\ell_{\delta})^{m_1} } \dd \lambda
    \,\,s_{m_1}(\lambda)  \prod_{1\leq i,i'\leq m_1} \Gamma(-\ga_{i'}+\gl_{i}) \prod_{i=1}^{m_1}
    \frac{u_1^{-\gl_i}\, \sfF^{\,\hat\ga}_{n_1}(\gl_i)}{ u_1^{-\ga_i}\, \sfF^{\,\hat\ga}_{n_1} (\ga_i)} \notag\\
&\quad\times   \int_{(\ell_{\delta+\gamma})^{n_2} } \dd \mu  
  \,\,s_{n_2}(\mu)  \prod_{1\leq j,j'\leq n_2} \Gamma(-\hat\ga_{j'}+\mu_{j}) \prod_{j=1}^{n_2}
    \frac{u_2^{-\mu_j} \, \sfF_{m_1}^{\gl}(\mu_j) \sfF_{m_1,m_2}^{\ga}(\mu_j) }
    {u_2^{-{\hat\ga_j}} \, \sfF_{m_1}^{\gl}(\hat\ga_j) \sfF_{m_1,m_2}^{\ga}(\hat\ga_j)} .
\end{align}

We will now rewrite the integral over $\mu$ using Theorem \ref{thm:BCR} in terms of a Fredholm determinant. Observe that this integral 
represents the Laplace transform of the partition function of a $(\gl_1,...,\gl_{m_1},\ga_{m_1+1},...,\ga_{m_2}; \hat\ga_1,...,\hat\ga_{n_2})$-log-gamma polymer on $[1,m_2]\times[1,n_2]$. Therefore, the integral will be the same if the parameters are changed to
$(\gl_1+\gamma,...,\gl_{m_1}+\gamma,\ga_{m_1+1}+\gamma,...,\ga_{m_2}+\gamma; \hat\ga_1-\gamma,...,\hat\ga_{n_2}-\gamma)$. This will be technically convenient later on. We then have that
\begin{align*}
& \int_{(\ell_{\delta+\gamma})^{n_2} } \dd \mu  
  \,\,s_{n_2}(\mu)  \prod_{1\leq j,j'\leq n_2} \Gamma(-\hat\ga_{j'}+\mu_{j}) \prod_{j=1}^{n_2}
    \frac{u_2^{-\mu_j} \, \sfF_{m_1}^{\gl}(\mu_j) \sfF_{m_1,m_2}^{\ga}(\mu_j) }
    {u_2^{-{\hat\ga_j}} \, \sfF_{m_1}^{\gl}(\hat\ga_j) \sfF_{m_1,m_2}^{\ga}(\hat\ga_j)} \\
=&\int_{(\ell_{\delta})^{n_2} } \dd \mu  
  \,\,s_{n_2}(\mu)  \prod_{1\leq j,j'\leq n_2} \Gamma(-\hat\ga_{j'}+\gamma+\mu_{j}) \prod_{j=1}^{n_2}
    \frac{u_2^{-\mu_j} \, \sfF_{m_1}^{\gl+\gamma}(\mu_j) \sfF_{m_1,m_2}^{\ga+\gamma}(\mu_j)}
    {u_2^{-{\hat\ga_j}+\gamma} \,\,\sfF_{m_1}^{\gl}(\hat\ga_j) \sfF_{m_1,m_2}^{\ga}(\hat\ga_j) }\\     
=&\int_{(-\ell_{\delta})^{n_2} } \dd \mu  
 \,\,s_{n_2}(\mu)  \prod_{1\leq j,j'\leq n_2} \Gamma(-\hat\ga_{j'}+\gamma-\mu_{j}) \prod_{j=1}^{n_2}
 \frac{u_2^{\mu_j} \, \sfF_{m_1}^{\gl+\gamma}(-\mu_j) \sfF_{m_1,m_2}^{\ga+\gamma}(-\mu_j)}
 {u_2^{-{\hat\ga_j}+\gamma} \,\,\sfF_{m_1}^{\gl}(\hat\ga_j) \sfF_{m_1,m_2}^{\ga}(\hat\ga_j) }\\        
=&\det(I+K_{u_2}^\gl)_{L^2(C_{\delta_1})}, 
\end{align*}
 where  the kernel $K_{u_2}:L^2(C_{\delta_1})\to L^2(C_{\delta_1})$ equals 
 \begin{align*}
K_{u_2}^\gl(v,v')=\frac{1}{2\pi \iota} \int_{\ell_{\delta}} \frac{\dd w}{w-v'}\frac{\pi}{\sin(\pi(v-w))} 
\frac{ u_2^{w}\, \sfF^{\,\gl+\gamma}_{m_1}(-w) \sfF^{\,\ga+\gamma}_{m_1,m_2}(-w)  }{u_2^{v}\,\sfF^{\,\gl+\gamma}_{m_1}(-v)\sfF^{\,\ga+\gamma}_{m_1,m_2}(-v)} 
\prod_{j=1}^{n_2} \frac{\Gamma(v+\hat\ga_j-\gamma)}{\Gamma(w+\hat\ga_j-\gamma)}.
\end{align*}
Notice that all the poles in the product $\sfF_{m_1}^{\gl+\gamma}(-\mu_j) \sfF_{m_1,m_2}^{\ga+\gamma}(-\mu_j)$ have real part strictly larger than $\gd$. 
This is a consequence of the assumption $\gd_1<\gd<\gamma/2$ and $|\ga_i|<\gd_1$.
Moreover, the decay conditions in Theorem \ref{thm:BCR} is verified by using the asymptotics of gamma function \ref{gamma_asympto}.
 Inserting this Fredholm determinant into \eqref{eq:manip1} we obtain that the latter equals 
\begin{align}\label{lambda-fred}
\int_{(\ell_{\delta})^{m_1} } \dd \lambda
    \,\,s_{m_1}(\lambda)  \prod_{1\leq i,i'\leq m_1} \Gamma(-\ga_{i'}+\gl_{i}) \prod_{i=1}^{m_1}\frac{u_1^{-\gl_i}\, \sfF^{\,\hat\ga}_{n_1}(\gl_i)}{ u_1^{-\ga_i}\, \sfF^{\,\hat\ga}_{n_1} (\ga_i)} \,\, \det(I+K_{u_2}^\gl)_{L^2(C_{\delta_1})},
\end{align}
 The next step is to expand the Fredholm determinant in \eqref{lambda-fred} and then expand the determinants involved in this expansion. Recalling Remark \ref{remark:BCR}, which implies that the series expansion of the Fredholm determinant contain only $n_2$ number of terms, we have
 \begin{align*}
 \det(I+K_{u_2}^\gl)_{L^2(C_{\delta_1})} &=\sum_{n=0}^{n_2}\frac{1}{n!}  
 \int_{(C_{\delta_1})^n} \dd {\bf v} \,\det(K_{u_2}^\gl (v_k,v_\ell))_{n\times n} \\
 &= \sum_{n=0}^{n_2} \sum_{\sigma\in S_n} \frac{(-1)^\sigma}{n!}  \int_{(C_{\delta_1})^n} \dd {\bf v}\,
     \prod_{k=1}^n  K_{u_2}^\lambda(v_k,v_{\sigma(k)})\\
 &= \sum_{n=0}^{n_2} \sum_{\sigma\in S_n} \frac{(-1)^\sigma}{n!(2\pi\iota)^n} 
       \int_{(C_{\delta_1})^n} \dd {\bf v}\, \int_{(\ell_{\delta})^{n}} \dd{\bf w}
    \,\prod_{k=1}^n \frac{1}{w_k-v_{\sigma(k)}}\frac{\pi}{\sin(\pi(v_k-w_k))} \\
 &\qquad\qquad\qquad\qquad\qquad\times    
\frac{ u_2^{w_k}\, \sfF^{\,\gl+\gamma}_{m_1}(-w_k) \sfF^{\,\ga+\gamma}_{m_1,m_2}(-w_k)  }
{u_2^{v_k}\,\sfF^{\,\gl+\gamma}_{m_1}(-v_k) \sfF^{\,\ga+\gamma}_{m_1,m_2}(-v_k)} \frac{\sfF^{\hat\ga-\gamma}_{n_2}(v_k)}{\sfF^{\hat\ga-\gamma}_{n_2}(w_k)}\\
 &= \sum_{n=0}^{n_2}  \frac{1}{n!(2\pi\iota)^n} 
       \int_{(C_{\delta_1})^n} \dd {\bf v}\, \int_{(\ell_{\delta})^n} \dd{\bf w}
    \, \det\Big(\frac{1}{w_k-v_\ell}\Big)_{n\times n} \,\prod_{k=1}^n \frac{\pi}{\sin(\pi(v_k-w_k))} \\
 &\qquad\qquad\qquad\qquad\qquad\times    
\frac{ u_2^{w_k}\, \sfF^{\,\gl+\gamma}_{m_1}(-w_k) \sfF^{\,\ga+\gamma}_{m_1,m_2}(-w_k)  }{u_2^{v_k}\,\sfF^{\,\gl+\gamma}_{m_1}(-v_k)
\sfF^{\,\ga+\gamma}_{m_1,m_2}(-v_k)} \frac{\sfF^{\hat\ga-\gamma}_{n_2}(v_k)}{\sfF^{\hat\ga-\gamma}_{n_2}(w_k)} ,
 \end{align*}
 where in the last step we used the fact that
  \[
 \sum_{\sigma\in S_n} (-1)^\sigma \prod_{k=1}^n \frac{1}{w_k-v_{\sigma(k)}} =\det\Big(\frac{1}{w_k-v_\ell}\Big)_{n\times n}.
 \]
Inserting the Fredholm expansion into \eqref{lambda-fred} we write the latter as
\begin{align*}
&\sum_{n=0}^{n_2}\frac{1}{n!(2\pi \iota)^n}\int_{(\ell_\delta)^{m_1} } \dd \lambda
    \,\,s_{m_1}(\lambda)  \prod_{1\leq i,i'\leq m_1} \Gamma(-\ga_{i'}+\gl_{i}) \prod_{i=1}^{m_1}\frac{u_1^{-\gl_i}\, \sfF^{\,\hat\ga}_{n_1}(\gl_i)}{ u_1^{-\ga_i}\, \sfF^{\,\hat\ga}_{n_1} (\ga_i)} \\
 &\qquad   \int_{(C_{\delta_1})^n} \dd {\bf v} \int_{(\ell_{\delta})^n} \dd{\bf w}
    \, \det\Big(\frac{1}{w_k-v_\ell}\Big)_{n\times n} \,\prod_{k=1}^n \frac{\pi}{\sin(\pi(v_k-w_k))}    
\frac{ u_2^{w_k}\, \sfF^{\,\gl+\gamma}_{m_1}(-w_k) \sfF^{\,\ga+\gamma}_{m_1,m_2}(-w_k)  }{u_2^{v_k}\,\sfF^{\,\gl+\gamma}_{m_1}(-v_k)
\sfF^{\,\ga+\gamma}_{m_1,m_2}(-v_k)} \frac{\sfF^{\hat\ga-\gamma}_{n_2}(v_k)}{\sfF^{\hat\ga-\gamma}_{n_2}(w_k)}
\end{align*}
rearranging {\em formally} the integrals (however this rearrangement can be justified when $2n\leq n_1-m_1$) and 
using that $\prod_{k=1}^n \sfF^{\,\gl+\gamma}_{m_1}(-w_k) =\prod_{k=1}^n \prod_{i=1}^{m_1}  \Gamma(\lambda_i+\gamma-w_k)= \prod_{i=1}^{m_1}  \sfF^{\gamma-w}_{n}(\gl_i) $ and similarly for $\prod_{k=1}^n \sfF^{\,\gl+\gamma}_{m_1}(-v_k)$
we write the above as
\begin{align}\label{eq:formal_Fub}
&\sum_{n=0}^{n_2}  \frac{1}{n!(2\pi\iota)^n}  \int_{(C_{\delta_1})^n} \dd {\bf v} \int_{(\ell_{\delta})^n} \dd {\bf w}
      \det\Big(\frac{1}{w_k-v_\ell}\Big)_{n\times n} \prod_{k=1}^n \frac{\pi}{\sin(\pi(v_k-w_k))} 
     \frac{ u_2^{w_k}\,  \sfF^{\,\ga+\gamma}_{m_1,m_2}(-w_k)  }{u_2^{v_k}\,\sfF^{\,\ga+\gamma}_{m_1,m_2}(-v_k)} 
     \frac{\sfF^{\hat\ga-\gamma}_{n_2}(v_k)}{\sfF^{\hat\ga-\gamma}_{n_2}(w_k)}\notag\\
    &\qquad\qquad\qquad\qquad \times   \int_{(\ell_\delta)^{m_1} } \dd \lambda
    \,\,s_{m_1}(\lambda)  \prod_{1\leq i,i'\leq m_1} \Gamma(-\ga_{i'}+\gl_{i}) \prod_{i=1}^{m_1}
    \frac{u_1^{-\gl_i}\, \sfF^{\,\hat\ga}_{n_1}(\gl_i) \sfF^{\gamma-w}_{n}(\gl_i) }{ u_1^{-\ga_i}\, \sfF^{\,\hat\ga}_{n_1} (\ga_i) \sfF^{\gamma-v}_{n}(\gl_i)} .
\end{align}
 For $\gl\in \mathbb{C}$, set 
\[
\sff_{n,w,v}(\gl) := \frac{\sfF^{\,\hat\ga}_{n_1}(\gl) \sfF^{\gamma-w}_{n}(\gl) }{ \sfF^{\gamma-v}_{n}(\gl)} .
\]
Multiply and divide the last integrand by $\prod_{i=1}^{m_1} \sfF^{\gamma-v}_{n}(\ga_i) / \sfF^{\gamma-w}_{n}(\ga_i) = \prod_{k=1}^n  \sfF^{\ga+\gamma}_{m_1}(-v_k)/\sfF^{\ga+\gamma}_{m_1}(-w_k)  $ and rewrite the above as 
\begin{align}\label{eq:manip2}
&\sum_{n=0}^{n_2}  \frac{1}{n!(2\pi\iota)^n}  \int_{(C_{\delta_1})^n} \dd {\bf v} \int_{(\ell_{\delta})^n} \dd {\bf w}
      \det\Big(\frac{1}{w_k-v_\ell}\Big)_{n\times n} \prod_{k=1}^n \frac{\pi}{\sin(\pi(v_k-w_k))} 
     \frac{ u_2^{w_k}\,  \sfF^{\,\ga+\gamma}_{m_2}(-w_k)  }{u_2^{v_k}\,\sfF^{\,\ga+\gamma}_{m_2}(-v_k)} 
     \frac{\sfF^{\hat\ga-\gamma}_{n_2}(v_k)}{\sfF^{\hat\ga-\gamma}_{n_2}(w_k)}\notag\\
    &\qquad\qquad\qquad\qquad \times   \int_{(\ell_\gd)^{m_1} } \dd \lambda
    \,\,s_{m_1}(\lambda)  \prod_{1\leq i,i'\leq m_1} \Gamma(-\ga_{i'}+\gl_{i}) \prod_{i=1}^{m_1}
    \frac{u_1^{-\gl_i}\, \sff_{n,w,v}(\gl_i)  }{ u_1^{-\ga_i}\, \sff_{n,w,v}(\ga_i) } .
\end{align}

We will now use Theorem \ref{thm:BCR} to compute the integral with respect to the $\lambda$ variables as a Fredholm determinant, i.e. 
\[
 \int_{(\ell_\delta)^{m_1} } \dd \lambda
    \,\,s_{m_1}(\lambda)  \prod_{1\leq i,i'\leq m_1} \Gamma(-\ga_{i'}+\gl_{i}) \prod_{i=1}^{m_1}
    \frac{u_1^{-\gl_i}\, \sff_{n,w,v}(\gl_i)  }{ u_1^{-\ga_i}\, \sff_{n,w,v}(\ga_i) }
    =\det\big(I+K^{n,w,v}_{u_1}\big)_{L^2(C_{\delta_1})},
\]
where the kernel $K^{n,w,v}_{u_1}:L^2(C_{\delta_1} )\to L^2(C_{\delta_1})$ is given by
\begin{align}\label{KG}
K^{n,w,v}_{u_1}(\sfv,\sfv')=\frac{1}{2\pi \iota} \int_{\ell_{\delta}} \frac{\dd \sfw}{\sfw-\sfv'}\frac{\pi}{\sin(\pi(\sfv-\sfw))}
 \frac{u_1^{\sfw} \,\sff_{n,w,v}(-\sfw) }{ u_1^{\sfv} \,\sff_{n,w,v}(-\sfv)} \,\prod_{i=1}^{m_1}\frac{\Gamma(\sfv+\ga_i)}{\Gamma(\sfw+\ga_i)}.
\end{align}
All the conditions of Theorem \ref{thm:BCR} are verified: The poles of the function $\sff_{n,w,v}(-\lambda)$ are $\hat{\ga}_j$ and $\gamma-w$ which have real part strictly larger than $\delta$. The decay conditions are verified by usual estimation of gamma functions.
Formula \eqref{eq:manip2} is then written as
\begin{align*}
&\sum_{n=0}^{n_2} \frac{1}{n!(2\pi\iota)^n}  \int_{(C_{\delta_1})^n} \dd {\bf v} \int_{(\ell_{\delta})^n} \dd {\bf w}\,\,
      \det\Big(\frac{1}{w_k-v_\ell}\Big)_{n\times n} \\
 &\qquad\qquad \times \prod_{k=1}^n \frac{\pi}{\sin(\pi(v_k-w_k))} \frac{ u_2^{w_k}\,  \sfF^{\,\ga+\gamma}_{m_2}(-w_k)  }{u_2^{v_k}\,\sfF^{\,\ga+\gamma}_{m_2}(-v_k)} 
     \frac{\sfF^{\hat\ga-\gamma}_{n_2}(v_k)}{\sfF^{\hat\ga-\gamma}_{n_2}(w_k)}  \,\,
  \det\big(I+K^{n,w,v}_{u_1}\big)_{L^2(C_{\delta_1})}    
 \end{align*}
We now specialise and restrict attention to a $(0,\gamma)$-log-gamma polymer, i.e. when $\ga_i=0$ for $ i=1,2,...$ and $\hat\ga_j=\gamma>0$ for $ j=1,2,...$. Moreover, we expand the Fredholm determinant $ \det\big(I+K^{n,w,v}_{u_1}\big)_{L^2(C_{\delta_1})} $
and perform the same calculation as above to arrive to the result.
\end{proof}
The next lemma is crucial as it reveals the block determinant structure, which relates to the Fredholm expansion of the two-point distribution of the Airy process.
\begin{lemma}\label{lem:blockCauchy}
	For any sequence of complex numbers $(w_i)_{i\geq 1}, (v_i)_{i\geq 1}, (\sfw_i)_{i\geq 1}, (\sfv_i)_{i\geq 1}$, such that for
	some $\gamma>0$ it holds that $\Re(w_i), \Re(\sfw_i)<\gamma/2$ and  $\Re(v_i),\Re(\sfv_i)<\gamma/2$ for $i=1,2,...$ and furthermore $\Re(w_i-v_j)>0$ and $\Re(\sfw_i-\sfv_j)>0$, 
	the following identity is true
	\begin{align}\label{blockCauchy}
	&\det\Big(\frac{1}{w_k-v_\ell}\Big)_{n\times n} \det\Big(\frac{1}{\sfw_k-\sfv_\ell}\Big)_{m\times m}
	\prod_{k=1}^n \prod_{\ell=1}^m 
	\frac{(\gamma-\sfw_\ell-v_k\,) \,(\gamma-\sfv_\ell-w_k)}{(\gamma-\sfw_\ell-w_k)(\gamma-\sfv_\ell-v_k)}\\
	&=\int_{(0,\infty)^{n+m}} \dd x_1\cdots \dd x_{n+m} 
	\det\left(\begin{array}{cc}
	\left(e^{-x_j(w_i-v_j)}\right)_{n\times n} &   \left(-e^{-x_{j+n}(\gamma-w_i-\sfw_j)}\right)_{n\times m}  \\
	\left(e^{-x_j(\gamma-\sfv_i-v_j)}\right)_{m\times n} & \left(e^{-x_{j+n}(\sfw_j-w_i)}\right)_{m\times m} 
	\end{array}
	\right)\notag
	\end{align}
\end{lemma}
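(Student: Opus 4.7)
The plan is to recognize the left-hand side of \eqref{blockCauchy} as a single $(n+m)\times(n+m)$ Cauchy determinant in disguise, and then to represent each of its entries as an elementary integral on $(0,\infty)$. Concretely, set
\begin{align*}
a_i &= w_i \ (1\le i\le n), & a_{n+i} &= \gamma-\sfv_i \ (1\le i\le m),\\
b_j &= v_j \ (1\le j\le n), & b_{n+j} &= \gamma-\sfw_j \ (1\le j\le m),
\end{align*}
so that the Cauchy matrix $C=(1/(a_i-b_j))_{1\le i,j\le n+m}$ has exactly the four blocks $1/(w_i-v_j)$, $-1/(\gamma-w_i-\sfw_j)$, $1/(\gamma-\sfv_i-v_j)$, $1/(\sfw_j-\sfv_i)$ (the typo in the $(2,2)$ block of \eqref{blockCauchy} should read $\sfw_j-\sfv_i$). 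Applying the classical formula $\det C=\prod_{i<j}(a_i-a_j)(b_j-b_i)/\prod_{i,j}(a_i-b_j)$ and splitting each of these three products according to whether each index lies in $\{1,\dots,n\}$ or in $\{n+1,\dots,n+m\}$, the pure-block Vandermonde factors reassemble (after using the trivial relabeling $\prod_{k,\ell\le m}(\sfw_\ell-\sfv_k)=\prod_{k,\ell\le m}(\sfw_k-\sfv_\ell)$) into $\det(1/(w_k-v_\ell))_{n\times n}\det(1/(\sfw_k-\sfv_\ell))_{m\times m}$, while the mixed-block factors collapse, after cancellation of the two $(-1)^{nm}$ factors arising independently in the numerator (from $\prod_{i\le n, k\le m}(w_i-\gamma+\sfv_k)$) and in the denominator (from $\prod_{i\le n, k\le m}(w_i-\gamma+\sfw_k)$), into the stated cross ratio $\prod_{k,\ell}(\gamma-\sfw_\ell-v_k)(\gamma-\sfv_\ell-w_k)/[(\gamma-\sfw_\ell-w_k)(\gamma-\sfv_\ell-v_k)]$.

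Next, under the real-part hypotheses the four entry-types admit the representations
\begin{align*}
\tfrac{1}{w_i-v_j} &= \int_0^\infty e^{-x(w_i-v_j)}\,dx, & \tfrac{1}{\gamma-\sfv_i-v_j} &= \int_0^\infty e^{-x(\gamma-\sfv_i-v_j)}\,dx,\\
\tfrac{1}{\sfw_j-\sfv_i} &= \int_0^\infty e^{-x(\sfw_j-\sfv_i)}\,dx, & \tfrac{-1}{\gamma-w_i-\sfw_j} &= -\int_0^\infty e^{-x(\gamma-w_i-\sfw_j)}\,dx,
\end{align*}
where the sign flip in the $(1,2)$ block is forced: the naive representation of $1/(a_i-b_{n+j})$ would diverge since $\Re(w_i+\sfw_j-\gamma)<0$, so one rewrites it as $-1/(\gamma-w_i-\sfw_j)$ with $\Re(\gamma-w_i-\sfw_j)>0$, and this is exactly what produces the explicit minus sign in the $(1,2)$ block of the integrand on the right-hand side of \eqref{blockCauchy}.

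Finally, expand $\det C=\sum_\sigma(-1)^\sigma\prod_i C_{i,\sigma(i)}$, substitute each $C_{ij}$ by its integral representation while labeling the dummy variable by the column index, $C_{ij}=\int_0^\infty K_{ij}(x_j)\,dx_j$, and interchange the sum over permutations with the integrals by Fubini, which is justified since each $K_{ij}$ is an exponential in the single variable $x_j$ with a strictly positive real decay rate. After reindexing $\tau=\sigma^{-1}$, every $x_j$ appears in column $j$ only, and $\sum_\tau(-1)^\tau\prod_j K_{\tau(j),j}(x_j)=\det(K_{ij}(x_j))_{i,j}$ is precisely the block-determinantal integrand on the right-hand side of \eqref{blockCauchy}. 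The only real obstacle is the careful index-tracking in Step 1, where the two independent $(-1)^{nm}$ factors must be verified to cancel against each other in order to produce the cross term with exactly the signs stated; beyond this bookkeeping no analytic difficulty enters, because the hypotheses of the lemma are precisely those needed both for the $(n+m)$-dimensional Cauchy formula to be well-defined and for every one-dimensional integral in Step 2 to converge absolutely.
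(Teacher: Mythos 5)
Your proof is correct and follows essentially the same route as the paper's: both recognize the left-hand side as a single $(n+m)\times(n+m)$ block Cauchy determinant (your explicit choice of $a_i,b_j$ and the product-formula bookkeeping, including the cancellation of the two $(-1)^{nm}$ factors, is exactly the "recombining" step the paper performs in reverse), represent each entry as an integral over $(0,\infty)$ with the forced sign flip in the $(1,2)$ block, and pull the integrals out column by column via multilinearity. Your observation that the $(2,2)$ block of the stated integrand should read $\sfw_j-\sfv_i$ is also consistent with the paper's own proof, which writes that block as $\int_0^\infty e^{-x(\sfw_i-\sfv_j)}\,\dd x$.
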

\begin{proof}
	Expanding the Cauchy determinants in the left hand side as 
	\begin{align*}
	\det\Big(\frac{1}{w_k-v_\ell}\Big)_{n\times n} = \frac{\prod_{1\leq k<\ell\leq n}(w_k-w_\ell)(v_\ell-v_k)}{\prod_{1\leq k,\ell\leq n}(w_k-v_\ell)}
	\end{align*}
	and recombining with the product term therein we can write
	the left hand side of \eqref{blockCauchy}  as a block Cauchy determinant 
	\begin{align*}
	&\det\left(\begin{array}{cc}
	\left(\frac{1}{w_i-v_j}\right)_{n\times n} & \left(\frac{1}{w_i+\sfw_i-\gamma}\right)_{n\times m}\\
	\left(\frac{1}{\gamma-\sfv_i-v_j}\right)_{m\times n}  & \left(\frac{1}{\sfw_i-\sfv_j}\right)_{m\times m}
	\end{array}
	\right)
	\\
	&\\  
	&\qquad\qquad=
	\det\left(\begin{array}{cc}
	\left(\int_0^\infty e^{-x(w_i-v_j)} \dd x \right)_{n\times n} & \left( -\int_0^{\infty} e^{-x(\gamma-w_i-\sfw_j)} \dd x \right)_{n\times m}\\
	\left(\int_0^\infty e^{-x(\gamma-\sfv_i-v_j)} \dd x \right)_{m\times n}  & \left( \int_0^\infty e^{-x(\sfw_i-\sfv_j)} \dd x \right)_{m\times m}
	\end{array}
	\right).
	\end{align*}
	Notice that the condition $\Re(w_i), \Re(\sfw_i)<\gamma/2$ and  $\Re(v_i),\Re(\sfv_i)<\gamma/2$ for $i=1,2,...$, ensures that the above integrals are well defined.
	The final step is to use the multilinearity of the determinant, in order to pull the integrals out of the block determinant and hence arrive to the right hand side of \eqref{blockCauchy}.
\end{proof}
Using Lemma \ref{lem:blockCauchy} we will rewrite the joint Laplace transform of $Z_{(n_1,m_1)},Z_{(n_2,m_2)}$ in a form with structure closer to that of the Airy process
\eqref{blockFred}.  
\begin{proposition}\label{lem:joint_conv}
\begin{align*}
& \bbE\Big[e^{-u_1 Z_{(m_1,n_1)}-u_2Z_{(m_2,n_2)}}\Big] \\&=
 \sum_{n=0}^{n_2}\sum_{m=0}^{m_1}  \frac{(-1)^{m+n}}{n! m!(2\pi\iota)^{m+n}} 
 \int \dd {\bf v} \dd {\bf w} \int \dd \sfv \dd\sfw
 \int_{\bbR_{>0}^{n+m}} \dd \bx \int_{\bbR_{>0}^{n+m}} \dd \btau
\,\, \det\left(
        \begin{array}{cc}
        A(\bx,\btau; {\bf w},{\bf v}) & B(\bx,\btau; {\bf w}, \sfw)\\
        C(\bx,\btau; {\bf v}, \sfv) & D(\bx,\btau; \sfw, \sfv) 
        \end{array}
          \right)\\
  & \qquad\quad\times   \prod_{k=1}^n\frac{\pi(v_k-w_k)}{\sin(\pi(v_k-w_k))}  \prod_{\ell=1}^m\frac{\pi(\sfv_\ell-\sfw_\ell)}{\sin(\pi(\sfv_\ell-\sfw_\ell))}
   \prod_{k=1}^n\prod_{\ell=1}^m 
        \frac{\Gamma(1+\gamma-\sfw_\ell-w_k)\Gamma(1+\gamma-\sfv_\ell-v_k)}{\Gamma(1+\gamma-\sfw_\ell-v_k\,) \,\Gamma(1+\gamma-\sfv_\ell-w_k)}\\
   &=: \sum_{n=0}^{n_2}\sum_{m=0}^{m_1} I_{m,n}^{(N)}    
 \end{align*}
  where in the above contour integrals the $v$ and $\sfv$ variables run over $C_{\gd_1}$ while the $w$ and $\sfw$ variables 
  run over $\ell_{\delta}$ and the block matrices in the $(n+m)\times(n+m)$ determinant are given by
 \begin{align*}
 A(\bx,\btau; {\bf w},{\bf v})_{ij}    &=e^{(x_j+\tau_i)\left(\frac{\gamma}{2} -w_i\right) -(x_j+\tau_j)\left(\frac{\gamma}{2}-v_j\right)} \\
 			    &\hskip 2.5 cm \times u_2^{w_i-v_j}\, 
                            \left(\frac{\Gamma(\gamma-w_i)}{\Gamma(\gamma-v_j)}\right)^{m_2}
                            \left(\frac{\Gamma(v_j)}{\Gamma(w_i)}\right)^{n_2}, \hskip 1.6cm 1\leq i,j\leq n,\\
  B(\bx,\btau; {\bf w}, \sfw)_{ij} &= -e^{(-x_{j+n}+\tau_i)\left(\frac{\gamma}{2}-w_i\right)+
  	(-x_{j+n}+\tau_{j+n})\left(\frac{\gamma}{2}-\sfw_j\right)}\\
  &\hskip 2.5 cm  \times u_2^{w_i-\frac{\gamma}{2}} u_1^{\sfw_j-\frac{\gamma}{2}} \,
  \frac{\Gamma(\gamma-w_i)^{m_2}}{\Gamma(w_i)^{n_2}}
  \frac{\Gamma(\gamma-\sfw_j)^{n_1}}{\Gamma(\sfw_j)^{m_1}},\\
& \hskip 9.3cm  1\leq i\leq n, 1\leq j \leq m,\\
 C(\bx,\btau; {\bf v}, \sfv)_{ij}   &=
 			     e^{-(x_j+\tau_j)\left(\frac{\gamma}{2}-v_j\right) -(x_j+\tau_{i+n})\left(\frac{\gamma}{2}-\sfv_i\right)}\\
			     &\hskip 2.5 cm  \times u_1^{-(\sfv_i-\frac{\gamma}{2})} u_2^{-(v_j-\frac{\gamma}{2})}\,
                              \frac{\Gamma(\sfv_i)^{m_1}}{\Gamma(\gamma-\sfv_i)^{n_1}}
                              \frac{\Gamma(v_j)^{n_2}}{\Gamma(\gamma-v_j)^{m_2}},  \\
                              &\,\,\hskip 9.1cm 1\leq i\leq m, 1\leq j \leq n,\\
 D(\bx,\btau; \sfw, \sfv)_{ij} &=
 			   e^{(x_{j+n}+\tau_{j+n})\left(\frac{\gamma}{2} -\sfw_j\right) -(x_{j+n}+\tau_{i+n})\left(\frac{\gamma}{2}-\sfv_i\right)} \\
 			    & \hskip 2.5 cm \times u_1^{\sfw_j-\sfv_i}\, 
                            \left(\frac{\Gamma(\gamma-\sfw_j)}{\Gamma(\gamma-\sfv_i)}\right)^{n_1}
                            \left(\frac{\Gamma(\sfv_i)}{\Gamma(\sfw_j)}\right)^{m_1}, \hskip 1.8cm 1\leq i,j\leq m.
 \end{align*}
 \end{proposition}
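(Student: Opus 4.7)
The plan is to verify the identity by evaluating the $\btau$-integrals first and then the $\bx$-integrals, reducing the right-hand side to the contour expression produced by Proposition~\ref{prop:joint}. First, a direct inspection of the four blocks $A,B,C,D$ shows that each row and each column of the $(n+m)\times(n+m)$ matrix carries a uniform $\btau$-factor: every entry in row $i$ ($1\le i\le n$) contains $e^{\tau_i(\gamma/2-w_i)}$ (from $A$ and $B$), every entry in row $n+i$ contains $e^{-\tau_{n+i}(\gamma/2-\sfv_i)}$ (from $C$ and $D$), every entry in column $j$ contains $e^{-\tau_j(\gamma/2-v_j)}$, and every entry in column $n+j$ contains $e^{\tau_{n+j}(\gamma/2-\sfw_j)}$. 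Multilinearity of the determinant in its rows and columns allows pulling these factors, together with the $u$- and $\Gamma$-factors that appear uniformly in each row or column, out of the determinant. The residual block matrix coincides, up to factors $u_1^{-\gamma/2}u_2^{-\gamma/2}$ on $B$ and $u_1^{\gamma/2}u_2^{\gamma/2}$ on $C$, with the right-hand side of Lemma~\ref{lem:blockCauchy}; these residuals cancel in every term of the determinantal expansion, since each permutation contributes the same number of $B$- and $C$-entries.

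Second, each $\tau_k$ for $1\le k\le n$ appears simultaneously as a row factor for row $k$ and as a column factor for column $k$, with combined dependence $e^{-\tau_k(w_k-v_k)}$. Since $\Re(w_k-v_k)=\delta-\delta_1>0$, one has $\int_0^\infty e^{-\tau_k(w_k-v_k)}\dd\tau_k=(w_k-v_k)^{-1}$, and analogously the integration in $\tau_{n+\ell}$ produces $(\sfw_\ell-\sfv_\ell)^{-1}$. The remaining $u$- and $\Gamma$-factors combine diagonally in $k$ to produce precisely the single-variable weight $u_2^{w_k-v_k}(\Gamma(\gamma-w_k)/\Gamma(\gamma-v_k))^{m_2}(\Gamma(v_k)/\Gamma(w_k))^{n_2}$ appearing in Proposition~\ref{prop:joint}, and analogously for the $\sfw,\sfv$ variables.

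Third, the remaining $\bx$-integral is exactly the right-hand side of Lemma~\ref{lem:blockCauchy}, whose left-hand side equals $\det(1/(w_k-v_\ell))_{n\times n}\det(1/(\sfw_k-\sfv_\ell))_{m\times m}$ times the rational cross-term $\prod_{k,\ell}(\gamma-\sfw_\ell-v_k)(\gamma-\sfv_\ell-w_k)/[(\gamma-\sfw_\ell-w_k)(\gamma-\sfv_\ell-v_k)]$. Four applications of $\Gamma(1+z)=z\Gamma(z)$ per pair $(k,\ell)$ combine this rational factor with the shifted cross-term in the target into the unshifted cross-term of Proposition~\ref{prop:joint}. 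Finally, the identity $\pi(v_k-w_k)/\sin(\pi(v_k-w_k))\cdot(w_k-v_k)^{-1}=-\pi/\sin(\pi(v_k-w_k))$ and its $\sfw,\sfv$ analogue reproduce the sine factors of Proposition~\ref{prop:joint}, the resulting $n+m$ sign flips being absorbed by the explicit prefactor $(-1)^{n+m}$ in the target; the outer sums and the $1/(n!\,m!(2\pi\iota)^{n+m})$ normalisations match by inspection.

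The identity is structurally straightforward but requires careful bookkeeping; the main subtlety will be the choice of the $\gamma/2$-``gauge'' in the $\btau$-exponents, which is the unique value that both keeps the $\tau$-integrals convergent (given $\delta_1<\delta<\gamma/2$) and makes the $u_1^{\pm\gamma/2}u_2^{\pm\gamma/2}$ residuals on $B$ and $C$ cancel identically in every term of the block-determinantal expansion. Justifying Fubini between the contour integrations in $({\bf v},{\bf w},\sfv,\sfw)$ and the $\btau,\bx$ integrations is standard and follows from the absolute integrability of the $\btau,\bx$ integrand together with the asymptotic estimate~\eqref{gamma_asympto} along the vertical contours.
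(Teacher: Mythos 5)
Your argument is correct and is essentially the paper's proof run in reverse: the paper starts from Proposition \ref{prop:joint}, splits the cross term via $z\Gamma(z)=\Gamma(1+z)$, represents $1/(w_k-v_k)$ and $1/(\sfw_\ell-\sfv_\ell)$ as $\tau$-integrals, invokes Lemma \ref{lem:blockCauchy}, and distributes the same row/column factors into the block determinant, whereas you undo these steps starting from the target expression. The only (harmless) imprecision is the claimed residual $u_1^{\pm\gamma/2}u_2^{\pm\gamma/2}$ factors on the $B$ and $C$ blocks: in the $\gamma/2$-gauge you describe there are in fact none, though your observation that any such residuals would cancel permutation-by-permutation (equal numbers of $B$- and $C$-entries) is also valid.
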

 \begin{proof}
 We begin with the formula obtained in Proposition \ref{prop:joint} and use the identity $z\Gamma(z)=\Gamma(1+z)$ to write the product 
 \begin{align*}
 &\prod_{k=1}^n\prod_{\ell=1}^m 
  \frac{\Gamma(\gamma-\sfw_\ell-w_k)\Gamma(\gamma-\sfv_\ell-v_k)}{\Gamma(\gamma-\sfw_\ell-v_k\,) \,\Gamma(\gamma-\sfv_\ell-w_k)}\\
=& \prod_{k=1}^n \prod_{\ell=1}^m 
    \frac{(\gamma-\sfw_\ell-v_k\,) \,(\gamma-\sfv_\ell-w_k)}{(\gamma-\sfw_\ell-w_k)(\gamma-\sfv_\ell-v_k)}
\prod_{k=1}^n\prod_{\ell=1}^m 
        \frac{\Gamma(1+\gamma-\sfw_\ell-w_k)\Gamma(1+\gamma-\sfv_\ell-v_k)}{\Gamma(1+\gamma-\sfw_\ell-v_k\,) \,\Gamma(1+\gamma-\sfv_\ell-w_k)}.
 \end{align*}
 Moreover, we write the term
 \begin{align*}
   \prod_{k=1}^n  \frac{\pi }{\sin(\pi(v_k-w_k))} 
 &=\prod_{k=1}^n  \frac{1}{ (v_k-w_k)}  \prod_{k=1}^n  \frac{\pi (v_k-w_k) }{\sin(\pi(v_k-w_k))} \\
 &=(-1)^n \prod_{k=1}^n  \int_{0}^\infty  e^{ \tau_k (v_k-w_k)} \dd\tau_k \,\, \prod_{k=1}^n  \frac{\pi (v_k-w_k) }{\sin(\pi(v_k-w_k))} ,
 \end{align*}
 and we also write the analogous formula for $\prod_{\ell=1}^m \pi/\sin(\pi(\sfv_\ell-\sfw_\ell))$, by replacing in the above formula $v_k,w_k$
 by $\sfv_\ell,\sfw_\ell$ and $\tau_k$ by $\tau'_\ell$.
 Then we use Lemma \ref{lem:blockCauchy} to obtain a block Cauchy determinant, to which we distribute into its first $n$ rows the
 terms 
 \begin{align*}
 u_2^{w_k-\frac{\gamma}{2}} \,e^{-\tau_k \left(w_k-\frac{\gamma}{2}\right)} \,
 \frac{\Gamma(\gamma-w_k)^{m_2}}{\Gamma(w_k)^{n_2}},  \hskip 1cm \text{for} \hskip 0.3cm k=1,...,n,
 \end{align*} 
 into the first $n$ columns the terms 
 \[u_2^{-(v_k-\frac{\gamma}{2})}e^{\tau_k(v_k-\frac{\gamma}{2})}\frac{\Gamma(v_k)^{n_2}}{\Gamma(\gamma-v_k)^{m_2}},
 \hskip 1.3cm \text{for} \hskip 0.3cm k=1,...,n,
 \]
  into rows $n+1$ to $n+m$ the terms 
 \[u_1^{-(\sfv_\ell-\frac{\gamma}{2})}e^{\tau_{\ell+n}(\sfv_\ell-\gamma/2)} \frac{\Gamma(\sfv_\ell)^{m_1}}{\Gamma(\gamma-\sfv_\ell)^{n_1}},
 \hskip 1.2cm \text{for} \hskip 0.3cm \ell=1,...,m,
 \]
 and into columns $n+1$ to $n+m$ the terms
 \begin{align*}
  u_1^{\sfw_\ell-\frac{\gamma}{2}} e^{-\tau_{\ell+n} \left(\sfw_\ell-\frac{\gamma}{2}\right)}
 \frac{\Gamma(\gamma-\sfw_\ell)^{n_1}}{\Gamma(\sfw_\ell)^{m_1}}, \hskip 1.5cm \text{for} \hskip 0.3cm \ell=1,...,m.
  \end{align*}  
   \end{proof}
    We will further manipulate the formula obtained in Proposition 
   \ref{lem:joint_conv} in order to bring it in a form suitable for asymptotics. Recall the definition of the functions $G,F$ from \eqref{FG}
   and denote by 
     \begin{align}
  	\tilde G(z) = G(z)-G(\gamma/2) \quad \text{and} \quad
  	\tilde F(z) = F(z)-F(\gamma/2). \notag
  \end{align}
  In the block determinant that appears in Proposition \ref{lem:joint_conv} we multiply each column between $1$  and $n$  by the number  $\exp\big(t_2F(\gamma/2)\big)$ and each row between $n+1$ and $m+n$ by the number $\exp\big(t_1F(\gamma/2)\big)$. Moreover, we multiply each column between $n+1$ and $m+n$  by the number  $\exp\big(-t_1F(\gamma/2)\big)$ and each row between $1$ and $n$ by the number  $\exp\big(-t_2F(\gamma/2)\big)$.
  This operation does not change the value of the determinant because if these number are pulled outside the determinant by mulitlinearity, they produce a pre factor
   $\exp\big((nt_2+mt_1)F(\gamma/2)-(nt_2+mt_1)F(\gamma/2)\big)$ , which is obviously equal to one.
  On the other hand it provides a useful centering of the entries. Using the particular choice for values of $u_1,u_2$ as in \eqref{u1u2} and for the lattice points  
  $(m_1,n_1),(m_2,n_2)$ as in \eqref{pointN2/3} and making a straightforward calculation employing Fubini, we have for fixed $m,n$ that
  we can arrive at formula 
   \begin{align}\label{eq:mod_block}
  	&  I_{m,n}^{(N)}= \\
	&=  \frac{ (-1)^{m+n}}{n! m!(2\pi\iota)^{m+n}} 
	 \int_{\bbR_{>0}^{n+m}} \dd \btau
	 \int_{\bbR_{>0}^{n+m}} \dd \bx
  	\int \dd {\bf v} \dd {\bf w} \int \dd \sfv \dd\sfw
  	\,\, \det\left(
  	\begin{array}{cc}
  		\tilde{A}(\bx,\btau; {\bf w},{\bf v}) & \tilde{B}(\bx,\btau; {\bf w}, \sfw)\\
  		\tilde{C}(\bx,\btau; {\bf v}, \sfv) & \tilde{D}(\bx,\btau; \sfw, \sfv) 
  	\end{array}
  	\right)\notag\\
	  	& \qquad\quad\times   \prod_{k=1}^n\frac{\pi(v_k-w_k)}{\sin(\pi(v_k-w_k))}  \prod_{\ell=1}^m\frac{\pi(\sfv_\ell-\sfw_\ell)}{\sin(\pi(\sfv_k-\sfw_k))}
  	\prod_{k=1}^n\prod_{\ell=1}^m 
  	\frac{\Gamma(1+\gamma-\sfw_\ell-w_k)\Gamma(1+\gamma-\sfv_\ell-v_k)}{\Gamma(1+\gamma-\sfw_\ell-v_k\,) \,\Gamma(1+\gamma-\sfv_\ell-w_k)} \notag
  \end{align}
  where  
  \begin{align*}
  	\tilde A(\bx,\btau; {\bf w},{\bf v})_{ij}    &=\exp\Bigg(N\big(\tilde G(v_j)-\tilde G(w_i)\big)+t_2N^{2/3}\big(\tilde F(w_i)-\tilde F(v_j)\big)+r_2N^{1/3}\big(v_j-w_i\big)+\\
  	& \qquad\qquad\qquad  + \big(x_j+\tau_i\big)\left(\frac{\gamma}{2} -w_i\right) -(x_j+\tau_j)\left(\frac{\gamma}{2}-v_j\right)\Bigg),\\
	& \hskip 8.6cm \text{for} \hskip 0.5cm 1\leq i,j\leq n,\\
  	\tilde B(\bx,\btau; {\bf w}, \sfw)_{ij} &=- \exp\Bigg(- N\big( \tilde G(\sfw_j)+\tilde G(w_i) \big)
	 +t_2N^{2/3} \tilde F(w_i)+t_1N^{2/3}\tilde F(\sfw_j) \\
	&\qquad\quad -r_2N^{1/3} \Big(w_i-\frac{\gamma}{2}\Big)-r_1N^{1/3} \Big(\sfw_j-\frac{\gamma}{2}\Big) \\
  	&\qquad\qquad + \big(-x_{j+n}+\tau_i\big)\left(\frac{\gamma}{2}-w_i\right)
	 + \big(-x_{j+n}+\tau_{j+n}\big)\left(\frac{\gamma}{2}-\sfw_j\right)\Bigg),\\
	 & \hskip 8.5cm \text{for}\hskip 0.5cm 1\leq i\leq n, 1\leq j \leq m,
  \end{align*}
  \begin{align*}
  	\tilde C(\bx,\btau; {\bf v}, \sfv)_{ij}   &=\,\,\exp\Bigg(N\big(\tilde G(v_j) + \tilde G(\sfv_i) \big)  
	  -t_2N^{2/3} \tilde F(v_j)-t_1N^{2/3} \tilde F(\sfv_i) \\
	&\qquad\quad +r_2N^{1/3} \Big(v_j-\frac{\gamma}{2}\Big)+r_1N^{1/3} \Big(\sfv_i-\frac{\gamma}{2} \Big)  \\
	&\qquad\qquad -\big(x_j+\tau_j\big)\left(\frac{\gamma}{2} -v_j\right)
  	  -\big(x_j+\tau_{i+n}\big)\left(\frac{\gamma}{2}-\sfv_i\right)\Bigg), \\
	  & \hskip 8.5cm   \text{for} \hskip 0.5cm 	1\leq i\leq m, 1\leq j \leq n,\\
  	\tilde D(\bx,\btau; \sfw, \sfv)_{ij} &=\exp\Bigg(N\big(\tilde G(\sfv_i)-\tilde G(\sfw_j)\big)+t_1N^{2/3}\big(\tilde F(\sfw_j)-\tilde F(\sfv_i)\big)
	+r_1N^{1/3}\big(\sfv_i-\sfw_j\big)+
  	\\
  	& \qquad \qquad \qquad +\big(x_{j+n}+\tau_{j+n}\big)\left(\frac{\gamma}{2} -\sfw_j\right) -\big(x_{j+n}+\tau_{i+n}\big)\left(\frac{\gamma}{2}-\sfv_i\right)\Bigg), \\
	& \hskip 8.5cm \text{for} \hskip 0.5cm 1\leq i,j\leq m.
  \end{align*}
  The use of Fubini is useful, in order to avoid crossing the poles at $\sfw_\ell+w_k=\gamma$ in the contour deformation that follows.
 Using the asymptotics \eqref{gamma_asympto}, we can justify the Fubini for fixed $m,n$ (or for values that are a sufficiently small multiple of $N^{2/3}$), since the integrand in \eqref{eq:mod_block}
 is bounded by
 \begin{align}\label{eq:Fub_blue}
 \exp\Big(
 &-\frac{\pi}{2}\sum_{\substack{ 1\leq k \leq n \\ 1\leq \ell \leq m}} |\sfw_\ell+w_k | +\frac{\pi n}{2}  \sum_{1\leq \ell \leq m } |\sfw_\ell| +\frac{\pi m}{2} \sum_{1\leq k \leq n} |w_k| \\
 &\hskip 3cm
 - \frac{\pi (n_1-m_1)}{2} \sum_{1\leq \ell \leq m} |\sfw_\ell|
 -\frac{\pi (m_2-n_2)}{2} \sum_{1\leq k \leq n} |w_k| 
 \Big),\notag
 \end{align}
   which is integrable since $m_2-n_2\,, \,n_1-m_1 \approx N^{2/3}$.
   \vskip 2mm
The main asymptotic behaviour of each fixed summand in \eqref{eq:mod_block} is captured by the determinant. The study of the asymptotic behaviour of the
terms $\tilde A, \tilde B, \tilde C, \tilde D$ in there is done via the steepest decent method.  
  The basic idea of this method is to find the critical points for the integrand and then deform the contour of integration so that it passes through or
   close to the critical point and away from the critical point the integrands decay fast.
    The contours over which the  ${\bf v}, {\bf w}, \sfv, \sfw$ variables originally integrate are depicted in Figure \ref{fig:initial_cont}.
     These contours need to be deformed 
  to the steepest descent contours, which are depicted in Figure \ref{fig:steep_cont}. 
 The contour $C_w$ is symmetric over the real axis. Its upper part consists of a ray from $\gamma/2+\eta N^{-1/3}$, with fixed, small $\eta>0$, leaving at angle $\pi/3$ until the point $\gamma/2+\eta N^{-1/3}+\gamma e^{\frac{\pi \iota}{3}}$ and from this point it goes vertically to infinity. The $C_v$ contour is also chosen to be symmetric over the real axis. 
 It departs from $\gamma/2$ at an angle $2\pi /3$, as a straight line until the point $\gamma e^{2\pi \iota/3}$, where it follows the circular arc $\gamma e^{\iota t}, 2\pi/3\leq t \leq 4\pi/3$ and, finally, from there it returns to $\gamma/2$ via a straight line.
  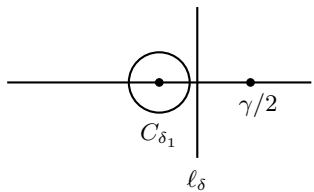
\begin{figure}[t]
  		\begin{center}
  		\begin{tikzpicture}
  		\draw[thick] (0,0) circle (0.4);
  		\draw[thick] (-2,0)--(2,0);
  		\draw[thick] (0.5,-1)--(0.5,1);
  		\node at (1.3,-0.3){\small{$\gamma/2$}};
  		\node at (0, -0.7){\small{$C_{\delta_1}$}};
  		\node at (0.5,-1.3){\small{$\ell_{\delta}$}};
  		\draw [fill] (1.2,0) circle [radius=0.05];
  		\draw [fill] (0,0) circle [radius=0.05];
  		\end{tikzpicture}
  	\end{center}
  	\caption{Initial contours.   }\label{fig:initial_cont}
  	\end{figure}
  	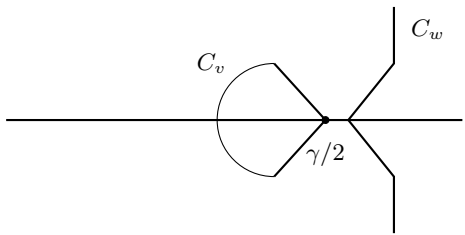
\begin{figure}[t]
  		\begin{center}
  		\begin{tikzpicture}[scale=1.5]
  		\draw[thick] (-2,0)--(2,0);
  		\draw[thick] (1.4,-1)--(1.4,-0.5)--(1,0)--(1.4,0.5)--(1.4,1);
  		\draw[thick] (0.35,0.5)--(0.8,0)--(0.35,-0.5);
  		  \draw (0.35,0.5) arc (90:270:0.5);
  		  \draw [fill] (0.8,0) circle [radius=0.03];
  		  \node at (0.8, -0.3){\small{$\gamma/2$}};
  		  \node at (-0.2,0.5){\small{$C_v$}};
  		  \node at (1.7,0.8){\small{$C_w$}};
  		\end{tikzpicture}
  	\end{center}
  	\caption{Steepest decent contours } \label{fig:steep_cont}
  	\end{figure}
In order to motivate the choice of the contours $C_v,C_w$, we need to identify the behaviour of the functions $\tilde G, \tilde F$
 around the (critical) point $\gamma/2$ and away from it. We do this by employing a Taylor expansion around $\gamma/2$.
  To this end, we need to record	
  \begin{align*}
  	G'(z)&=\Psi(z)+\Psi(\gamma-z)+f_{\gamma}\ ,\\
  	G''(z)&=\Psi'(z)-\Psi'(\gamma-z)\ ,\\
  	G'''(z)&=\Psi''(z)+\Psi''(\gamma-z)\ .\\
  \end{align*} 
  By the choice of $f_{\gamma}=-2\Psi(\gamma/2)$ we have that $G'(\gamma/2)=G''(\gamma/2)=0$, while  $G'''(\gamma/2)\neq 0$.
  Consequently, around the critical point $\gamma/2$ the function $\tilde G(z)$ behaves as 
  \begin{align}\label{Gtaylor}
  	\tilde G(z) =G(z)-G(\gamma/2)= \frac{G'''(\gamma/2)}{6} (z-\gamma/2)^3 + o\big((z-\gamma/2)^3\big)
  \end{align}
   Moreover, $\gamma/2$ is also the critical point of the function $F(z)$ because
  \begin{align*}
  	F'(\gamma/2)&=\Psi(\gamma/2)-\Psi(\gamma/2)=0,
  \end{align*} 
  and around it we have
  \begin{align}\label{Ftaylor}
  	\tilde F(z)=F(z)-F(\gamma/2)= \frac{F''(\gamma/2)}{2} (z-\gamma/2)^2 +o\big((z-\gamma/2)^2\big) ,
  \end{align}
  The cubic behaviour of the function $\tilde G$ as in \eqref{Gtaylor} shows that along a straight line departing at an angle $\pi/3$ from $\gamma/2$,
  $\tilde G$ is strictly negative with the value zero at $\gamma/2$. Similar estimates hold along straight lines departing at an angle $2\pi/3$.
  \vskip 2mm
  The behaviour away from the critical point $\gamma/2$ is described by the following lemmas. 
 The first one is due to \cite{BCR13} Lemmas 2.4, 2.5, which we recall :
  	\begin{lemma}\label{lem:BCR}
  		There exists $\gamma^*>0$ such that for all $\gamma<\gamma^*$ the following two facts hold:
		\begin{itemize}
		\item[(1)] There exists a constant $c_1>0$ such that for all $v$ along the straight line segments of $C_v$
		\begin{equation*}
			\Re[G(v)-G(\gamma/2)]\leq \Re[-c_1\gamma^3(v-\gamma/2)^3]\ .
		\end{equation*}
		
		\item[(2)]There exists a constant $c_2>0$ such that for all $w$ along the contour $C_w$ at the distance less than $\gamma$ from $\gamma/2$
		\begin{equation*}
			\Re[G(w)-G(\gamma/2)]\geq \Re[-c_2\gamma^3(w-\gamma/2  )^3]\ .
		\end{equation*}
		
		\item[(3)] There exists $c>0$, such that for all $v$ along the circular part of the contour $C_v$
		\begin{equation*}
		\Re[G(v)-G(\gamma/2)]\leq -c\ .
		\end{equation*}
		\end{itemize}
  	\end{lemma}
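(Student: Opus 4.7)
The plan is to derive all three bounds from a single integral representation of $G$ around its critical point $\gamma/2$, and then to split the analysis into the straight-ray and circular-arc portions of the contours. Starting from Gauss's integral $\Psi(z) = -\gamma_E + \int_0^\infty (e^{-t}-e^{-zt})(1-e^{-t})^{-1}\,dt$, a direct computation yields
\[
G(\gamma/2+u)-G(\gamma/2) \,=\, -2\int_0^\infty \frac{e^{-\gamma t/2}\bigl(\sinh(ut)-ut\bigr)}{t(1-e^{-t})}\,dt \,=\, -\sum_{k\ge1} \frac{2 a_k\,u^{2k+1}}{(2k+1)!},
\]
with $a_k := \int_0^\infty t^{2k}\,e^{-\gamma t/2}(1-e^{-t})^{-1}\,dt>0$ and in particular $a_1=-G'''(\gamma/2)/2$. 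Only odd powers of $u$ appear, and an elementary asymptotic analysis shows $a_k$ is comparable to $\gamma^{-(2k+1)}$ as $\gamma\downarrow 0$, so in particular the cubic coefficient has the right sign to cooperate with the contour geometry.

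For parts (1) and (2), write $u = re^{i\theta}$ with $\theta\in\{\pm\pi/3,\pm 2\pi/3\}$, so that $u^3 = \pm r^3$. The cubic contribution to $\Re[G(\gamma/2+u)-G(\gamma/2)]$ is $\mp a_1 r^3/3$, whose sign matches that prescribed by the lemma. The higher-order tail is controlled via the pointwise bound $|\sinh(x)-x-x^3/6|\le |x|^5 e^{|\Re x|}/120$; along these rays $|\Re(ut)| = rt/2$, so the exponential factor $e^{-\gamma t/2}$ in the integrand tames the growth provided $r\le\gamma$. Carrying out this integral estimate gives a remainder of size $O(r^5\gamma^{-5})$. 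For $\gamma$ sufficiently small and $r\le \gamma$, the cubic $a_1 r^3/3\asymp r^3/\gamma^3$ dominates this remainder, yielding the stated bounds with constants $c_1,c_2$ independent of $\gamma<\gamma^\ast$.

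For part (3), the previous estimate degenerates on the circular arc (where $|v-\gamma/2|=\gamma$), so a separate compactness argument is needed. The plan is to show directly that $\theta\mapsto \Re[G(\gamma/2+\gamma e^{i\theta})-G(\gamma/2)]$ is a continuous function on the closed arc $\theta\in[2\pi/3,4\pi/3]$ that is strictly negative throughout. Using the reflection formula for $\Gamma(z)\Gamma(\gamma-z)$ together with standard estimates for $\log\Gamma$, one verifies that for $\gamma<\gamma^\ast$ the only critical point of $\Re G$ on the circle of radius $\gamma$ about $\gamma/2$ is at $\gamma/2$ itself, so $\Re[G-G(\gamma/2)]$ attains no zero in the interior of the arc; at the endpoints $\theta=\pm 2\pi/3$ strict negativity follows from part (1). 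Compactness then yields the uniform bound $\Re[G(v)-G(\gamma/2)]\le -c$.

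The main obstacle lies in the quantitative remainder analysis of parts (1)--(2): since the leading cubic term $a_1 r^3/3\asymp r^3\gamma^{-3}$ and the remainder $O(r^5\gamma^{-5})$ are of the same qualitative order at $r=\gamma$, the cubic dominates only when $r/\gamma$ is bounded by a fixed fraction and $\gamma$ is below a universal threshold $\gamma^\ast$. Handling this cleanly requires a sharp Taylor-remainder estimate and careful tracking of the $\gamma$-dependence of the integrals $a_k$. The circular part in (3) is conceptually cleaner but needs a monotonicity or convexity argument to exclude spurious critical points of $\Re G$ on the arc, which is precisely what forces the smallness restriction $\gamma<\gamma^\ast$.
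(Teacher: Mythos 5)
The paper does not actually prove this lemma: it is quoted verbatim from Borodin--Corwin--Remenik \cite{BCR13} (their Lemmas 2.4 and 2.5), so the only question is whether your argument stands on its own. Your integral representation
$G(\gamma/2+u)-G(\gamma/2)=-2\int_0^\infty e^{-\gamma t/2}\,\frac{\sinh(ut)-ut}{t(1-e^{-t})}\,dt$
is correct and is a clean way to see the cubic behaviour at the critical point, and your remainder estimate does prove (1) and (2) on the portion of the rays with $|v-\gamma/2|\le c_0\gamma$ for a small universal fraction $c_0$. But that is not what the lemma asserts. The straight segments of $C_v$ run from $\gamma/2$ all the way to $\gamma e^{\pm 2\pi\iota/3}$, i.e.\ to distance $\sqrt{7}\gamma/2>\gamma$ from the critical point, and (2) requires the bound up to distance $\gamma$ on $C_w$. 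On these outer portions your remainder integral $\int_0^\infty t^4 e^{-(\gamma-r)t/2}(1-e^{-t})^{-1}dt$ blows up as $r\uparrow\gamma$ (and the series representation has radius of convergence only $\gamma/2$), so the Taylor-domination argument gives nothing there. You acknowledge this ("the cubic dominates only when $r/\gamma$ is bounded by a fixed fraction") but supply no substitute argument — e.g.\ a monotonicity statement for $r\mapsto \Re\,g(re^{\pm 2\pi\iota/3})$ — for the outer part of the segments. That is the genuine missing piece for (1)--(2); note the claimed bound $-c_1\gamma^3 r^3$ is very weak (the constant carries a factor $\gamma^3$, not $\gamma^{-3}$), so what is really needed is a proof of strict negativity on the whole segment, not sharpness of the constant.

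Part (3) has a more serious problem: you have misread the contour. The circular part of $C_v$ is the arc $\{\gamma e^{\iota t}: 2\pi/3\le t\le 4\pi/3\}$ centred at the \emph{origin}, not a circle of radius $\gamma$ about $\gamma/2$; it lies at distance between $\sqrt{7}\gamma/2$ and $3\gamma/2$ from the critical point and passes through $v=-\gamma$, i.e.\ within distance $\gamma$ of the pole of $\Gamma$ at $0$. Uniform negativity of $\Re[G(v)-G(\gamma/2)]$ there is therefore a genuinely quantitative statement about $\log|\Gamma|$ near its pole (this is where the smallness restriction $\gamma<\gamma^*$ really bites in \cite{BCR13}), and it cannot be obtained from soft compactness. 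Moreover your logic is circular: "no critical point of $\Re G$ on the arc implies no zero of $\Re[G-G(\gamma/2)]$ in the interior" is false as stated (a function can vanish away from its critical points), and the claim about critical points is itself unproved. To repair (3) you would need to estimate $\log|\Gamma(v)|-\log|\Gamma(\gamma-v)|+f_\gamma\Re(v)-G(\gamma/2)$ explicitly on the arc for small $\gamma$, as is done in \cite{BCR13}.
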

  The cubic behaviour in \eqref{Gtaylor} suggests rescaling around $\gamma/2$ by change of variables
  \begin{align}\label{variable_change}
  	\tilde v_i&= N^{1/3}(v_i-\gamma/2)\ ,\qquad\qquad \tilde w_i= N^{1/3}(w_i-\gamma/2), \notag\\
  	\tilde \sfv_i&= N^{1/3}(\sfv_i-\gamma/2)\ ,\qquad\qquad \tilde \sfw_i= N^{1/3}(\sfw_i-\gamma/2), \\
  	\tilde x_i&= N^{-1/3}x_i\ ,\qquad\qquad\qquad\quad \tilde \tau_i=N^{-1/3}\tau_i. \notag
  \end{align}
\begin{proposition}\label{prop:dom_conv}
Recall the term $I^{(N)}_{m,n}$ in \eqref{eq:mod_block}. Setting the parameters $(m_1,n_1), (m_2,n_2)$ as in \eqref{pointN2/3} and the 
parameters $u_1,u_2$ as in \eqref{u1u2} (depending all on $N$), then as $N$ tends to infinity $I^{(N)}_{m,n}$ converges for any fixed $m,n$ to
  \begin{align}
  	\label{params: gen_term}
  	I_{m,n}= \frac{(-1)^{n+m}}{n! m!(2\pi\iota)^{m+n}} \int_{\bbR_{>0}^{n+m}}  \dd \tilde\btau
  	\int \dd \tilde{\bf v} \dd \tilde{\bf w} \int \dd \tilde\sfv \dd\tilde\sfw
  	\int_{\bbR_{>0}^{n+m}} \dd \tilde\bx 
  	\,\, \det\left(
  	\begin{array}{cc}
  		A^*(\tilde\bx,\tilde\btau; \tilde{\bf w},\tilde{\bf v}) & B^*(\tilde\bx,\tilde\btau; \tilde{\bf w}, \tilde\sfw)\\
  		C^*(\tilde\bx,\tilde\btau; \tilde{\bf v}, \tilde\sfv) & D^*(\tilde\bx,\tilde\btau; \tilde\sfw, \tilde\sfv) 
  	\end{array}
  	\right),
  \end{align}
  where the variables $\tilde v$ and $\tilde \sfv$ integrate along the contour $e^{-2\pi \iota/3}\mathbb{R}_{>0}\cup e^{+2\pi \iota/3}\mathbb{R}_{>0}$ and $\tilde w$ and $\tilde \sfw$ integrate along the contour $\{e^{-\pi \iota/3}\mathbb{R}_{>0}+\eta\}\cup \{e^{+\pi \iota/3}\mathbb{R}_{>0}+\eta\}$ for any horizontal shift $\eta>0$. The  block matrix in (\ref{params: gen_term}) is given by 
    \begin{align*}
  	A^*(\tilde \bx,\tilde\btau; \tilde{\bf w},\tilde{\bf v})_{ij}    &=\exp\Big(\frac{G'''(\gamma/2)}{6}(\tilde v_j^3- \tilde w_i^3)+t_2\frac{F''(\gamma/2)}{2}(\tilde w_i^2-\tilde v_j^2)\\
	&\hskip 4cm  +r_2(\tilde v_j-\tilde w_i)-(\tilde x_j+\tilde\tau_i)\tilde w_i +(\tilde x_j+\tilde\tau_j)\tilde v_j\Big),
  	\\
  	B^*(\tilde\bx,\tilde\btau; \tilde{\bf w},\tilde \sfw)_{ij} &=- \exp\Big(-\frac{G'''(\gamma/2)}{6}(\tilde\sfw_j^3+\tilde w_i^3)+t_2\frac{F''(\gamma/2)}{2}\tilde w_i^2+t_1\frac{F''(\gamma/2)}{2}  \tilde\sfw_j^2 \\
	&\hskip 4cm-r_2\tilde w_i-r_1\tilde\sfw_j-(-\tilde x_{j+n}+\tilde\tau_i)\tilde w_i-
  	(-\tilde x_{j+n}+\tilde\tau_{j+n})\tilde\sfw_j\Big) ,
  	\\
  	C^*(\tilde\bx,\tilde\btau; \tilde{\bf v}, \tilde\sfv)_{ij}   &=\exp\Big(\frac{G'''(\gamma/2)}{6}(\tilde v_j^3+\tilde\sfv_i^3)-t_2\frac{F''(\gamma)}{2}\tilde v_j^2-t_1
	\frac{F''(\gamma/2)}{2}\tilde\sfv_i^2  \\
	&\hskip 4cm +r_2\tilde v_j+r_1\tilde\sfv_i+(\tilde x_j+\tilde\tau_j)v_j +(\tilde x_j+\tilde\tau_{i+n})\tilde\sfv_i\Big) ,
  	\\
  	D^*(\tilde\bx,\tilde\btau; \tilde\sfw, \tilde\sfv)_{ij} &=\exp \Big(\frac{G'''(\gamma/2)}{6}(\tilde\sfv_i^3-\tilde\sfw_j^3)+t_1\frac{F''(\gamma/2)}{2}(\tilde\sfw_j^2-\tilde\sfv_i^2)\\
	&\hskip 4cm +r_1(\tilde\sfv_i-\tilde\sfw_j)-(\tilde x_{j+n}+\tilde\tau_{j+n})\tilde\sfw_j +(\tilde x_{j+n}+\tilde\tau_{i+n})\tilde\sfv_i\Big).
  	\\
  \end{align*}  
  \begin{proof}
  Let us start with the formula for $I_{m,n}^{(N)}$ provided by \eqref{eq:mod_block} and denote
  \begin{align}\label{CT}
  \sfC \sfT_{m,n}:=
  \prod_{k=1}^n\prod_{\ell=1}^m 
  	\frac{\Gamma(1+\gamma-\sfw_\ell-w_k)\Gamma(1+\gamma-\sfv_\ell-v_k)}{\Gamma(1+\gamma-\sfw_\ell-v_k\,) \,\Gamma(1+\gamma-\sfv_\ell-w_k)}
  \end{align}
  {\bf Step 1. (Contour deformation)}  We deform the contour $C_{\delta_1}$, on which $v$ and $\sfv$ are integrated, to contour $C_v$, as well as the contour on  which $w$ and $\sfw$ are integrated to contour $C_w$. Some care is required when deforming the $w,\sfw$ integrals. Let us look at it. For fixed values of $\btau$ and $\bx$, the only poles of the integrand (as a function of $v,w,\sfv,\sfw$) 
  are those due to the cross term $\sfC \sfT_{m,n}$ coming from the poles of the gamma functions $\Gamma(1+\gamma-\sfv_\ell-v_k)$ and 
  $\Gamma(1+\gamma-\sfw_\ell-w_k)$ when 
  \begin{align*}
  \gamma-\sfw_\ell-w_k = \big(\frac{\gamma}{2}-\sfw_\ell \big)+\big(\frac{\gamma}{2}-w_k \big) = -r-1, \qquad \text{for} \quad r=0,-1,-2,...
  \end{align*}
  and
  \begin{align*}
  \gamma-\sfv_\ell-v_k = \big(\frac{\gamma}{2}-\sfv_\ell \big)+\big(\frac{\gamma}{2}-v_k \big) = -r-1, \qquad \hskip 0.4cm \text{for} \quad r=0,-1,-2,...
  \end{align*}
  The deformation from $C_{\delta_1}$ to $C_v$ crosses no poles, as on both contours $\Re(\gamma-\sfv_\ell-v_k)\geq0$.
  
  Similarly, the deformation from  $\ell_{\delta}$ to $C_w$ 
  does not cross any poles, since on or on the left of the $C_w$ contour it holds that
  $\Re( \gamma-\sfw_\ell-w_k)  \geq -\gamma >-1$, for $\gamma$ sufficiently small.
   Finally, the deformation is justified by suitable decay bounds near infinity between the initial and final contours. 
    \vskip 2mm
  {\bf Step 2. (Localization around the critical point)} 
	Thanks to Lemma \ref{lem:BCR} and estimate \eqref{eq:Fub_blue}, the contribution to the contour integral away (e.g. outside a ball of radius $\gamma$ from the critical point $\gamma/2$) is exponentially negligible as $N$ tends to infinity. 
	 Then, again thanks to Lemma \ref{lem:BCR}, we can further localize the contour integrals in a window of size $N^{-1/3}$ around the critical point with a small cost $\epsilon$, uniformly in $N$.
	  We now do a  change of variables as in \eqref{variable_change} to rescale this window by $N^{1/3}$. By using the Taylor expansion around critical point \eqref{Gtaylor} and \eqref{Ftaylor}, 
	  we can easily  show that the contribution inside this ball is uniformly bounded by a constant, which will allow to pass in the limit in the quantities below.	 
  As $N\to\infty$, we have (for fixed $m,n$) that
  \begin{align*}
  	\prod_{k=1}^n\prod_{\ell=1}^m 
  	\frac{\Gamma\big (1-N^{-1/3}(\tilde\sfw_\ell+\tilde w_k)\big)\Gamma\big(1-N^{-1/3}(\tilde\sfv_\ell+\tilde v_k)\big)}{\Gamma\big(1-N^{-1/3}(\tilde\sfw_\ell+\tilde v_k\,)\big) \,\Gamma\big(1-N^{-1/3}(\tilde\sfv_\ell+\tilde w_k)\big)}\to 1\ ,
  \end{align*}
  and moreover,
  \begin{equation*} 
\frac{\pi N^{-1/3}(\tilde v_k-\tilde w_k)}{\sin\big( \pi N^{-1/3}(\tilde v_k-\tilde w_k)\big)}\to 1
\quad \text{and}\quad
\frac{\pi N^{-1/3}(\tilde \sfv_k-\tilde \sfw_k)}{\sin\big( \pi N^{-1/3}(\tilde \sfv_k-\tilde \sfw_k)\big)}\to 1.
  \end{equation*}
   Using the Taylor expansions \eqref{Gtaylor} and \eqref{Ftaylor} for $\tilde G$ and $\tilde F$, we have
   that in this limit and with the change of variables  \eqref{variable_change}, $\tilde A, \tilde B, \tilde C, \tilde D$ converge to $A^*, B^*, C^*, D^*$, respectively, while
    the diagonal parts of $C_v, C_w$ are replaced by contours
    $e^{-2\pi \iota/3}\mathbb{R}_{>0}\cup e^{+2\pi \iota/3}\mathbb{R}_{>0}$ and $\{e^{-\pi \iota/3}\mathbb{R}_{>0}+\delta\}\cup \{e^{+\pi \iota/3}\mathbb{R}_{>0}+\delta\}$, respectively.
  \end{proof}

\end{proposition}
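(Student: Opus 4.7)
The plan is to apply the steepest descent method to the contour integrals in formula \eqref{eq:mod_block}. The relevant critical point is $\gamma/2$, identified by $G'(\gamma/2) = G''(\gamma/2) = F'(\gamma/2) = 0$; the Taylor expansions \eqref{Gtaylor}--\eqref{Ftaylor} then show that the phases in $\tilde A, \tilde B, \tilde C, \tilde D$ rescale in the cubic/quadratic/linear manner required to yield exactly the phases of $A^*, B^*, C^*, D^*$ after the substitution \eqref{variable_change}. The task therefore reduces to (i) deforming each of the contours for $v, \sfv, w, \sfw$ to the steepest descent contours $C_v, C_w$ without crossing poles, (ii) localising these contours to a window of size $N^{-1/3}$ around $\gamma/2$ up to a negligible error, and (iii) performing the rescaling and applying dominated convergence.

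First I would carry out the contour deformation. For fixed $\btau, \bx$, the only singularities of the integrand (as a function of $v, w, \sfv, \sfw$) come from the cross term $\sfC\sfT_{m,n}$: the factors $\Gamma(1+\gamma - \sfv_\ell - v_k)$ and $\Gamma(1+\gamma - \sfw_\ell - w_k)$ have poles where the arguments are non-positive integers. Moving $v, \sfv$ from $C_{\delta_1}$ to $C_v$ and $w, \sfw$ from $\ell_\delta$ to $C_w$ keeps $\Re(\gamma - \sfv_\ell - v_k)$ and $\Re(\gamma - \sfw_\ell - w_k)$ strictly above $-1$ throughout the homotopy, provided $\gamma$ is taken sufficiently small; no pole is therefore crossed. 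Deformation at infinity is justified by the bound \eqref{eq:Fub_blue}.

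Next comes the localisation. Lemma \ref{lem:BCR}(3) gives $\Re[G(v) - G(\gamma/2)] \leq -c < 0$ on the circular arc of $C_v$, so the factor $e^{-N\tilde G(v)}$ contributes an $O(e^{-cN})$ remainder, which is negligible; Lemma \ref{lem:BCR}(2) provides an analogous estimate along the far part of $C_w$. On the straight-line segments close to $\gamma/2$, parts (1)--(2) of Lemma \ref{lem:BCR} give cubic bounds of the form $\Re[G(\cdot) - G(\gamma/2)] \leq -c\gamma^3(\cdot - \gamma/2)^3$ (with appropriate sign on each contour), which dominate the $N^{2/3}$-scale quadratic terms from $\tilde F$ and the $N^{1/3}$-scale linear terms, showing that the contribution from $|v - \gamma/2|, \ldots, |\sfw - \gamma/2| > \eta N^{-1/3}$ is negligible for $\eta$ large.

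Finally, on the localised window I would perform the change of variables \eqref{variable_change}. The sine ratios $\pi(v-w)/\sin(\pi(v-w))$ tend to $1$ since $v - w = O(N^{-1/3})$; the cross term $\sfC\sfT_{m,n}$ tends to $1$ factor by factor, since each $\Gamma(1 - N^{-1/3}(\cdot))$ converges to $\Gamma(1) = 1$; and each exponent of $\tilde A, \tilde B, \tilde C, \tilde D$ converges pointwise to the corresponding exponent of $A^*, B^*, C^*, D^*$ via \eqref{Gtaylor}--\eqref{Ftaylor}, with the $(x_j+\tau_i)(\gamma/2 - w_i)$-type terms becoming $-(\tilde x_j + \tilde \tau_i)\tilde w_i$ \emph{exactly} by the choice of scalings for $\bx$ and $\btau$. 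To upgrade pointwise convergence to convergence of the integral I would invoke dominated convergence: the cubic bounds of Lemma \ref{lem:BCR} give domination $|\tilde A|, \ldots, |\tilde D| \leq C\exp(-c|\tilde v|^3 - c|\tilde w|^3 - \cdots)$ on the rescaled contours, while the linear shifts in $\tilde x, \tilde \tau$ keep the required sign since $\Re(\gamma/2 - v) \geq 0$ on $C_v$ and $\Re(\gamma/2 - w) \leq 0$ on $C_w$, so the $\bx, \btau$ integrals are absolutely convergent after applying Fubini block by block. The main technical hurdle is this last step: one must check carefully that the local estimates near $\gamma/2$ (valid for $|\tilde v|, |\tilde w| \lesssim N^{1/3}\gamma$) match up with the far-away exponential decay in a way that produces a single dominating function integrable uniformly in $N$, and that the cross term $\sfC\sfT_{m,n}$, although harmless for fixed $m, n$, is bounded on the deformed contours by a constant depending on $m, n$ but independent of $N$.
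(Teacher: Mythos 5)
Your proposal follows essentially the same route as the paper's proof: a steepest descent argument with the same contour deformation to $C_v, C_w$ (checking that the only poles, coming from the cross term $\sfC\sfT_{m,n}$, are not crossed), localization to an $N^{-1/3}$ window around $\gamma/2$ via Lemma \ref{lem:BCR} and the bound \eqref{eq:Fub_blue}, followed by the rescaling \eqref{variable_change} and the pointwise limits of the sine ratios and the cross term. The additional care you take with the dominating function and the $N$-uniform bound on the cross term for fixed $m,n$ is consistent with, and slightly more explicit than, what the paper records.
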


By the multilinearity of the determinant, it follows that $I_{m,n}$ can be written as 
  \begin{equation}\label{for:expansion}
  I_{m,n} =
  	 \frac{(-1)^{n+m}}{n! m!(2\pi\iota)^{m+n}} 
  	\int_{\bbR_{>0}^{n+m}} \dd \btau \det \left(
  	\begin{array}{cc}
  		\big(A'(\tau_i,\tau_j)\big)_{n\times n} & \big(B'(\tau_i, \tau_{j+n})\big)_{n\times m}\\
		&\\
  		\big(C'(\tau_{i+n}, \tau_j)\big)_{m\times n} & \big(D'(\tau_{i+n}, \tau_{j+n})\big)_{m\times m} 
  	\end{array}
  	\right)\ ,
  \end{equation} 
  where
  \begin{align*}
  	A'(\tau_i,\tau_j)    &=\int \dd \tilde v \dd \tilde  w 
  	\int_{\bbR_{>0}} \dd\tilde x \  A^*(\tilde x,\tilde\tau; \tilde{ w},\tilde v)_{ij},\\
  	B'(\tau_i, \tau_{j+n}) &=\int \dd \tilde{ w} \dd\tilde\sfw
  	\int_{\bbR_{>0}} \dd\tilde x  \ B^*(\tilde x,\tilde\tau; \tilde{ w},\tilde \sfw)_{ij},\\
  	C'(\tau_{i+n}, \tau_j)  &=\int \dd \tilde{ v}  \dd \tilde\sfv 
  	\int_{\bbR_{>0}} \dd \tilde x  \ C^*(\tilde x,\tilde \tau; \tilde{v}, \tilde\sfv)_{ij},\\
  	D'(\tau_{i+n}, \tau_{j+n})  &= \int \dd \tilde\sfv \dd\tilde\sfw
  	\int_{\bbR_{>0}} \dd \tilde x \ D^*(\tilde x,\tilde\tau; \tilde\sfw, \tilde\sfv)_{ij} .
  \end{align*}
   Let us, now, simplify the block determinant by doing the following change of variables
  \begin{align*}
  	\tilde{\bf v}&\mapsto \Big(-\frac{2}{G'''(\gamma/2)}\Big)^{1/3} \tilde{\bf v} -t_2 \frac{F''(\gamma/2)}{G'''(\gamma/2)}, 
	\quad\quad \tilde{\bf w}\mapsto \Big(-\frac{2}{G'''(\gamma/2)}\Big)^{1/3} \tilde{\bf w} -t_2 \frac{ F''(\gamma/2)}{G'''(\gamma/2)},\\
  	\tilde{\sfv}&\mapsto \Big(-\frac{2}{G'''(\gamma/2)}\Big)^{1/3}\tilde{\sfv} -t_1\frac{F''(\gamma/2)}{G'''(\gamma/2)},
	\quad\quad \tilde{\sfw}\mapsto \Big(-\frac{2}{G'''(\gamma/2)}\Big)^{1/3}\tilde{\sfw} -t_1\frac{F''(\gamma/2)}{G(\gamma/2)},\\
  	\tilde \bx&\mapsto \Big(-\frac{G'''(\gamma/2)}{2}\Big)^{1/3} \tilde \bx,
	 \qquad\qquad\quad\quad \qquad\tilde \btau\mapsto \Big(-\frac{G'''(\gamma/2)}{2}\Big)^{1/3} \tilde \btau \ .
  \end{align*}
  Define  $c_1^{\gamma}=(-\frac{G'''(\gamma/2)}{2})^{-1/3} $,  $c_2^{\gamma}=-\frac{c_1^{\gamma} (F''(\gamma/2))^2}{2 G'''(\gamma/2)}$ and $c_3^\gamma=-\frac{F''(\gamma/2)}{G'''(\gamma/2)}$.
   By a long but fairly straightforward computation using the above change of variables and the integral representation of the Airy function \eqref{def:Airy}, we obtain that
  \begin{align*}
	A'(\tau_i,\tau_j)    &= \int_{0}^{\infty} \dd x \,  Ai(c_1^{\gamma}r_2+c_2^{\gamma}t_2^2+x+\tau_i)
   									    \,Ai(c_1^{\gamma}r_2+c_2^{\gamma}t_2^2+ x+\tau_j)\ ,\\
  	B'(\tau_i, \tau_{j+n}) &=-\int_{0}^{\infty} \dd x \, e^{- c_3^{\gamma} x (t_2+t_1)}Ai(c_1^{\gamma}r_1+c_2^{\gamma}t_1^2- x+\tau_i)Ai(c_1^{\gamma}r_2+c_2^{\gamma}t_2^2- x+\tau_{j+n})\ ,\\
  	C'(\tau_{i+n}, \tau_j)  &=\int_{0}^{\infty} \dd x  \,e^{-c_3^{\gamma} x (t_2+t_1)} \, Ai(c_1^{\gamma}r_1+c_2^{\gamma}t_1^2+ x+\tau_{i+n})
															      \, Ai(c_1^{\gamma}r_2+c_2^{\gamma}t_2^2+ x+\tau_{j})\ ,\\
  	D'(\tau_{i+n}, \tau_{j+n})  &= \int_{0}^{\infty} \dd x \, Ai(c_1^{\gamma}r_1+c_2^{\gamma}t_1^2+x+\tau_{i+n})
										   \,  Ai(c_1^{\gamma}r_1+c_2^{\gamma}t_1^2+ x+\tau_{j+n} )\ .
  \end{align*}
The only tricky part of the computation that leads to the above expression is to take care of some
 multiplicative factors, which are independent of the variables $\tilde{\bf w},\tilde{\bf v},\tilde \sfw, \tilde \sfv, \tilde{\bx} $ and appear in the entries of the blocks 
$A'(\tau_i,\tau_j), B'(\tau_i, \tau_{j+n}), C'(\tau_{i+n}, \tau_j), D'(\tau_{i+n}, \tau_{j+n}) $. Denoting these terms by $\exp (\mathcal{C}_{ij})$, it turns out that $\mathcal{C}_{ij}$
 are equal to
\begin{align*}
&\ind_{\{1\leq i \leq n\}}
\left\{-\left( r_2-t_2^2\frac{(F''(\gamma/2))^2}{3G'''(\gamma/2)} \right) \frac{F''(\gamma/2)}{G'''(\gamma/2)} t_2 -\tau_i \frac{F''(\gamma/2)}{G'''(\gamma/2)} t_2
\right\}\\
&\\
-&\,\ind_{\{n+1\leq i \leq n+m\}}
\left\{-\left( r_1-t_1^2\frac{(F''(\gamma/2))^2}{3G'''(\gamma/2)} \right) \frac{F''(\gamma/2)}{G'''(\gamma/2)} t_1 -\tau_i \frac{F''(\gamma/2)}{G'''(\gamma/2)} t_1
\right\}\\
&\\
-&\,\ind_{\{1\leq j \leq n\}}
\left\{-\left( r_2-t_2^2\frac{(F''(\gamma/2))^2}{3G'''(\gamma/2)} \right) \frac{F''(\gamma/2)}{G'''(\gamma/2)} t_2 -\tau_j \frac{F''(\gamma/2)}{G'''(\gamma/2)} t_2
\right\}\\
&\\
+&\,\ind_{\{n+1\leq j \leq n+m\}}
\left\{-\left( r_1-t_1^2\frac{(F''(\gamma/2))^2}{3G'''(\gamma/2)} \right) \frac{F''(\gamma/2)}{G'''(\gamma/2)} t_1 -\tau_j \frac{F''(\gamma/2)}{G'''(\gamma/2)} t_1
\right\}.
\end{align*}
These terms can be pulled out of the block determinant, by multi linearity, and they then cancel out.
    	  	  
		  An immediate change of variables brings us to expression \eqref{2point_road}, thus bringing us to the end of the road map.
		  \vskip 2mm
		  {\bf Acknowledgement.} 
		  We thank Guillaume Barraquand for useful discussions and Ivan Corwin for comments on the draft. 
		  We are also grateful to the anonymous referee for a very careful reading and comments which improved the exposition of the article.
		  NZ acknowledges support from EPRSC through grant EP/L012154/1.

\end{document}